\DeclareMathOperator{\essosc}{ess\,osc}
\def\Xint#1{\mathchoice
{\XXint\displaystyle\textstyle{#1}}%
{\XXint\textstyle\scriptstyle{#1}}%
{\XXint\scriptstyle\scriptscriptstyle{#1}}%
{\XXint\scriptscriptstyle\scriptscriptstyle{#1}}%
\!\int}
\def\XXint#1#2#3{{\setbox0=\hbox{$#1{#2#3}{\int}$ }
\vcenter{\hbox{$#2#3$ }}\kern-.6\wd0}}
\def\dashint{\Xint-}
\newtheorem{theorem}{Theorem}
\numberwithin{equation}{section}
\numberwithin{theorem}{section}
\newtheorem{lemma}[theorem]{Lemma}
\newtheorem{cor}[theorem]{Corollary}
\newtheorem{remark}[theorem]{Remark}
\newcommand{\e}{\epsilon}
\numberwithin{equation}{section}
\newcommand{\mb}{\mathbb}
\newcommand{\mc}{\mathcal}
\def\Re{\mathbb{R}}
\def\Int{\mathbb{Z}}
\def\Rat{\mathbb{Q}}
\title{\textsf{Homogenization for Rigid Suspensions with Random Velocity-Dependent Interfacial Forces}}
\author{Yuliya Gorb\thanks{
Department of Mathematics, University of Houston, Houston, TX,
77204, gorb@math.uh.edu}, Florian Maris\thanks{corresponding author,
Department of Mathematics, University of Houston, Houston, TX,
77204, fmaris@math.uh.edu}, Bogdan Vernescu\thanks{Department of Mathematical Sciences, Worcester Polytechnic Institute, 100 Institute Rd., Worcester, MA 01609, vernescu@wpi.edu}}
\date{\today}
\begin{document}

\maketitle \thispagestyle{empty}

 \begin{abstract}
\noindent We study suspensions of solid particles in a viscous incompressible fluid in the presence of highly oscillatory velocity-dependent surface forces. The flow at a small Reynolds number is modeled by the Stokes equations coupled with the motion of rigid particles arranged in a periodic array. The objective is to perform homogenization for the given suspension and obtain an equivalent description of a homogeneous (effective) medium, the macroscopic effect of the interfacial forces and the effective viscosity are determined using the analysis on a periodicity cell. In particular, the solutions $\bm{u}^\e_\omega$ to a family of problems corresponding to the size of microstructure $\e$ and describing suspensions of rigid particles with random surface forces imposed on the interface, converge $H^1$-- weakly as $\e \to 0$ a.s. to a solution of the so-called homogenized problem with constant coefficients. It is also shown that there is a corrector to a homogenized solution that yields a strong $H^1$-- convergence. 
The main technical construct is built upon the $\Gamma$-- convergence theory.

\end{abstract}

{\bf Key words:}
effective viscosity, 
velocity-dependent forces, homogenization, Stokes equation

\section{Introduction}
Flows of incompressible fluids that carry rigid particles
also known as {\it particulate flows} are essential parts of many engineering
and environmental processes (i.e. particle sedimentation, fluidization and conveying) and 
are commonly encountered in many applications and fundamental fluid mechanics.
The complexity of mechanisms that govern fluid-particle and particle-particle
interactions makes the numerical simulation of these flows be one of the most challenging problems in computational
fluid dynamics. Many analytical, numerical, and experimental studies have been performed
during the past decades, however still much more research is needed for the fundamental understanding of these complex heterogeneous media.

For instance, the {\it rheological behavior} of particulate flows have been extensively studied
for over a century. Up to date the most investigated concentration regime of particles is the dilute one in which the hydrodynamic interactions between inclusions are normally ignored as the 
interparticle distance 
exceeds the range of the flows that appear due to the particle motion. Hence, it is possible in this case to
isolate the effect of particle-fluid interactions on the effective behavior of suspensions, as was done in the pioneering work
by Einstein \cite{einstein0} where the asymptotic study of the effective 
viscosity $\mu^*$ of a suspension of rigid particles in a small concentration regime was carried out.
A simple approximation to the 
viscosity $\mu^*$ in volume fraction $\phi$ of rigid neutrally buoyant spheres in the suspension had been formally derived there (see also \cite{einstein}):
\begin{equation} \label{E:einstein}
\mu^*\simeq\mu\left(1+\frac{5}{2}\phi\right) \quad \mbox{ as } \quad \phi\to 0~,
\end{equation}
where $\mu$ is the viscosity of the ambient fluid. The subsequent extension to the ellipsoidal particles was done in \cite{jeffery}.
Despite the seemingly simple linear relation \eqref{E:einstein} between $\mu^*$ and $\phi$, the rigorous justification
of the Einstein's formula \eqref{E:einstein} has been carried out over a century later,
in \cite{haines-maz} by constructing explicit upper and lower
bounds on the effective viscosity $\mu^*$.

A pairwise particle interaction in the dilute regime has been first taken into account in \cite{bat-green} to formally derive a
$O(\phi^2)$--correction to the Einstein's formula. Such an $O(\phi^2)$--correction in the case of a
periodic suspension of spherical fluid drops of viscosity $\eta\to\infty$ in a fluid of
viscosity $\mu$ was rigorously proven, also very recently, in \cite{ammari} using techniques different from \cite{haines-maz} based on the layer-potential approach previously developed by the authors.
For a more detailed description of asymptotic studies for the effective viscosity of dilute
suspensions we refer a reader to \cite{ammari} and references therein.

Another ``extreme'' regime of particle concentration is the dense packing of particles in a suspension 
when the typical interparticle distances are much smaller than their sizes. In such a regime
the effective rheological properties of suspensions exhibit singular behavior as the
characteristic distance between particles tends to zero (or equivalently, as $\phi$ tends to the maximal packing volume).
Such densely-packed particulate flows were also extensively studied both numerically and analytically
(see e.g. \cite{fa67,graham,nptf00,sb01} and references therein) over the past decades. For example, in \cite{fa67} a local formal asymptotic analysis based on a pair of closely spaced particles showed that
the effective viscosity, described by the viscous dissipation rate, exhibits a blow up of order $O(\delta^{-1})$,
where $\delta$ is the distance between the neighbors, whereas numerical study of \cite{sb01} revealed that
in some cases the blow up might be much weaker, e.g. of $O(|\ln \delta|)$. Such a discrepancy comes from the fact that
in the high packing regime the dynamics of particulate flows is driven by the long-range interactions between particles
and local asymptotics is not sufficient here, therefore
analysis of this concentration regime is quite challenging.
Luckily, the development of discrete network approximation techniques of
 \cite{bbp05,bgn05,bgn09}
 allowed to settle the disagreement between the results of \cite{fa67} and \cite{sb01}. Namely, it was shown that there are multiple blow ups of the effective viscosity as $\delta\to 0$ demonstrating examples of the cases
 when the stronger blow up degenerates so the weaker blow up becomes the leading one (see, in particular, \cite{bp07,bgn09}).

The third concentration regime when particles are of the same order as their
sizes is called the finite or moderate concentration regime.
Unlike in the regimes mentioned above, where the extreme properties in some sense
facilitated the corresponding analyses (i.e. negligible interactions between particles
in the dilute limit and the presence of strong lubrication forces between closely spaced particles
that contributed to the blow up of the effective viscosity in highly concentrated suspensions),
the case of finite 
concentrations is much harder to analyze. 
Such a regime was treated in \cite{nunan-keller} where a periodic array of spherical
particles was considered. Under the assumption
that all inclusions follow the shear motion of the fluid (formula (5) in \cite{nunan-keller})
it was shown that $\mu^*=O(\epsilon^{-1})$, where $\epsilon$ is the distance between periodically
distributed particles. This assumption is analogous to the well-known
Cauchy-Born hypothesis in solid state physics, which is known to be
not always true \cite{ft02}.
Also, a finite concentration regime was considered in
\cite{levy-sp} where an asymptotic expansion of the
effective viscosity was constructed assuming a periodic distribution of particles of arbitrary shape.
The formal two-scale homogenization was carried out there under the assumption that
the number of particles increases to infinity as their total volume remains constant.
We distinguish this work \cite{levy-sp} as being the closest in its goals
to the current paper as stated below.

We also point out that the definition of the effective properties of suspensions that was 
employed in \cite{levy-sp,nunan-keller}
is different from the aforementioned treatments of the dilute limit and dense packing regimes, where
the effective properties were determined from the equality of the viscous dissipation rates in the suspension and
the effective fluid. The latter definition of the effective rheological behavior is directly related to viscometric measurements
that necessarily include boundary conditions and applied forces.

\vspace{5pt}

The current paper focuses on particulate flows consisting
of a viscous incompressible fluid that carries rigid neutrally buoyant particles whose size
is comparable to the typical interparticle distances. 

When the surface forces are negligible, the effective behavior of suspensions is well understood. In
\cite{Bat70,KelRubMol67,HinLea75,levy-sp,BunVer} 
it was shown that suspensions behave like viscous fluids with a modified viscosity. 
This is also the case of emulsions studied formally in \cite{KelRubMol67} and in the framework of two-scale convergence on \cite{LipVer94}. For these systems surface tension effects are present in the effective viscosity. 

By contrast, other suspensions exhibit a different non-newtonian constitutive behavior.
In colloidal suspensions the interparticle forces, including van der Waals, electrostatic, steric,
and depletion forces, have an important role in the colloidal stability and in the suspension's
rheological behavior. In electrorheological fluids, surface electric forces, change the viscous
behavior in a viscoplastic or Bingham rigid-plastic behavior. In other materials electrokinetic
phenomena and the interaction with ionized particles play an important role, as in the transport
through natural clays, in the electrophysiology of cartilages and bones, in semiconductor
transport or in membrane or bulk ion-exchangers.

In all these examples, an important problem is the better understanding of the influence
of the highly oscillating surface forces in the constitutive behavior of the suspension. The
reversible constitutive change in electrorheological fluids, the swelling in clays and the lubricating properties
of connective tissues cannot be explained without taking into account interfacial phenomena
and field interactions.

The main goal of this paper is to derive and justify homogenization-type results of suspensions of rigid particles in the presence of highly oscillating random surface forces, dependent on the velocity field, and the microstructure in the limit of $\epsilon\to 0$,
where $\epsilon$ is the size of that microstructure. 
Here, we consider only noncolloidal suspensions in which hydrodynamic interactions are much stronger than the Brownian motions, 
hence, the latter are neglected. 

The relatively young homogenization theory specifically designed for analysis of highly
heterogeneous and microstructured materials (see \cite{bp89,blp78,jko94,sp80}) has been
an active area of research in recent decades. In this theory the effective material properties
of periodic materials are defined based on analysis on a periodicity cell, and then these properties
depend on the mechanics of constituents and geometry of the periodic array of particles but independent
of the external boundary conditions and applied forces. They are normally determined in the limit as the size of the microstructure $\e\to 0$, or equivalently, as the number of particles goes to infinity. Such an analysis is carried out in the present paper. 
In particular, the solutions $\bm{u}^\e_\omega$ to a family of problems corresponding to the size of microstructure $\e$ and describing suspensions of rigid particles with random surface forces imposed on the interface, converge $H^1$-- weakly as $\e \to 0$ a.s. to a solution of the so-called homogenized problem with constant coefficients. Convergence of the corresponding functionals describing the respecting problems is also shown. We also demonstrate that there is a corrector to the homogenized solution which yields a strong $H^1$-- convergence. 
 
Despite the fact that assumption about periodicity of a particle array is too restrictive for
suspensions of {\it moving} particles it has been extensively used in analysis of particulate flows (see e.g. \cite{fa67,graham,nunan-keller,levy-sp}). Since we attempt to develop
the homogenization theory for suspensions of rigid particles the periodicity assumption is imperative in our construct.
Also, we point out that in the particular case when the highly oscillatory random forces on the
boundaries of particles are absent we recover the formal asymptotics of \cite{levy-sp}.
The mathematical tools, that the main justification techniques are built upon, are based on $\Gamma$-- convergence theory \cite{dalint93}.

The paper is organized as follows.
In {\bf Section \ref{sec2}} we present fluid-particle problem setting including a
description of 
interface forces and the statement of existence and uniqueness of the solutions etc. The homogenization problem is
formulated in {\bf Section \ref{sec3}} and main results including cell problems, effective viscosity
derivation, convergence and corrector results are given in {\bf Section \ref{sec4}}. Conclusions are presented
in {\bf Section \ref{sec5}}. Proofs of auxiliary facts including existence and uniqueness of
solutions are given in {\bf Appendices}.

\vspace{5pt}

\noindent {\bf Acknowledgments.} Y. Gorb and F. Maris were supported by the NSF grant DMS-1016531. 

\section{Problem Formulation for the Particulate Flow}
\label{sec2}

Let $D$ be a bounded domain in $\Re^d$ with Lipschitz boundary. We assume that $D$ is split into two parts, a part $D_f$ in which we consider a viscous, incompressible fluid of viscosity $\mu$, and a part $D_r\subset\subset D$ that consists of a finite union of disjoint rigid particles, $D_r=\bigcup_{l=1}^N D_r^l$. Each rigid particle $D_r^l$ is assumed to be an open connected set with Lipschitz boundary, compactly included in $D$. In the following, by $\bm{n}$ we denote the unit normal on the boundary of $D_f$, directed outside the fluid region.

The following notations are also to be used:
$$\bm{a}\cdot\bm{b}=\sum_{i=1}^d a_ib_i,\ \ \ \mbox{ for all }\bm{a},\bm{b}\in\Re^d$$
$$\bm{A} :\bm{B}=\sum_{i,j=1}^d A_{ij}B_{ij},\ \ \ \mbox{ for all }\bm{A},\bm{B}\in\Re^{d\times d}$$
$$(\bm{a}\times\bm{b})_{ij}= a_ib_j-a_jb_i,\ \ \ \mbox{ for all }\bm{a},\bm{b}\in\Re^d, \ \ \mbox{ so }\bm{a}\times\bm{b}\in\Re^{d\times d}.$$

In the fluid region $D_f$ of the domain $D$, we introduce the velocity field $\bm{u}$ of the fluid and its pressure $p$. The strain rate tensor is denoted by $\bm{e}(\bm{u})$, defined componentwise by
\begin{equation}
\label{def.e}
e_{ij}(\bm{u})=\frac{1}{2}\left(\frac{\partial u_j}{\partial x_i}+\frac{\partial u_i}{\partial x_j}\right),\ \mbox{ for }0\leq i,j\leq d,
\end{equation}
and the stress tensor $\bm{\sigma}$ defined by
\begin{equation}
\label{def.sigma}
\bm{\sigma}=\bm{\sigma}(\bm{u},p)=-p\bm{I}+2\mu\bm{e}(\bm{u}).
\end{equation}
In $D_f$, the motion of the fluid is described by the stationary Stokes equation for an incompressible fluid 
\begin{equation}
\label{eqD_f}
\left\{
\begin{array}{rll}
-\operatorname{div}\bm{\sigma} = &\bm{f} \ &\mbox{in} \ D_f, \\
\operatorname{div}\bm{u}= &0 &\mbox{in}\ D_f, \\
\end{array}
\right.
\end{equation}
where $\bm{f}$ represents the body forces. On each rigid particle $D_r^l$, the velocity field $\bm{u}$ satisfies a rigid body motion
\begin{equation}
\label{eqD_r^l}
\bm{e}(\bm{u})=\bm{0} \ \mbox{ in }\ D_r^l, \ \mbox{for each } 0\leq l\leq N.
\end{equation}

We assume that for each $0\leq l\leq N$ there exists the superficial force $\bm{f}_l=\bm{f}_l(\bm{x},\bm{u})$ acting on the boundary of $D_r^l$ that depends on the position $\bm{x}$ as well as on the velocity field
$\bm{u}$, and we impose the balance of forces on every $D_r^l$ by

\begin{equation}
\label{balance.f}
-\int_{\partial D_r^l}\bm{\sigma}\bm{n}ds+\int_{D_r^l}\bm{f} d\bm{x}+\int_{\partial D_r^l}\bm{f}_l (\bm{x},\bm{u})ds=\bm{0},
\end{equation}
as well as the balance of torques
\begin{equation}
\label{balance.t}
-\int_{\partial D_r^l}\bm{\sigma}\bm{n}\times (\bm{x}-\bm{x}_l)ds+\int_{D_r^l}\bm{f}\times (\bm{x}-\bm{x}_l) d\bm{x}+\int_{\partial D_r^l}\bm{f}_l(\bm{x},\bm{u}) \times (\bm{x}
-\bm{x}_l)ds=\bm{0},
\end{equation}
where $\bm{x}_l$ is the center of mass of $D_r^l$. 
Equations (\ref{eqD_f}) -- (\ref{balance.t}) are supplied with boundary conditions on $\partial D$ that for simplicity are assumed to be no-slip conditions:
\begin{equation}
\label{noslip1}
\bm{u}=\bm{0} \ \mbox{ on }\ \partial D.
\end{equation}
The boundary conditions on $D_r^l$, for each $0\leq l\leq N$, also are chosen no-slip, thus the continuity of the velocity field across $\partial D_r^l$ is imposed:
\begin{equation}
\label{noslip2}
\llbracket\bm{u} \rrbracket=\bm{0} \ \mbox{ on }\ \partial D_r^l, \ \mbox{ for each }\ 0\leq l\leq N,
\end{equation}
where by $\llbracket\bm{u} \rrbracket=\bm{u}_f-\bm{u}_r$ we understand the jump of the velocity field $\bm{u}$, with $\bm{u}_f$ being the velocity of the fluid on the boundary of $D_r$ and $\bm{u}_r$ the velocity of the rigid part
on the boundary of $D_r$.

Collecting equations (\ref{eqD_f}) -- (\ref{noslip2}), we derive the system that we intend to study:
\begin{equation}
\label{system}
\left\{
\begin{array}{rll}
-\operatorname{div}\bm{\sigma} &=\bm{f} \ \mbox{in} \ D_f, \\
\operatorname{div}\bm{u}&=0 \ \mbox{in}\ D_f, \\
\bm{e}(\bm{u})&=\bm{0} \ \mbox{in}\ D_r, \\
\llbracket\bm{u} \rrbracket&=\bm{0} \ \mbox{on}\ \partial D_r, \\
\displaystyle\int_{\partial D_r^l}\bm{\sigma}\bm{n}ds&=\displaystyle\int_{D_r^l}\bm{f} d\bm{x}+\int_{\partial D_r^l}\bm{f}_l(\bm{x},\bm{u}) ds, \mbox{ for }0\leq l\leq N,\\
\displaystyle\int_{\partial D_r^l}\bm{\sigma}\bm{n}\times (\bm{x}-\bm{x}_l)ds&=\displaystyle\int_{D_r^l}\bm{f}\times (\bm{x}-\bm{x}_l) d\bm{x}\\
&+\displaystyle\int_{\partial D_r^l}\bm{f}_l(\bm{x},\bm{u}) \times (\bm{x}-\bm{x}_l)ds, \mbox{ for }0\leq l\leq N,\\
\bm{u}&=\bm{0} \ \mbox{ on }\ \partial D.
\end{array}
\right.
\end{equation}

For the system (\ref{system}), according to the no-slip boundary conditions imposed on $\partial D$ and on $\partial D_r$ we look for the solution $\bm{u}\in H_0^1(D)^d$ satisfying $\operatorname{div}\bm{u}=0$ in $D_f$
and $\bm{e}(\bm{u})=\bm{0}$ in $D_r$. Since the velocity field $\bm{u}$ inside each rigid particle satisfies both conditions $\operatorname{div}\bm{u}=0$ and $\bm{e}(\bm{u})=\bm{0}$, the solution $\bm{u}$ is a divergence free vector field from $H_0^1(D)^d$. To that end, we introduce the following subspaces of $H^1(D)^d$
\begin{equation}
\label{space.div}
V=\{\bm{v}\in H_0^1(D)^d\ |\ \operatorname{div}\bm{v}=0\},
\end{equation}
and
\begin{equation}
\label{space.u}
V_r=\{\bm{v}\in V\ |\ \bm{e}(\bm{v})=\bm{0} \mbox{ in } D_r\}.
\end{equation}

Therefore, we look for the solution $\bm{u}\in V_r$ and $p\in L^2(D_f)$ satisfying the stationary Stokes equation (\ref{eqD_f}) in $D_f$ in the weak sense and the equations (\ref{balance.f}) and (\ref{balance.t}) representing the
balance of forces and torques.

We assume that $\bm{f}\in L^2(D)^d$ and for every $0\leq l\leq N$ there exists a function $g_l(s,\bm{z})$, $g_l:\partial D_r^l\times\Re^d\to \Re$ such that for all $s\in\partial D_r^l$ and all $\bm{z}\in\Re^d$
\begin{equation}
\label{deff^l}
\bm{f}_l(s,\bm{z})=-\bm{\nabla}_{\bm{z}} g_l(s,\bm{z}),
\end{equation}
where by $\bm{\nabla}_{\bm{z}}$ we denote the gradient with respect to $\bm{z}$, and moreover $g_l:\partial D_r^l\times\Re^d\to \Re$ satisfies the following properties:

\vspace{3mm}
(G1) for every fixed $\bm{z}\in\Re^d$ the function $s\mapsto g_l(s,\bm{z})$ is measurable with respect to the surface measure on $\partial D_r^l$,

\vspace{3mm}
(G2) for every fixed $s\in\partial D_r^l$ the function $\bm{z}\mapsto g_l(s,\bm{z})$ is convex and Fr\'{e}chet differentiable at every point $\bm{z}\in\Re^d$, and the Fr\'{e}chet differential is continuous on
$\Re^d$,

\vspace{3mm}
(G3) $s\mapsto g_l(s,\bm{0})$ belongs to $L^1 (\partial D_r^l)$,

\vspace{3mm}
(G4) there exists $\gamma\in (0,1]$ and a function $a_l\in L^{\frac{2}{2-\gamma}}(\partial D_r^l)$, such that
\begin{equation}
\label{gl4}
g_l(s,\bm{z}_1)-g_l(s,\bm{z}_2)\leq C a_l(s)|\bm{z}_1-\bm{z}_2|^\gamma,
\end{equation}
for almost every $s\in\partial D_r^l$ and all $\bm{z}_1,\bm{z}_2\in\Re^d.$

\begin{remark}
\label{remark1}
The function $g_l:\partial D_r^l\times\Re^d\to \Re$, using (\ref{gl4}) satisfies
$$|g_l(s,\bm{z})|\leq |g_l(s,\bm{z})-g_l(s,\bm{0})| + |g_l(s,\bm{0})|\leq C a_l(s)|\bm{z}|^\gamma+ |g_l(s,\bm{0})|$$
which, after using H\"{o}lder's inequality becomes
$$|g_l(s,\bm{z})|\leq |g_l(s,\bm{0})|+ C\frac{2-\gamma}{2} a_l(s)^{\frac{2}{2-\gamma}}+C \frac{\gamma}{2}|\bm{z}|^2$$
so we have the following estimate
$$|g_l(s,\bm{z})|\leq b_l(s)+C|\bm{z}|^2,$$
for almost every $s\in\partial D_r^l$ and all $\bm{z}\in\Re^d$, where $b_l\in L^1(\partial D_r^l)$.
\end{remark}

\begin{remark}
\label{remark2}
The function $g_l:\partial D_r^l\times\Re^d\to \Re$ is a Carath\'{e}odory function, thus, for every vector field $\bm{u}\in H^1(D)^d$, the function,
$\bm{x}\mapsto g_l(\bm{x},\bm{u}(\bm{x}))$ is well defined on $\partial D_r^l$ and measurable with respect to the surface measure. Also, using {\bf Remark \ref{remark1}} the function $\bm{x}\mapsto g_l(\bm{x},\bm{u}(\bm{x}))$ is in 
$L^1(\partial D_r^l)$.
\end{remark}

\begin{remark}
\label{remark3}
From the convexity of $\bm{z}\mapsto g_l(s,\bm{z})$ we have the following subdifferential type inequality:
$$g_l(s,\bm{z}+\bm{w})\geq g_l(s,\bm{z})+\bm{w}\cdot\bm{\nabla}_{\bm{z}} g_l(s,\bm{z})$$
for almost every $s\in\partial D_r^l$ and all $\bm{z},\bm{w}\in\Re^d$, which implies that
$$|\bm{w}\cdot\bm{\nabla}_{\bm{z}} g_l(s,\bm{z})|\leq 2a_l(s)+C(|\bm{z}|^2+|\bm{w}|^2)$$
so taking all $\bm{w}$ such that $|\bm{w}|\leq |\bm{z}|+1$ we obtain that
$$|\bm{\nabla}_{\bm{z}} g_l(s,\bm{z})|\leq c_l(s)+C|\bm{z}|,$$
for almost every $s\in\partial D_r^l$ and all $\bm{z}\in\Re^d$, where $c_l\in L^2(\partial D_r^l)$. Also, in the same way as in {\bf Remark \ref{remark2}}, for every vector field $\bm{u}\in H^1(D)^d$,
the function $\bm{x}\mapsto\bm{\nabla}_{\bm{x}} g_l(\bm{x},\bm{u}(\bm{x}))$ is measurable and is in $L^2(\partial D_r^l)^d$.
\end{remark}
Next we give an equivalent variational formulation for the system (\ref{system}) and state the existence and uniqueness of a weak solution.

\begin{theorem}
\label{thvarfor}
The pair $\{\bm{u},p\}$, with $\bm{u}\in V_r$ and $p\in L^2(D_f)$ is a weak solution for the system (\ref{system}) if and only if for every $\bm{\phi}\in H_0^1(D)^d$ such that
$\bm{e}(\bm{\phi})=\bm{0}$ in $D_r$ one has
\begin{equation}
\label{eqvarfor}
\int_D 2\mu\bm{e}(\bm{u}):\bm{e}(\bm{\phi})d\bm{x}-\int_{D_f} p \operatorname{div}\bm{\phi}d\bm{x}=\int_D\bm{f}\cdot\bm{\phi}d\bm{x}+
\sum_{l=1}^{N}\int_{\partial D_r^l}\bm{f}_l(\bm{x},\bm{u}) \cdot\bm{\phi}ds.
\end{equation}
\end{theorem}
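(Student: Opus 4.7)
The plan is to prove the two directions of the equivalence separately. For the forward direction I would test the Stokes equation against an admissible $\bm{\phi}$, integrate by parts, and use the particle force and torque balances to absorb the resulting boundary terms on $\partial D_r$. For the reverse direction I would recover each of the equations in (\ref{system}) by specialising the test function.

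Fix $\bm{\phi}\in H_0^1(D)^d$ with $\bm{e}(\bm{\phi})=\bm{0}$ in $D_r$. By the structure of rigid body motions, on each $D_r^l$ one can write $\bm{\phi}(\bm{x})=\bm{V}_l+\bm{\Omega}_l(\bm{x}-\bm{x}_l)$ with $\bm{V}_l\in\Re^d$ constant and $\bm{\Omega}_l$ a constant antisymmetric matrix. Multiply the Stokes equation by $\bm{\phi}$ on $D_f$ and use Green's formula; since $\bm{\phi}=\bm{0}$ on $\partial D$, only boundary terms on $\partial D_r$ survive, and the identity $\bm{\sigma}:\bm{\nabla}\bm{\phi}=-p\operatorname{div}\bm{\phi}+2\mu\bm{e}(\bm{u}):\bm{e}(\bm{\phi})$ together with $\bm{e}(\bm{\phi})=\bm{0}$ in $D_r$ yields
$$\int_D 2\mu\bm{e}(\bm{u}):\bm{e}(\bm{\phi})\,d\bm{x}-\int_{D_f}p\operatorname{div}\bm{\phi}\,d\bm{x}=\int_{D_f}\bm{f}\cdot\bm{\phi}\,d\bm{x}+\sum_{l=1}^{N}\int_{\partial D_r^l}\bm{\sigma}\bm{n}\cdot\bm{\phi}\,ds.$$
On each $D_r^l$, the antisymmetry of $\bm{\Omega}_l$ combined with the paper's convention $(\bm{a}\times\bm{b})_{ij}=a_ib_j-a_jb_i$ gives the decomposition
$$\int_{\partial D_r^l}\bm{\sigma}\bm{n}\cdot\bm{\phi}\,ds=\bm{V}_l\cdot\int_{\partial D_r^l}\bm{\sigma}\bm{n}\,ds+\tfrac{1}{2}\bm{\Omega}_l:\int_{\partial D_r^l}\bm{\sigma}\bm{n}\times(\bm{x}-\bm{x}_l)\,ds;$$
inserting (\ref{balance.f}) and (\ref{balance.t}) and then reassembling with the same rigid-body expansion of $\bm{\phi}$ rewrites the right-hand side as $\int_{D_r}\bm{f}\cdot\bm{\phi}\,d\bm{x}+\sum_l\int_{\partial D_r^l}\bm{f}_l(\bm{x},\bm{u})\cdot\bm{\phi}\,ds$, producing exactly (\ref{eqvarfor}).

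For the reverse implication, membership $\bm{u}\in V_r$ already encodes $\bm{e}(\bm{u})=\bm{0}$ in $D_r$, $\operatorname{div}\bm{u}=0$ in $D_f$, the no-slip condition $\bm{u}=\bm{0}$ on $\partial D$ and the continuity $\llbracket\bm{u}\rrbracket=\bm{0}$ across $\partial D_r$. Testing (\ref{eqvarfor}) against $\bm{\phi}\in C_c^\infty(D_f)^d$, which is trivially admissible, recovers $-\operatorname{div}\bm{\sigma}=\bm{f}$ in $D_f$ in the distributional sense. To recover the balances on a fixed $D_r^{l_0}$ I would construct $\bm{\phi}$ equal to a prescribed rigid motion on $D_r^{l_0}$, vanishing on every other $D_r^l$, and extended as an $H^1$ function compactly supported in a small neighborhood of $\overline{D_r^{l_0}}$ inside $D$; such extensions exist because the particles are pairwise disjoint and compactly contained in $D$. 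Choosing the rigid motion to be the translation $\bm{e}_i$ and then the infinitesimal rotation $\bm{x}\mapsto\bm{\Omega}_i(\bm{x}-\bm{x}_{l_0})$ for a basis of antisymmetric matrices, and integrating by parts in (\ref{eqvarfor}) using the already recovered Stokes equation, produces componentwise the identities (\ref{balance.f}) and (\ref{balance.t}).

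The main technical points are the construction of the admissible test functions in the reverse direction and the bookkeeping of the antisymmetric/cross-product identifications in the surface integrals. The former is handled by a standard cutoff and extension argument that uses the geometric hypothesis that the particles are compactly embedded and mutually disjoint. The latter must be carried out consistently in both directions so that the factor $\tfrac{1}{2}$ arising from the paper's tensorial definition of $\bm{a}\times\bm{b}$ cancels properly when (\ref{balance.t}) is substituted, matching the variational identity (\ref{eqvarfor}) exactly.
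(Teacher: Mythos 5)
Your proposal is correct and follows essentially the same route as the paper: test against rigid-motion test functions, integrate by parts, decompose $\bm{\phi}=\bm{m}_l+\bm{M}_l(\bm{x}-\bm{x}_l)$ on each particle, and substitute the force and torque balances (your factor $\tfrac12$ with the full antisymmetric contraction is just the paper's sum over $i<j$ written differently). The only difference is that in the reverse direction you spell out the cutoff/extension construction of test functions realizing an arbitrary rigid motion on a single particle, a step the paper leaves implicit with ``in the same way as in the first part.''
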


The proofs of {\bf Theorem \ref{thvarfor}} and the following {\bf Theorems \ref{thminfor}} and {\bf \ref{thexun}} are given in the Appendix.

\begin{theorem}
\label{thminfor}
If $\{\bm{u},p\}$, with $\bm{u}\in V_r$ and $p\in L^2(D_f)$ is a weak solution for the system (\ref{system}) then $\bm{u}$ is the unique minimizer of the functional $E_r:H_0^1(D)^d\to\Re\cup\{+\infty\}$, defined by
$$E_r(\bm{v})=\int_D \mu\bm{e}(\bm{v}):\bm{e}(\bm{v})d\bm{x}-\int_D\bm{f}\cdot\bm{v}d\bm{x}+\sum_{l=1}^{N}\int_{\partial D_r^l}g_l(\bm{x},\bm{v})ds+I_{V_r},$$
where $I_S$ represents the indicator function of the set $S$, defined by
\begin{equation}
\nonumber
I_S(s)=
\left\{
\begin{array}{rl}
0 &\mbox{ if } s\in S\\
+\infty &\mbox{ if } s\not\in S.
\end{array}
\right.
\end{equation}
\end{theorem}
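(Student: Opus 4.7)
The plan is to exploit the convexity of $E_r$ and treat the weak formulation (\ref{eqvarfor}) as the Euler--Lagrange condition that $\bm{u}$ be a critical point: a critical point of a convex functional is a global minimum, and strict convexity then gives uniqueness. No compactness or direct-method argument is needed since the weak solution is already assumed to exist.

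For the minimization property, I would fix an arbitrary $\bm{v}\in V_r$ (the case $\bm{v}\notin V_r$ is vacuous because $I_{V_r}(\bm{v})=+\infty$) and combine two pointwise subdifferential inequalities. The strict convexity of $\bm{A}\mapsto\mu\bm{A}:\bm{A}$ gives
\begin{equation*}
\mu\bm{e}(\bm{v}):\bm{e}(\bm{v})\;\ge\;\mu\bm{e}(\bm{u}):\bm{e}(\bm{u}) + 2\mu\bm{e}(\bm{u}):\bm{e}(\bm{v}-\bm{u}),
\end{equation*}
and the convexity and Fr\'echet-differentiability of $g_l(s,\cdot)$ from (G2) gives, a.e.\ on $\partial D_r^l$,
\begin{equation*}
g_l(s,\bm{v}(s))\;\ge\;g_l(s,\bm{u}(s)) + (\bm{v}(s)-\bm{u}(s))\cdot\bm{\nabla}_{\bm{z}} g_l(s,\bm{u}(s)).
\end{equation*}
Integrating the first over $D$, the second over $\partial D_r^l$, summing over $l$, and using $\bm{f}_l=-\bm{\nabla}_{\bm{z}} g_l$, I would obtain
\begin{equation*}
E_r(\bm{v})-E_r(\bm{u})\;\ge\;\int_D 2\mu\bm{e}(\bm{u}):\bm{e}(\bm{v}-\bm{u})\,d\bm{x} -\int_D\bm{f}\cdot(\bm{v}-\bm{u})\,d\bm{x} -\sum_{l=1}^{N}\int_{\partial D_r^l}\bm{f}_l(\bm{x},\bm{u})\cdot(\bm{v}-\bm{u})\,ds.
\end{equation*}
The difference $\bm{\phi}:=\bm{v}-\bm{u}$ lies in $H_0^1(D)^d$ with $\operatorname{div}\bm{\phi}=0$ and $\bm{e}(\bm{\phi})=\bm{0}$ in $D_r$, so it is an admissible test function in Theorem~\ref{thvarfor}; the pressure contribution $\int_{D_f} p\operatorname{div}\bm{\phi}\,d\bm{x}$ drops out because $\operatorname{div}\bm{\phi}=0$, and the right-hand side above is consequently zero. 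Hence $E_r(\bm{v})\ge E_r(\bm{u})$.

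For uniqueness I would use strict convexity of the Dirichlet-type term: if $\bm{u}_1$ and $\bm{u}_2$ are two minimizers, convexity of the remaining terms together with the minimality of $(\bm{u}_1+\bm{u}_2)/2$ forces the parallelogram identity $\mu|\bm{e}(\bm{u}_1)|^2+\mu|\bm{e}(\bm{u}_2)|^2 = 2\mu|\bm{e}((\bm{u}_1+\bm{u}_2)/2)|^2$ a.e.\ in $D$, which gives $\bm{e}(\bm{u}_1-\bm{u}_2)=\bm{0}$; since $\bm{u}_1-\bm{u}_2\in H_0^1(D)^d$ and the only rigid motion vanishing on $\partial D$ is trivial, $\bm{u}_1=\bm{u}_2$. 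The step I expect to require the most care is the legitimacy of integrating the subdifferential inequality for $g_l$ against $\bm{v}-\bm{u}$; here the growth bounds $|g_l(s,\bm{z})|\le b_l(s)+C|\bm{z}|^2$ and $|\bm{\nabla}_{\bm{z}} g_l(s,\bm{z})|\le c_l(s)+C|\bm{z}|$ supplied by Remarks~\ref{remark1}--\ref{remark3}, combined with the continuous trace $H^1(D)^d\to L^2(\partial D_r^l)^d$, make all surface integrals absolutely convergent and justify the pointwise-to-integral passage.
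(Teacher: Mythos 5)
Your proposal is correct and follows essentially the same route as the paper: the weak formulation (\ref{eqvarfor}) tested with $\bm{\phi}=\bm{v}-\bm{u}\in V_r$ kills the first-order terms, and the convexity of the quadratic viscous term together with the subdifferential inequality for $g_l$ (Remark \ref{remark3}) yields $E_r(\bm{v})\ge E_r(\bm{u})$, with uniqueness from strict convexity. Your only departure is cosmetic: you spell out the strict-convexity/uniqueness step via the parallelogram identity and the triviality of rigid motions in $H_0^1(D)^d$, which the paper simply asserts.
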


\begin{theorem}
\label{thexun}
There exists a weak solution $\{\bm{u},p\}$, with $\bm{u}\in V_r$ and $p\in L^2(D_f)$ of the system (\ref{system}), with $\bm{u}$ being unique, and $p$ unique up to a constant.
\end{theorem}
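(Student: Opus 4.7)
The plan is to reduce existence of a weak solution to the minimization of the functional $E_r$ from \textbf{Theorem \ref{thminfor}} via the direct method of the calculus of variations, and then recover the pressure by de Rham's theorem. The admissible set $V_r$ is a closed convex subspace of $H_0^1(D)^d$, since its defining constraints $\operatorname{div}\bm{v}=\bm{0}$ and $\bm{e}(\bm{v})=\bm{0}$ in $D_r$ are linear and weakly $H^1$-continuous, so $I_{V_r}$ is convex and weakly lower semi-continuous. The quadratic viscous term $\int_D\mu|\bm{e}(\bm{v})|^2\,d\bm{x}$ is convex and strongly continuous on $H^1(D)^d$, hence weakly l.s.c., and the body-force term is weakly continuous. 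For the surface contributions, I would combine the convexity of $\bm{z}\mapsto g_l(s,\bm{z})$ from (G2) with the compactness of the trace $H^1(D)\to L^2(\partial D_r^l)$ and the linear lower bound $g_l(s,\bm{v})\geq g_l(s,\bm{0})-c_l(s)|\bm{v}|$ (obtained from the subdifferential inequality of \textbf{Remark \ref{remark3}} applied at $\bm{z}=\bm{0}$, with $c_l\in L^2(\partial D_r^l)$); passing to an a.e.\ convergent subsequence of traces and applying Fatou's lemma to $g_l(s,\bm{v}_n)-[g_l(s,\bm{0})-c_l(s)|\bm{v}_n|]\geq 0$ then yields weak sequential lower semi-continuity of the surface term.

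Coercivity rests on the same ingredients. Korn's and Poincar\'{e}'s inequalities give $\int_D\mu|\bm{e}(\bm{v})|^2\,d\bm{x}\geq c\mu\|\bm{v}\|^2_{H^1(D)}$ on $H_0^1(D)^d$, the body-force term is bounded below by $-\|\bm{f}\|_{L^2}\|\bm{v}\|_{L^2}$, and each surface integral by $-C-C\|\bm{v}\|_{H^1(D)}$ after invoking the trace theorem on the lower bound above. Consequently $E_r(\bm{v})\geq c_1\|\bm{v}\|_{H^1(D)}^2 - c_2\|\bm{v}\|_{H^1(D)} - c_3$ on $V_r$, so $E_r$ is coercive and the direct method produces a minimizer $\bm{u}\in V_r$. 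Uniqueness of $\bm{u}$ is immediate from strict convexity of the principal term on $V_r$ (via Korn) combined with the convexity of every remaining constituent of $E_r$.

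It remains to recover the pressure. By minimality of $\bm{u}$, the Euler--Lagrange condition for $E_r$ yields \eqref{eqvarfor} for every divergence-free $\bm{\phi}\in V_r$, in which the pressure term is absent. Specializing to $\bm{\phi}\in C_c^\infty(D_f)^d$ with $\operatorname{div}\bm{\phi}=0$ shows that the distribution $-\operatorname{div}(2\mu\bm{e}(\bm{u}))-\bm{f}$ on $D_f$ annihilates divergence-free test fields, so by de Rham's theorem there exists $p\in L^2(D_f)$, unique up to an additive constant on each connected component of $D_f$, with $-\operatorname{div}(2\mu\bm{e}(\bm{u})-p\bm{I})=\bm{f}$ in $\mathcal{D}'(D_f)^d$. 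Reinserting this $p$ into the variational identity and testing against a general $\bm{\phi}\in H_0^1(D)^d$ with $\bm{e}(\bm{\phi})=\bm{0}$ in $D_r$ reproduces \eqref{eqvarfor}, and \textbf{Theorem \ref{thvarfor}} then certifies that $\{\bm{u},p\}$ is a weak solution.

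The main technical obstacle I anticipate is precisely this last step: promoting \eqref{eqvarfor} from divergence-free test functions in $V_r$ to the larger class allowed by \textbf{Theorem \ref{thvarfor}}, which requires a careful Ne\v{c}as/de Rham lifting adapted to the presence of the rigid inclusions, together with handling the connectivity of $D_f$ to fix the pressure up to a single constant.
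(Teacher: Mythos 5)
Your strategy coincides with the paper's: take the unique minimizer of $E_r$ as the candidate velocity, derive the Euler--Lagrange identity \eqref{eqvarfor2} on $V_r$ (the paper does this by one-sided difference quotients in $\lambda$, using the convexity of $g_l$ to pass to the limit), and recover $p$ from divergence-free test fields supported in $D_f$ (the paper cites Temam where you invoke de Rham). Your direct-method details (closedness and convexity of $V_r$, Korn--Poincar\'e coercivity, lower semicontinuity of the surface term via compact traces) are correct and in fact supply something the paper leaves implicit, since its proof of Theorem \ref{thexun} simply starts from ``let $\bm{u}$ be the unique minimizer''; note also that (G4) makes the surface term continuous under strong $L^2(\partial D_r^l)$ convergence of the traces, so your Fatou argument is more than enough, and the coercivity bound is the same kind of estimate the paper writes for $E^*$ around \eqref{coercE}.

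The one step you do not actually carry out is the one you flag: ``reinserting $p$ and testing against a general $\bm{\phi}$ with $\bm{e}(\bm{\phi})=\bm{0}$ in $D_r$ reproduces \eqref{eqvarfor}'' does not follow from what precedes it, because the Euler--Lagrange identity is only available for divergence-free $\bm{\phi}\in V_r$ and the de Rham pressure only accounts for test fields supported in $D_f$. The paper closes this gap without any lifting: since $\operatorname{div}\bm{\sigma}\in L^2(D_f)^d$, the normal trace $\bm{\sigma}\bm{n}$ is defined in $H^{-1/2}(\partial D_r)^d$, and integrating by parts in $D_f$ against $\bm{\phi}\in V_r$ and comparing with \eqref{eqvarfor2} gives $\langle\bm{\sigma}\bm{n},\bm{\phi}\rangle_{\partial D_r^l}=\int_{D_r^l}\bm{f}\cdot\bm{\phi}\,d\bm{x}+\int_{\partial D_r^l}\bm{f}_l(\bm{x},\bm{u})\cdot\bm{\phi}\,ds$; choosing $\bm{\phi}\in V_r$ equal to an arbitrary rigid motion $\bm{M}_l(\bm{x}-\bm{x}_l)+\bm{m}_l$ on each particle (such fields exist because rigid motions are divergence free with zero flux through each closed boundary, hence admit divergence-free extensions vanishing on $\partial D$ and on the other particles) yields precisely the balances \eqref{balance.f} and \eqref{balance.t}, i.e.\ the weak-solution property directly, with Theorem \ref{thvarfor} needed only if one then wants \eqref{eqvarfor}. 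Alternatively, your Ne\v{c}as/de Rham route can be run once on the constrained space $W=\{\bm{\phi}\in H_0^1(D)^d:\ \bm{e}(\bm{\phi})=\bm{0}\mbox{ in }D_r\}$: the functional given by the difference of the two sides of \eqref{eqvarfor2} vanishes on $V_r=\ker(\operatorname{div}|_W)$, and $\operatorname{div}$ maps $W$ onto the mean-zero part of $L^2(D_f)$ (Bogovskii on $D_f$ suffices, as $H_0^1(D_f)^d\subset W$), which produces $p$ and \eqref{eqvarfor} simultaneously. Your caveat about connectedness of $D_f$ is well taken: ``unique up to a constant'' tacitly assumes it, and the paper glosses over this point too.
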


\section{Formulation of the Homogenization Problem}
\label{sec3}
In this section we formulate the homogenization problem. We focus on a suspension of rigid particles in a viscous fluid, periodically distributed, where the size of the particles is of the same order as the size of the period.

Let $D$ be a bounded domain in $\Re^d$ with Lipschitz boundary and let $Y=\left(-\dfrac{1}{2},\dfrac{1}{2}\right)^d$ be the unit cube in $\Re^d$. For every $\e > 0$, let $N^\e$ be the set of all points $k\in\Int^d$ such that $\e(k+Y)$ is strictly included in $D$ and denote by $|N^\e|$ the total number of them. Let $T$ be the closure of an open connected set with Lipschitz boundary, compactly included in $Y$. For every $\e > 0$ and $k\in N^\e$ we consider the set $T^\e_k\subset\subset \e(k+Y)$, where $T^\e_k=\e(k+T)$. The set $T^\e_k$ represents one rigid particle, suspended in the fluid occupying the domain $D$.

We now define the following subsets of $D$:
$$D^\e_r=\displaystyle\bigcup_{k\in N^\e} T^\e_k\ ,\ \ \ \ D^\e_f=D\setminus D^\e_r.$$ 
As in {\bf Section \ref{sec2}}, $D^\e_r$ represents the union of the rigid suspensions, and $D^\e_f$ the part of the domain $D$ filled with a viscous incompressible fluid of viscosity $\mu$. Let $\bm{n}$ be the unit normal on the boundary of $D^\e_f$ that points out of the fluid region.

Now we describe the random superficial forces acting on the boundaries of all $T^\e_k$, for $k\in N^\e$. Let $(\Omega,\mc{F},P)$ be a probability space and assume there exists a function $g:\partial T\times \Re^d\times \Omega\to \Re$, extended Y-periodically in the first variable to a function defined on $\bigcup_{k\in\Int^d}(k+\partial T)\times \Re^d\times \Omega$, with the following properties:

\vspace{3mm}
(H1) for every fixed $\bm{z}\in\Re^d$ and for almost every $\omega\in\Omega$ the function $s\mapsto g(s,\bm{z},\omega)$ is measurable with respect to the surface measure on $\partial T$,

\vspace{3mm}
(H2) for almost every $s\in\partial T$ and for almost every $\omega\in\Omega$ the function $\bm{z}\mapsto g(s,\bm{z},\omega)$ is convex and Fr\'{e}chet differentiable at every point $\bm{z}\in\Re^d$, and the Fr\'{e}chet differential is continuous on
$\Re^d$,

\vspace{3mm}
(H3) for every fixed $\bm{z}\in\Re^d$ and for almost every $s\in\partial T$ the function $\omega\mapsto g(s,\bm{z},\omega)$ is measurable with respect to $P$,

\vspace{3mm}
(H4) $s\mapsto g(s,\bm{0},\omega)$ belongs to $L^1 (\partial T \times \Omega)$,

\vspace{3mm}
(H5) there exists $\gamma\in (0,1]$ and a function $a\in L^{\frac{2}{2-\gamma}}(\partial T\times \Omega)$, such that
\begin{equation}
\label{h5}
g(s,\bm{z}_1,\omega)-g(s,\bm{z}_2,\omega)\leq C a(s,\omega)|\bm{z}_1-\bm{z}_2|^\gamma,
\end{equation}
for almost every $s\in\partial T$, a.s. $\omega\in\Omega$, and all $\bm{z}_1,\bm{z}_2\in\Re^d.$

\vspace{3mm}
On the probability space $(\Omega,\mc{F},P)$, we consider a $d$-dimensional dynamical system $\tau$, which is a family of mappings $(\tau_k)_{k\in\Int^d}$ on $\Omega$ that satisfy the following properties:

\vspace{3mm}
(T1) Group property, i.e. $\tau_0$ is the identity and
$$\tau_k\circ\tau_l=\tau_{k+l} \ \ \mbox{ for all } k,l\in\Int^d$$

(T2) Invariance, i.e. the mappings $(\tau_k)_{k\in\Int^d}$ are measurable and measure preserving, which means that
$$P(\tau_k^{-1}{B})=P(B) \ \ \mbox{ for all } k,l\in\Int^d \ \mbox{and} \ B\in\mc{F}$$

If the system $\tau$ satisfies the additional ergodicity property:

\vspace{3mm}
(T3)
$$\mbox{if }\tau_k{B}=B \mbox{ for all } k\in\Int^d \mbox{ then } B\in\{\emptyset,\Omega\}$$
then the system is called ergodic.

\vspace{3mm}
For every ${k\in\Int^d}$, and to any measurable function $f$ in $(\Omega,\mc{F},P)$ we associate the function $\tau_k f=f\circ\tau_k$ which has the same distribution as $f$. Therefore, the dynamical system induces a $d$-parameter group of isometries on $L^p(\Omega)$ for $1\leq p\leq\infty$.

A set $B\in\mc{F}$ is called $\tau$-invariant if $P(\tau_kB\Delta B)=0$ for every ${k\in\Int^d}$, where $\Delta$ denotes symmetric difference. A measurable function $f$ is called $\tau$-invariant if for every ${k\in\Int^d}$: $\tau_k f=f$ a.s.
It can be easily shown that the $\tau$-invariant sets form a sub $\sigma$-algebra of $\mc{F}$, denoted by $\mc{I}$, and a function is $\tau$-invariant if and only if it is measurable with respect to $\mc{I}$.
Then, the ergodicity property is equivalent to any of the following:
$$ B \ \tau\mbox{-invariant }\mbox{ implies } P(B)\in\{0,1\},$$
$$ f \ \tau\mbox{-invariant }\mbox{ implies } f\equiv\mbox{constant} \ \ a.s.$$
Given $\tau$ a $d$-dimensional dynamical system, to any function $f$ from $L^1(\Omega)$ we associate an additive process on finite subsets $F$ from $\Int^d$ with values in $L^1(\Omega)$:
$$S(F,f)(\omega)=\sum_{k\in F}\tau_k f(\omega)$$
and for every $F$ we denote by $A(F,f)$ the following average of $f$
$$A(F,f)(\omega)=\displaystyle\frac{1}{|F|}S(F,f)(\omega).$$
We now use the pointwise Ergodic Theorem (\cite{krengel1}, Ch. 6 or \cite{dunsch1} Ch. 8), which states that the sequence of averages $A(nQ,f)$ will converge pointwise to the expected value of the function $f$ with respect to the $\sigma$-- algebra $\mc{I}$, $\mb{E}(f|\mc{I})$ a.s. $\omega\in\Omega$, where $Q$ is a nontrivial cube in $\Int^d$, i.e. there exists $u,v\in\Int^d$ with $u_i< v_i$ for every $i\in \{1,\ldots,d\}$ and $$Q=[u,v]=\{k\in\Int^d\ |\ u\leq k \leq v\}.$$ If $\tau$ is ergodic, then $A(nQ,f)$ converges pointwise to $\displaystyle\int_\Omega f dP$ a.s. $\omega\in\Omega$.

We present now an equivalent formulation of the pointwise Ergodic Theorem (see \cite{marver2} for the proof of equivalence) that will be applied throughout this paper. If $U\in\Re^d$, open, bounded with Lipschitz boundary and $\e>0$, we use the notation
$$\dfrac{U}{\e}=\{\bm{x}\in\Re^d\ |\ \e\bm{x}\in U\}.$$
Then, if $f\in L^1(\Omega)$ and $\tau$ is an ergodic dynamical system acting on $\Omega$, the sequence of averages $A\left(\Int^d\cap \dfrac{U}{\e},f\right)$ will converge pointwise to $\displaystyle\int_\Omega f dP$ a.s. $\omega\in\Omega$.

Define the functions $g^\e_k:\partial T^\e_k\times\Re^d\times\Omega\to \Re$ by
\begin{equation}
\label{defg^e_k}
g_k^\e(s,\bm{z},\omega)=\e g\left(\frac{s}{\e},\bm{z},\tau_k\omega\right),
\end{equation}
for every $k\in N^\e$, where $\e$ is the size of the periodic structure, the function $g$ satisfies (H1) -- (H5) and the family $(\tau_k)_{k\in\Int^d}$ satisfies (T1) -- (T3). We notice now that a.s. $\omega\in\Omega$ the function $(s,\bm{z})\mapsto g_k^\e(s,\bm{z},\omega)$ satisfies (G1) -- (G4).

If $\bm{f}^\e\in L^2(D)$ is the body force and if the random superficial forces acting on the boundaries of $T^\e_k$ are given for every $\omega\in\Omega$ by
\begin{equation}
\label{deff^e_k}
(\bm{f}_k^\e)_\omega=-\bm{\nabla}_{\bm{z}}g_k^\e(s,\bm{z},\tau_k\omega),
\end{equation}
then for every $\omega\in\Omega$, the system that we intend to study is the equivalent to the system (\ref{system})
\begin{equation}
\label{systemhe}
\left\{
\begin{array}{rll}
-\operatorname{div}\bm{\sigma}^\e_\omega &=\bm{f}^\e \ \mbox{in} \ D_f^\e, \\
\operatorname{div}\bm{u}^\e_\omega &=0 \ \mbox{in}\ D_f^\e, \\
\bm{e}(\bm{u}^\e_\omega)&=\bm{0} \ \mbox{in}\ D_r^\e, \\
\llbracket\bm{u}^\e_\omega \rrbracket&=\bm{0} \ \mbox{on}\ \partial D_r^\e, \\
\displaystyle\int_{\partial T^\e_k}\bm{\sigma}^\e_\omega\bm{n}ds&=\displaystyle\int_{T^\e_k}\bm{f}^\e d\bm{x}+\int_{\partial T^\e_k}\bm{f}_k^\e(\bm{x},\bm{u}^\e,\tau_k\omega) ds, \mbox{ for }\ k\in N^\e, \\
\displaystyle\int_{\partial T^\e_k}\bm{\sigma}^\e_\omega\bm{n}\times (\bm{x}-\bm{x}^\e_k)ds&=\displaystyle\int_{T^\e_k}\bm{f}^\e\times (\bm{x}-\bm{x}^\e_k) d\bm{x}\\
&+\displaystyle\int_{\partial T^\e_k}\bm{f}_k^\e(\bm{x},\bm{u},\tau_k\omega) \times (\bm{x}-\bm{x}^\e_k)ds, \mbox{ for }\ k\in N^\e, \\
\bm{u}^\e_\omega &=\bm{0} \ \mbox{ on }\ \partial D.
\end{array}
\right.
\end{equation}
where, for every $\omega\in\Omega$, $\bm{u}^\e_\omega $ is the velocity of the fluid, $p^\e_\omega $ is the pressure and $\bm{\sigma}^\e_\omega =2\mu\bm{e}(\bm{u}^\e_\omega)-p^\e _\omega I$ is the stress tensor associated to $\bm{u}^\e_\omega $ and $p^\e_\omega $. In (\ref{systemhe}), $\bm{x}^\e_k$ is the center of mass of $T^\e_k$, $\bm{x}^\e_k=\e k+ \e\bm{x}_0$, where $\bm{x}_0$ is the center of mass of $T$.

We define the space $V^\e$, equivalent to the space $V_r$ defined in (\ref{space.u})
\begin{equation}
\label{space.ue}
V^\e=\{\bm{v}\in H_0^1(D)^d\ |\ \operatorname{div}\bm{v}=0 \mbox{ and }\bm{e}(\bm{v})=\bm{0} \mbox{ in } D_r^\e\},
\end{equation}
and we look for a weak solution of the system (\ref{systemhe}), $\bm{u}^\e_\omega \in V^\e$ and $p^\e_\omega \in L^2(D_f^\e)$.

We know from {\bf Theorem \ref{thvarfor}} that a weak solution of the system (\ref{systemhe}) satisfies the following equivalent variational formulation
\begin{equation}
\label{eqvarforhe}
\int_D 2\mu\bm{e}(\bm{u}^\e_\omega):\bm{e}(\bm{\phi})d\bm{x}-\int_{D_f^\e} p^\e_\omega \operatorname{div}\bm{\phi}d\bm{x}=\int_D\bm{f}^\e\cdot\bm{\phi}d\bm{x}+\sum_{k\in N^\e}\int_{\partial T^\e_k}-\bm{\nabla}_{\bm{u}}\e g\left(\frac{s}{\e},\bm{u},\tau_k\omega\right) \cdot\bm{\phi}ds,
\end{equation}
for every $\bm{\phi}\in H_0^1(D)^d$ such that $\bm{e}(\bm{\phi})=\bm{0}$ in $D^\e_r$. We also have, from {\bf Theorem \ref{thexun}} that the solution exists and is unique in $V^\e\times L^2(D_f^\e)$ and from {\bf Theorem \ref{thminfor}} that $\bm{u}^\e_\omega$ is the minimizer of the functional $E^\e_\omega:H_0^1(D)^d\to\Re\cup\{+\infty\}$, defined by
\begin{equation}
\label{defE^e_omega}
E^\e_\omega(\bm{v})=\int_D \mu\bm{e}(\bm{v}):\bm{e}(\bm{v})d\bm{x}-\int_D\bm{f}^\e\cdot\bm{v}d\bm{x}+\sum_{k\in N^\e}\int_{\partial T^\e_k}\e g\left(\frac{s}{\e},\bm{v},\tau_k\omega\right)ds+I_{V^\e}(\bm{v}).
\end{equation}

In the next chapter we study the limiting behaviour of the system (\ref{systemhe}) as $\e\to 0$ for the suspension of rigid particles in an incompressible viscous flow and determine the effective properties of the suspension. We show that the solutions of the system (\ref{systemhe}) converges when $\e\to 0$, to a solution of a system that we will describe later. We take advantage of the minimization property of the solution $\bm{u}^\e_\omega$, and justify the obtained asymptotics using the $\Gamma$-- convergence method, introduced by De Giorgi, that implies the convergence of minimizers.

\section{Homogenization Results}
\label{sec4}
\subsection{Cell Problems}
In order to determine the $\Gamma$-- limit of functionals $E^\e_\omega$ we need first to define the so-called ``cell problems'' and their solutions. Let $\mc{S}$ be the subset of $\Re^{d\times d}$ consisting of symmetric matrices. For any $\bm{A}\in\mc{S}$, consider the following minimization problem:
\begin{equation}
\label{solcell1}
\min_{\bm{v}\in K_{\bm{A}}} \int_Y \mu [\bm{A}\mathds{1}_T-\bm{e}(\bm{v})]:[\bm{A}\mathds{1}_T-\bm{e}(\bm{v})] d\bm{y},
\end{equation}
where
\begin{equation}
\label{defK_A}
K_{\bm{A}}=\{\bm{v}\in H_{per}^1(Y)^d / \Re\ |\ \bm{e}(\bm{v})=\bm{A}\mbox{ in } T,\operatorname{div}\bm{v}=0\mbox{ in } Y\setminus T \},
\end{equation}
and $\mathds{1}_T$ represents the characteristic function of the set $T$, defined by
\begin{equation}
\nonumber
\mathds{1}_T(\bm{y})=
\left\{
\begin{array}{rl}
1 &\mbox{ if }\bm{y}\in T\\
0 &\mbox{ if }\bm{y}\in Y\setminus T.
\end{array}
\right.
\end{equation}

The choice of the space $H_{per}^1(Y)^d / \Re$, which we identify from now on with the subspace of $H_{per}^1(Y)^d$ consisting of zero mean vector fields, is justified by the fact that the functional we are trying to minimize $\bm{v}\to \displaystyle\int_Y \mu [\bm{A}\mathds{1}_T-\bm{e}(\bm{v})]:[\bm{A}\mathds{1}_T-\bm{e}(\bm{v})] d\bm{y}$ does not change if we add to any $\bm{v}\in H^1_{per}(Y)^d$ a constant, thus on $H^1_{per}(Y)^d$ the functional is not coercive and also we cannot have uniqueness of the solution.

A solution for the minimization problem (\ref{solcell1}) thus exists because the functional $\bm{v}\to \displaystyle\int_Y \mu [\bm{A}\mathds{1}_T-\bm{e}(\bm{v})]:[\bm{A}\mathds{1}_T-\bm{e}(\bm{v})] d\bm{y}$ is convex, continuous in the strong topology of $H^1(Y)^d / \Re$ and coercive, and the set $K_{\bm{A}}$ is closed and convex.
The minimizer $\bm{\chi}_{\bm{A}}\in K_{\bm{A}}$ may be also characterized by
\begin{equation}\nonumber
\int_Y 2\mu [\bm{A}\mathds{1}_T-\bm{e}(\bm{\chi}_{\bm{A}})]:[\bm{e}(\bm{\chi}_{\bm{A}})-\bm{e}(\bm{v})] d\bm{y} =0,
\end{equation}
for every $\bm{v}\in K_{\bm{A}}$, which is equivalent to
\begin{equation}
\label{solcell3}
\int_Y 2\mu [\bm{A}\mathds{1}_T-\bm{e}(\bm{\chi}_{\bm{A}})]:\bm{e}(\bm{\phi}) d\bm{y} =0,
\end{equation}
for every $\bm{\phi}\in K_{\bm{0}}$. We easily see from here that $\bm{\chi}_{\bm{A}}\in K_{\bm{A}}$ is unique.

The formulation (\ref{solcell3}) also implies that $\bm{\chi}_{\bm{A}}$ is the solution of the following Stokes system (see also \cite{levy-sp} for the formulation of the cell problem):
\begin{equation}
\label{solcell4}
\left\{
\begin{array}{rll}
-\Delta\bm{\chi}_{\bm{A}} +\bm{\nabla} \eta_{\bm{A}} &=\bm{0} \ \mbox{in} \ Y\setminus T, \\
\operatorname{div}\bm{\chi}_{\bm{A}}&=0 \ \mbox{in}\ Y\setminus T, \\

\bm{e}(\bm{\chi}_{\bm{A}})&=\bm{A} \ \mbox{in}\ T, \\
\bm{\chi}_{\bm{A}}&\in H^1_{per}(Y)^d / \Re, \\
\end{array}
\right.
\end{equation}
with $\bm{\chi}_{\bm{A}}\in K_{\bm{A}}$ and $\eta_{\bm{A}}\in L^2(Y\setminus T) / \Re$ are unique.

If we extend $\mathds{1}_T$ and $\bm{\chi}_{\bm{A}}$ by periodicity to the whole $\Re^d$, and keep for simplicity the same notations, we obtain that $\bm{\chi}_{\bm{A}}\in H^1_{loc}(\Re^d)^d$ as a consequence of periodicity. Moreover, the following result holds:
\begin{theorem}
\label{thcellglob}
For every divergence free vector field $\bm{\phi}\in H^1(\Re^d)^d$ with bounded support, having the property that $\bm{e}(\bm{\phi})_{|k+T}=\bm{0}$, $\forall k\in\Int^d$ one has
\begin{equation}
\label{cellglob1}
\int_{\Re^d} 2\mu [\bm{A}\mathds{1}_T-\bm{e}(\bm{\chi}_{\bm{A}})]:\bm{e}(\bm{\phi}) d\bm{x} =0.
\end{equation}
\end{theorem}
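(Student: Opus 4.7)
The plan is to reduce (\ref{cellglob1}) to a finite sum of surface integrals over the boundaries $\partial(k+T)$ via integration by parts using the Stokes characterization (\ref{solcell4}), and then to show each such surface integral vanishes by extracting a force--torque balance from the variational identity (\ref{solcell3}). Since $\bm{e}(\bm{\chi}_{\bm{A}}) = \bm{A}\mathds{1}_T$ on each $k+T$ and $\bm{e}(\bm{\phi}) = \bm{0}$ there by hypothesis, the integrand in (\ref{cellglob1}) vanishes on the rigid region $\bigcup_k(k+T)$, so only the integral over the fluid complement $\Omega := \Re^d\setminus\bigcup_k(k+T)$ survives, with integrand $-2\mu\,\bm{e}(\bm{\chi}_{\bm{A}}):\bm{e}(\bm{\phi})$. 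Using the symmetry of $\bm{e}(\bm{\chi}_{\bm{A}})$, the identity $\operatorname{div}\bm{\phi} = 0$, the compact support of $\bm{\phi}$, and the equation $-\Delta\bm{\chi}_{\bm{A}}+\bm{\nabla}\eta_{\bm{A}} = \bm{0}$ in $\Omega$, Green's formula yields
\[
\int_{\Re^d} 2\mu[\bm{A}\mathds{1}_T-\bm{e}(\bm{\chi}_{\bm{A}})]:\bm{e}(\bm{\phi})\, d\bm{x} = \sum_{k\in\Int^d}\int_{\partial(k+T)}\bm{\sigma}_{\bm{A}}\bm{n}_k\cdot\bm{\phi}\, ds,
\]
where $\bm{\sigma}_{\bm{A}} := 2\mu\,\bm{e}(\bm{\chi}_{\bm{A}}) - \mu\eta_{\bm{A}}\bm{I}$, $\bm{n}_k$ is the unit normal outward from $k+T$, and only finitely many $k$ actually contribute.

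On each $k+T$, the condition $\bm{e}(\bm{\phi}) = \bm{0}$ forces $\bm{\phi} = \bm{a}_k + \bm{B}_k(\bm{x}-\bm{x}_k)$ for some $\bm{a}_k\in\Re^d$ and antisymmetric $\bm{B}_k$, with $\bm{x}_k = k + \bm{x}_0$ the center of $k+T$; by $H^1$ trace continuity this representation extends to $\partial(k+T)$. A short calculation exploiting the antisymmetry of $\bm{B}_k$ and the paper's definition of $\bm{a}\times\bm{b}$ yields
\[
\int_{\partial(k+T)}\bm{\sigma}_{\bm{A}}\bm{n}_k\cdot\bm{\phi}\, ds = \bm{a}_k\cdot\bm{F}_k - \tfrac{1}{2}\bm{B}_k:\bm{M}_k,
\]
with $\bm{F}_k := \int_{\partial(k+T)}\bm{\sigma}_{\bm{A}}\bm{n}_k\, ds$ and $\bm{M}_k := \int_{\partial(k+T)}(\bm{x}-\bm{x}_k)\times(\bm{\sigma}_{\bm{A}}\bm{n}_k)\, ds$. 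By the $Y$-periodicity of $\bm{\chi}_{\bm{A}}$ and $\eta_{\bm{A}}$, these quantities are independent of $k$, so it suffices to prove the balances $\bm{F}_0 = \bm{0}$ and $\bm{M}_0 = \bm{0}$. I obtain both by testing (\ref{solcell3}) against $\bm{\phi}\in K_{\bm{0}}$ whose restriction to $T$ is the prescribed rigid motion $\bm{a} + \bm{B}(\bm{x}-\bm{x}_0)$. Such test functions exist: the flux $\int_{\partial T}\bm{\phi}\cdot\bm{n}\, ds$ vanishes for any rigid motion (the translation part by the divergence theorem, the rotational part because $\operatorname{tr}\bm{B}=0$), so a divergence-free extension of the prescribed trace into $Y\setminus T$ with periodic conditions on $\partial Y$ exists; subtracting its mean places the extension in $K_{\bm{0}}$ without disturbing $\bm{e}(\bm{\phi})$ or $\operatorname{div}\bm{\phi}$. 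Repeating the integration-by-parts computation inside $Y$ then gives $\bm{a}\cdot\bm{F}_0 - \tfrac{1}{2}\bm{B}:\bm{M}_0 = 0$ for all $(\bm{a},\bm{B})$, hence $\bm{F}_0 = \bm{0}$ and $\bm{M}_0 = \bm{0}$.

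The delicate points are the integration by parts across the interfaces $\partial(k+T)$, since $\eta_{\bm{A}}$ is merely in $L^2(Y\setminus T)$ with a possible jump across $\partial T$ (handled by interpreting $\bm{\sigma}_{\bm{A}}\bm{n}_k\in H^{-1/2}(\partial(k+T))^d$ and pairing with the trace of the divergence-free $\bm{\phi}$), and the construction of the admissible rigid-motion test function in $K_{\bm{0}}$, which is a divergence problem on $Y\setminus T$ with compatible Dirichlet/periodic data. Neither step is genuinely hard in isolation, but together they are what allows the surface integrals to be identified with cell-problem force and torque balances and subsequently cancelled, giving (\ref{cellglob1}).
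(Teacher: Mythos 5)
Your argument is correct in substance, but it takes a genuinely different route from the paper. The paper never produces any surface integrals: it covers $\mathrm{supp}\,\bm{\phi}$ by translated cells $(Y_i)$ whose boundaries avoid all $k+T$, attaches a partition of unity $(\theta_i)$ with $\theta_i\equiv 1$ near $k_i+T$, folds each $\theta_i\bm{\phi}$ back to the unit cell by periodicity, and applies the cell formulation there; each folded piece contributes only a pressure term $-\int_{Y\setminus T}\eta_{\bm{A}}\operatorname{div}(\theta_i\bm{\phi})^{\#}$, and these cancel in the sum because $\sum_i\theta_i\bm{\phi}=\bm{\phi}$ is divergence free. You instead integrate by parts in the unbounded fluid region to reduce \eqref{cellglob1} to $\sum_k\int_{\partial(k+T)}\bm{\sigma}_{\bm{A}}\bm{n}_k\cdot\bm{\phi}\,ds$, recognize each term as $\bm{a}_k\cdot\bm{F}_0-\tfrac12\bm{B}_k:\bm{M}_0$ by the rigid form of $\bm{\phi}$ on $k+T$, and then prove the force and torque balances $\bm{F}_0=\bm{M}_0=\bm{0}$ by testing \eqref{solcell3} with an element of $K_{\bm{0}}$ that equals a prescribed rigid motion on $T$ (your flux-compatibility and divergence-correction construction is sound, and parallels the second part of Lemma \ref{lemmaextension}). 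What your route buys is the physically transparent statement that the cell problem enforces zero net force and torque on the inclusion, which the paper's proof never makes explicit. What it costs is the extra machinery you partly acknowledge: besides the $H^{-1/2}$ interpretation of $\bm{\sigma}_{\bm{A}}\bm{n}$ on $\partial(k+T)$, your global integration by parts also needs that the $Y$-periodic extension of $(\bm{\chi}_{\bm{A}},\eta_{\bm{A}})$ satisfies the Stokes system \emph{across the cell faces} $k+\partial Y$ inside the fluid (otherwise spurious face terms appear); this is true, but it is exactly the kind of statement most cleanly justified by the paper's own partition-of-unity/folding argument or by the weak periodic formulation with pressure, so it should be stated and proved rather than used tacitly. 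Likewise the periodic cancellation of the $\partial Y$ boundary terms in your cell computation of $\bm{F}_0,\bm{M}_0$ deserves a line. With those points filled in, your proof is a valid, somewhat longer alternative to the paper's localization argument.
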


\begin{proof}
Here we follow the idea of \cite{ciodon}, Th. 4.28, and cover the support of $\bm{\phi}$ with finitely many translated cells of $Y$, $(Y_i)_{i=1}^m$, chosen in such a way that $\partial{(k+T)}$ and $\partial Y_i$ are disjoint for every $k\in\Int^d$ and every $1\leq i\leq m$. From the choice of the sets $(Y_i)_{i=1}^m$, it is obvious that for each $1\leq i \leq m$ there exists a unique $k_i\in\Int^d$ such that $k_i+T \subset\subset Y_i$. So, we can associate a smooth partition of unity $(\theta_i)_{i=1}^m$ to the covering $(Y_i)_{i=1}^m$ with the properties
\begin{equation}\nonumber
\left\{
\begin{array}{l}
\theta_i\in C^\infty_0(Y_i),\ 0\leq \theta_i\leq 1,\ \theta_i\equiv 1\mbox{ on a neighborhood of } k_i+T,\ \forall i:\ 1\leq i\leq m,\\
\\
\displaystyle\sum_{i=1}^m \theta_i = 1 \mbox{ on a neighborhood of } \mbox{ supp }\bm{\phi}.
\end{array}
\right.
\end{equation}
If $(\theta_i\bm{\phi})^\#$ represents the vector field defined in $Y$ after we extend $Y$-- periodically $\theta_i\bm{\phi}$, then $(\theta_i\bm{\phi})^\#$ is $Y$-- periodic and $\bm{e}((\theta_i\bm{\phi})^\#)\equiv\bm{0}$ on $T$, so
\begin{equation}\nonumber
\begin{split}
\int_{\Re^d} 2\mu [\bm{A}\mathds{1}_T-\bm{e}(\bm{\chi}_{\bm{A}})]:\bm{e}(\bm{\phi}) d\bm{x}&=\sum_{i=1}^m\int_{\Re^d} 2\mu [\bm{A}\mathds{1}_T-\bm{e}(\bm{\chi}_{\bm{A}})]:\bm{e}(\theta_i\bm{\phi}) d\bm{x}\\
&=\sum_{i=1}^m\int_{Y_i} 2\mu [\bm{A}\mathds{1}_T-\bm{e}(\bm{\chi}_{\bm{A}})]:\bm{e}(\theta_i\bm{\phi}) d\bm{x}\\
&=\sum_{i=1}^m\int_{Y} 2\mu [\bm{A}\mathds{1}_T-\bm{e}(\bm{\chi}_{\bm{A}})]:\bm{e}((\theta_i\bm{\phi})^\#) d\bm{y},\\
\end{split}
\end{equation}
and using the formulation (\ref{solcell4}) and the properties of the functions $(\theta_i)_{i=1}^m$ we obtain that
\begin{equation}\nonumber
\begin{split}
\int_{\Re^d} 2\mu [\bm{A}\mathds{1}_T-\bm{e}(\bm{\chi}_{\bm{A}})]:\bm{e}(\bm{\phi}) d\bm{x}&=\sum_{i=1}^m\int_{Y\setminus T} -\eta_{\bm{A}} \operatorname{div}(\theta_i\bm{\phi})^\# d\bm{y}\\
&=\int_{Y} -\eta_{\bm{A}} \operatorname{div}(\sum_{i=1}^m\theta_i\bm{\phi})^\# d\bm{y}=0.
\end{split}
\end{equation}
\end{proof}
We have the following consequence of {\bf Theorem \ref{thcellglob}}:
\begin{cor}
\label{cor1}
For any $\bm{A}\in\mc{S}$, one has $\bm{\chi}_{\bm{A}}\in H^2(Y)^d$ and $\eta_{\bm{A}}\in H^1(Y\setminus T)$ and the following estimates hold:
$$||\bm{\chi}_{\bm{A}}||_{H^2(Y)^d}\leq C ||\bm{A}||,~~\quad ||\eta_{\bm{A}}||_{H^1(Y\setminus T)}\leq C ||\bm{A}||,$$
where the constant $C$ is independent of $\bm{A}\in\mc{S}$.
\end{cor}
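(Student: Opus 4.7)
The plan is to combine interior and up-to-the-boundary Stokes regularity applied separately on $T$ and on $Y\setminus T$ with an a priori $H^1$-bound from the coercive minimization (\ref{solcell1}), then use linearity in $\bm{A}$ to extract the claimed scaling.

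First, inside $T$ the condition $\bm{e}(\bm{\chi}_{\bm{A}}) = \bm{A}$ is constant on the connected set $T$, so the classical characterization of infinitesimal rigid motions yields
$$\bm{\chi}_{\bm{A}}(\bm{y}) = \bm{A}\bm{y} + \bm{W}_{\bm{A}}\bm{y} + \bm{c}_{\bm{A}} \quad \text{on } T,$$
with $\bm{W}_{\bm{A}}$ a constant antisymmetric matrix and $\bm{c}_{\bm{A}} \in \Re^d$. In particular $\bm{\chi}_{\bm{A}}|_T \in C^\infty(T)^d$, and its $H^2(T)^d$-norm is controlled by $|\bm{A}| + |\bm{W}_{\bm{A}}| + |\bm{c}_{\bm{A}}|$.

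Next, on $Y \setminus T$ the pair $(\bm{\chi}_{\bm{A}}, \eta_{\bm{A}})$ solves the homogeneous Stokes system (\ref{solcell4}) with Dirichlet data on $\partial T$ given by the affine expression above and periodic boundary conditions on $\partial Y$. I would invoke the standard Cattabriga/Galdi Stokes regularity theory to obtain $\bm{\chi}_{\bm{A}} \in H^2(Y \setminus T)^d$ and $\eta_{\bm{A}} \in H^1(Y \setminus T)$, together with
$$\|\bm{\chi}_{\bm{A}}\|_{H^2(Y\setminus T)^d} + \|\eta_{\bm{A}}\|_{H^1(Y\setminus T)} \leq C\, \|\bm{\chi}_{\bm{A}}|_{\partial T}\|_{H^{3/2}(\partial T)^d} \leq C(|\bm{A}| + |\bm{W}_{\bm{A}}| + |\bm{c}_{\bm{A}}|).$$
To close this estimate I must control $|\bm{W}_{\bm{A}}|$ and $|\bm{c}_{\bm{A}}|$ linearly by $|\bm{A}|$. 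Testing (\ref{solcell1}) with $\bm{v} \equiv 0$ gives $\int_Y \mu |\bm{A}\mathds{1}_T - \bm{e}(\bm{\chi}_{\bm{A}})|^2\, d\bm{y} \leq C|\bm{A}|^2$, so by Korn's inequality on the quotient space $H^1_{per}(Y)^d/\Re$ one has $\|\bm{\chi}_{\bm{A}}\|_{H^1(Y)^d} \leq C|\bm{A}|$. Restricting to $T$ and using that $\bm{\chi}_{\bm{A}}|_T$ is the explicit affine function above (together with the zero-mean normalization) then yields $|\bm{W}_{\bm{A}}| + |\bm{c}_{\bm{A}}| \leq C|\bm{A}|$, and combining with the previous displays gives the stated estimates.

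The main technical point to flag is the interpretation of $H^2(Y)^d$: standard Stokes $H^2$-up-to-the-boundary regularity requires $\partial T$ to be at least $C^{1,1}$, stronger than the Lipschitz assumption stated in Section \ref{sec3}, and even then the normal derivative of $\bm{\chi}_{\bm{A}}$ generically jumps across $\partial T$ (the interface carries a nontrivial surface traction balanced by rigid body test fields via (\ref{solcell3})). The claim should therefore be read in the piecewise sense $H^2(T)^d \oplus H^2(Y \setminus T)^d$, under an implicit additional smoothness hypothesis on $\partial T$; with this convention the estimates above complete the argument.
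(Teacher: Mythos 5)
Your overall structure (affine rigidity inside $T$, Stokes regularity in $Y\setminus T$, then an a priori $H^1$ bound to control the rigid part) is viable, but the step that launches your quantitative estimate fails as written: $\bm{v}\equiv\bm{0}$ is not an admissible competitor in (\ref{solcell1}), because membership in $K_{\bm{A}}$ (see (\ref{defK_A})) requires $\bm{e}(\bm{v})=\bm{A}$ in $T$, which $\bm{0}$ violates whenever $\bm{A}\neq\bm{0}$. Hence the inequality $\int_Y\mu|\bm{A}\mathds{1}_T-\bm{e}(\bm{\chi}_{\bm{A}})|^2\,d\bm{y}\leq C\|\bm{A}\|^2$ does not follow from ``testing with $\bm{v}=\bm{0}$'': the minimum over $K_{\bm{A}}$ is only compared against admissible fields, and the value of the functional at an inadmissible one gives no bound. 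The estimate itself is true, but to obtain it along your route you must exhibit some $\bm{v}_0\in K_{\bm{A}}$ with $\|\bm{v}_0\|_{H^1(Y)^d}\leq C\|\bm{A}\|$ --- for instance extend the field $\bm{A}\bm{y}$ from $T$ to an $H^1_0(Y)^d$ field as in Lemma \ref{lemmaextension} and correct its divergence in $Y\setminus T$ by an auxiliary Stokes problem --- and only then conclude $\|\bm{e}(\bm{\chi}_{\bm{A}})\|_{L^2(Y)}\leq C\|\bm{A}\|$, apply Korn, and control $|\bm{W}_{\bm{A}}|+|\bm{c}_{\bm{A}}|$. Alternatively, the paper sidesteps all of this: since $\bm{A}\mapsto\bm{\chi}_{\bm{A}}$ is linear (by uniqueness in (\ref{solcell3})--(\ref{solcell4})) and $\mc{S}$ is finite dimensional, the map $\bm{A}\mapsto\|\bm{\chi}_{\bm{A}}\|_{H^2(Y)^d}$ is automatically bounded by $C\|\bm{A}\|$; that one-line observation is the entirety of the paper's estimate argument, and it is the cleanest repair of your gap.

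On the regularity half your argument matches the paper's in substance: the paper cites Temam's regularity theorem after noting that the data on $\partial T$ are smooth (the constraint in $T$ forces an affine field there) and that, by Theorem \ref{thcellglob}, the periodic extension solves the homogeneous Stokes system near $\partial Y$, so no separate boundary condition on $\partial Y$ needs to be handled --- your appeal to ``periodic boundary conditions plus Cattabriga/Galdi'' is the same device, slightly less explicitly justified. Your closing caveat is well taken: since the traction of $\bm{\chi}_{\bm{A}}$ is balanced across $\partial T$ only against rigid motions, $\bm{\nabla}\bm{\chi}_{\bm{A}}$ generically jumps there, so the conclusion should be read piecewise, in $H^2(T)^d$ and $H^2(Y\setminus T)^d$, and requires more than Lipschitz smoothness of $\partial T$; the paper's proof is silent on both points. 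That observation, however, does not repair the inadmissible-competitor step, which is what needs fixing.
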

\begin{proof}
The first part follows from the regularity theorem (\cite{temam}, Proposition 2.2), all we have to show is that the boundary values of $\bm{\chi}_{\bm{A}}$ on $\partial T$ and $\partial Y$ are smooth. But the condition $\bm{e}(\bm{\chi}_{\bm{A}})=\bm{0}$ on $T$ implies that $\bm{\chi}_{\bm{A}}\in C^\infty (T)^d$ and {\bf Theorem \ref{thcellglob}} tells us that outside the sets $k+T$, $\bm{\chi}_{\bm{A}}$ is also infinitely differentiable being the solution of the Stokes system with $\bm{0}$ body force.

To obtain the estimates we notice first the linearity of the map $\bm{A}\mapsto\bm{\chi}_{\bm{A}}$ and then the fact that $\bm{A}\mapsto||\bm{\chi}_{\bm{A}}||_{H^2(Y)^d}$ is a norm on $\Re^{d\times d}$.
\end{proof}
Now define the bilinear form $\mc{C}$ on $\mc{S}$ as follows:
\begin{equation}
\label{defC}
\mc{C}[\bm{A},\bm{B}]= \int_Y \mu\bm{e}(\bm{\chi}_{\bm{A}}):\bm{e}(\bm{\chi}_{\bm{B}}) d\bm{y}.
\end{equation}
\subsection{Auxiliary Lemmas}
In this subsection we state and prove several auxiliary results needed to show the convergence results from {\bf Subsection \ref{ssec4.3}}.

In the $\Gamma$-- convergence proof we need first to approximate any vector field from $V$ with a sequence of vector fields from $V^\e$ such that we have also convergence for the functionals. {\bf Lemmas \ref{lemmaextension} -- \ref{lemmasuf}} are preliminary results needed to perform the construction. Our strategy is to use the solution of the local problems and construct first a sequence in $H_0^1(D)^d$ with all the required properties to be satisfied up to some small error, and then using a diagonalization argument in {\bf Lemma \ref{lemmasuf}} to obtain the recovery sequence. In {\bf Lemma \ref{lemmasuf}} we need also to estimate how well one can approximate a vector field from $H^1(D)^d$ with one from $V^\e$, and the result is given in {\bf Lemma \ref{lemmaconstruction}}. The periodic arrangement of the rigid particles is very important here and we take advantage of it using {\bf Lemma \ref{lemmaextension}} which is an extension type of result in the unit cell.
\begin{lemma}
\label{lemmaextension}
Let $\bm{v}\in H^1(T)^d$. Then, there exists a vector field $\tilde{\bm{v}}\in H_0^1(Y)^d$, such that $\bm{e}(\bm{v})=\bm{e}(\tilde{\bm{v}})$ in $T$ and $||\tilde{\bm{v}}||_{H_0^1(Y)^d}\leq C ||\bm{e}(\bm{v})||_{L^2(T)^{d\times d}}$. If $\operatorname{div}\bm{v}\equiv 0$ in $T$, then $\tilde{\bm{v}}$ may be also chosen so that $\operatorname{div}\bm{v}\equiv 0$ in $Y$. 
\end{lemma}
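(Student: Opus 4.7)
The plan is to reduce to a standard extension-plus-cutoff argument, combined with a Bogovski\u{\i}-type divergence correction on the perforated cell $Y\setminus T$, with Korn's inequality providing the crucial bound in terms of $\bm{e}(\bm{v})$ rather than the full $H^1$ norm.

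First I would invoke Korn's second inequality on the bounded connected Lipschitz domain $T$ to find a rigid-body motion $\bm{r}(\bm{y})=\bm{a}+\bm{B}\bm{y}$ (with $\bm{B}$ antisymmetric, hence $\operatorname{div}\bm{r}=\operatorname{tr}\bm{B}=0$ and $\bm{e}(\bm{r})=\bm{0}$) such that $\bm{w}:=\bm{v}-\bm{r}$ satisfies $\|\bm{w}\|_{H^1(T)^d}\le C\|\bm{e}(\bm{v})\|_{L^2(T)^{d\times d}}$. Note that $\bm{e}(\bm{w})=\bm{e}(\bm{v})$ and $\operatorname{div}\bm{w}=\operatorname{div}\bm{v}$ in $T$, so it suffices to construct $\tilde{\bm{v}}$ for $\bm{w}$ in place of $\bm{v}$.

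Next I would apply the standard Sobolev extension theorem on the Lipschitz domain $T$ to obtain $W\in H^1(Y)^d$ with $W|_T=\bm{w}$ and $\|W\|_{H^1(Y)^d}\le C\|\bm{w}\|_{H^1(T)^d}$. Since $T\subset\subset Y$, pick a cutoff $\eta\in C_c^\infty(Y)$ with $0\le\eta\le 1$ and $\eta\equiv 1$ on an open neighborhood of $T$. Then $\bm{V}:=\eta W\in H_0^1(Y)^d$, $\bm{e}(\bm{V})=\bm{e}(\bm{v})$ in $T$, and the $H^1_0$-bound follows by Leibniz and chaining the previous two inequalities. In the non-divergence-free case we simply set $\tilde{\bm{v}}:=\bm{V}$.

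If instead $\operatorname{div}\bm{v}\equiv 0$ in $T$, then $\operatorname{div}\bm{w}\equiv 0$ in $T$, and by choice of $\eta$ we have $\operatorname{div}\bm{V}\equiv 0$ on the neighborhood of $T$ where $\eta=1$. Set $f:=\operatorname{div}\bm{V}\in L^2(Y)$; it vanishes in $T$ and, since $\bm{V}$ is compactly supported in $Y$, $\int_{Y\setminus T}f\,d\bm{y}=\int_Y f\,d\bm{y}=0$ by the divergence theorem. Because $Y\setminus T$ is a bounded Lipschitz domain, the Bogovski\u{\i} operator produces $\bm{z}\in H_0^1(Y\setminus T)^d$ with $\operatorname{div}\bm{z}=f$ in $Y\setminus T$ and $\|\bm{z}\|_{H^1(Y\setminus T)^d}\le C\|f\|_{L^2(Y\setminus T)}\le C\|\bm{e}(\bm{v})\|_{L^2(T)^{d\times d}}$. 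Extending $\bm{z}$ by zero inside $T$ yields an element of $H_0^1(Y)^d$ (its trace vanishes on $\partial T$ and on $\partial Y$), and $\tilde{\bm{v}}:=\bm{V}-\bm{z}$ now satisfies $\tilde{\bm{v}}=\bm{V}$ on $T$ (so $\bm{e}(\tilde{\bm{v}})=\bm{e}(\bm{v})$ there) and $\operatorname{div}\tilde{\bm{v}}\equiv 0$ throughout $Y$, with the required norm estimate.

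The main obstacle is the divergence-free case. The subtle point is arranging that the Bogovski\u{\i} correction is supported away from $T$ and has zero trace on $\partial T$; this is why one needs $\eta\equiv 1$ on a full neighborhood of $T$ (not merely on $T$), which guarantees that $f=\operatorname{div}\bm{V}$ vanishes on an open set containing $T$ and hence that the correction lives naturally in $H_0^1(Y\setminus T)^d$. The verification that $Y\setminus T$ admits the Bogovski\u{\i} operator with the claimed bound—a classical fact for bounded Lipschitz domains—is what makes the zero-extension of $\bm{z}$ into $T$ land back in $H_0^1(Y)^d$ and closes the argument.
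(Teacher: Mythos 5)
Your proposal is correct and follows essentially the same route as the paper: subtract the rigid part so the extension is controlled by $\|\bm{e}(\bm{v})\|_{L^2(T)^{d\times d}}$ (the paper packages this Korn-type bound as a decomposition of $H^1(T)^d$ into the kernel of $\bm{e}$ and a complement plus the Open Mapping Theorem, which is equivalent to your use of Korn's second inequality), then repair the divergence on $Y\setminus T$ with a zero-boundary-value divergence solver (the paper solves a Stokes system with prescribed divergence, which is the same device as your Bogovski\u{\i} operator). Your explicit verification of the compatibility condition $\int_{Y\setminus T}\operatorname{div}\bm{V}\,d\bm{y}=0$ is a nice touch that the paper leaves implicit.
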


\begin{proof}
Let $V_1$ be the closed subspace of $H^1(T)^d$ that consists of vector fields $\bm{v}_1$ such that $\bm{e}(\bm{v}_1)=\bm{0}$ almost everywhere in $T$, and $V_2$ the complement of $V_1$ in $H^1(T)^d$. On $V_2$, we consider the norms $||\cdot||_1$ and $||\cdot||_2$ given by:
$$||\bm{v}_2||_1=||\bm{v}_2||_{H^1(T)^d}$$
and
$$||\bm{v}_2||_2=\left(\int_T\bm{e}(\bm{v}_2):\bm{e}(\bm{v}_2)d\bm{y}\right)^{1/2}.$$
The norm $||\cdot||_1$ is the norm induced and $||\cdot||_2$ is a norm as a consequence of the definition of $V_2$. Obviously we have that $||\bm{v}_2||_2\leq||\bm{v}_2||_1$ for every $\bm{v}_2\in V_2$, so as a consequence of the Open Mapping Theorem, there exists a constant $C$ such that for every $\bm{v}_2\in V_2$ we have
$$||\bm{v}_2||_1\leq C ||\bm{v}_2||_2.$$
Consider $\bm{v}\in H^1(T)^d$, and let $\bm{v}=\bm{v}_1+\bm{v}_2$ with $\bm{v}_i\in V_i$ for $i=1,2$. Let $\tilde{\bm{v}}_2$ be an extension of $\bm{v}_2$, $\tilde{\bm{v}}_2\in H_0^1(Y)^d$, and $||\tilde{\bm{v}}_2||_{H_0^1(Y)^d}\leq C ||\bm{v}_2||_1$. We define then $\tilde{\bm{v}}$ to be $\tilde{\bm{v}}_2$, so in $T$ we have
$$\bm{e}(\tilde{\bm{v}})=\bm{e}(\tilde{\bm{v}}_2)=\bm{e}(\bm{v}_2)=\bm{e}(\bm{v}_1+\bm{v}_2)=\bm{e}(\bm{v})$$
and
$$||\tilde{\bm{v}}||_{H_0^1(Y)^d}\leq C ||\bm{v}_2||_1\leq C||\bm{v}_2||_2=C\left(\int_T\bm{e}(\bm{v}):\bm{e}(\bm{v})d\bm{y}\right)^{1/2}.$$
Now, if $\operatorname{div}\bm{v}\equiv 0$ in $T$, let $\tilde{\bm{v}}_2$ be the extension obtained as in the previous case, and consider the weak solution of the following Stokes system:
\begin{equation}\nonumber
\left\{
\begin{array}{rll}
-\Delta\bm{w}+\bm{\nabla} p=&\bm{0} \ & \mbox{ in } Y\setminus T\\
\operatorname{div}\bm{w}=&\operatorname{div}\tilde{\bm{v}}_2 \ & \mbox{ in } Y\setminus T\\
\bm{w}=&\bm{0} \ & \mbox{ on } \partial Y\cup\partial T
\end{array}
\right.
\end{equation}
that satisfies the estimate
$$||\bm{w}||_{H_0^1(Y\setminus T)^d}\leq C ||\operatorname{div}\tilde{\bm{v}}_2||_{L^2(Y\setminus T)^d}.$$
We take now $\tilde{\bm{v}}$ to be $\tilde{\bm{v}}_2-\bm{w}$ and $\tilde{\bm{v}}$ obviously satisfies the three properties.
\end{proof}

\begin{lemma}
\label{lemmaconstruction}
Let $\bm{u}\in H_0^1(D)^d$. Then, there exists a vector field $ {\bm{u'}}\in V^\e$ with the property that $||\bm{u}- {\bm{u'}}||_{H_0^1(D)^d}\leq C\left(||\operatorname{div}\bm{u}||_{L^2(D)}+||\bm{e}(\bm{u})||_{L^2(D_r^\e)^{d\times d}}\right)$,
where the constant $C$ is independent of $\e$.
\end{lemma}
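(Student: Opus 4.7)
My plan is to build $\bm{u}'$ in two successive corrections of $\bm{u}$: first, make $\bm{u}$ divergence-free globally; second, kill its strain rate on each particle $T_k^\e$ by a locally-supported perturbation obtained from a rescaling of Lemma~\ref{lemmaextension}. The key point is that because the particles sit one per cell and the cells $\e(k+Y)$ are essentially disjoint, the local perturbations can be summed with no interaction, and the periodic structure together with the right scaling will produce a constant $C$ independent of $\e$.

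Concretely, first I would solve a Bogovski\u{\i}-type problem on $D$: find $\bm{w}\in H_0^1(D)^d$ with $\operatorname{div}\bm{w}=\operatorname{div}\bm{u}$ on $D$ and $\|\bm{w}\|_{H_0^1(D)^d}\le C\|\operatorname{div}\bm{u}\|_{L^2(D)}$. Setting $\bm{u}_1:=\bm{u}-\bm{w}$, we have $\operatorname{div}\bm{u}_1=0$ in $D$, and by the triangle inequality $\|\bm{e}(\bm{u}_1)\|_{L^2(D_r^\e)^{d\times d}}\le \|\bm{e}(\bm{u})\|_{L^2(D_r^\e)^{d\times d}}+C\|\operatorname{div}\bm{u}\|_{L^2(D)}$.

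Next, for each $k\in N^\e$ I would define the pulled-back field $\bm{v}_k(\bm{y})=\e^{-1}\bm{u}_1(\e k+\e\bm{y})$ on $T$. A quick chain-rule check gives $\bm{e}_y(\bm{v}_k)(\bm{y})=\bm{e}_x(\bm{u}_1)(\e k+\e\bm{y})$ and $\operatorname{div}_y\bm{v}_k=\operatorname{div}_x\bm{u}_1=0$ in $T$. Applying Lemma~\ref{lemmaextension} to $\bm{v}_k$ produces $\tilde{\bm{v}}_k\in H_0^1(Y)^d$ with $\bm{e}(\tilde{\bm{v}}_k)=\bm{e}(\bm{v}_k)$ in $T$, $\operatorname{div}\tilde{\bm{v}}_k=0$ in $Y$, and $\|\tilde{\bm{v}}_k\|_{H_0^1(Y)^d}\le C\|\bm{e}(\bm{v}_k)\|_{L^2(T)^{d\times d}}$. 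Pushing forward via $\bm{w}_k(\bm{x}):=\e\tilde{\bm{v}}_k((\bm{x}-\e k)/\e)$ on $\e(k+Y)$ and extending by zero, I obtain $\bm{w}_k\in H_0^1(D)^d$ supported in $\e(k+Y)$, divergence-free, satisfying $\bm{e}(\bm{w}_k)=\bm{e}(\bm{u}_1)$ on $T_k^\e$, and the scaling change of variables gives
\begin{equation*}
\|\bm{w}_k\|_{H_0^1(\e(k+Y))^d}^2 \le C\e^d\|\tilde{\bm{v}}_k\|_{H_0^1(Y)^d}^2 \le C\|\bm{e}(\bm{u}_1)\|_{L^2(T_k^\e)^{d\times d}}^2,
\end{equation*}
with $C$ independent of $\e$ and $k$ by the periodicity of the microstructure.

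Finally I would set $\bm{u}':=\bm{u}_1-\sum_{k\in N^\e}\bm{w}_k$. Because the supports $\e(k+Y)$ are pairwise non-overlapping (they meet only on measure-zero cell faces where $\bm{w}_k$ vanishes), $\bm{u}'\in H_0^1(D)^d$, is divergence-free on $D$, and has $\bm{e}(\bm{u}')=0$ on every $T_k^\e$, so $\bm{u}'\in V^\e$. Adding $\|\bm{u}-\bm{u}_1\|_{H_0^1(D)^d}^2$ and the disjoint-support sum $\sum_k\|\bm{w}_k\|_{H_0^1(\e(k+Y))^d}^2$ yields
\begin{equation*}
\|\bm{u}-\bm{u}'\|_{H_0^1(D)^d}^2 \le C\|\operatorname{div}\bm{u}\|_{L^2(D)}^2 + C\sum_{k\in N^\e}\|\bm{e}(\bm{u}_1)\|_{L^2(T_k^\e)^{d\times d}}^2,
\end{equation*}
which after substituting the bound on $\bm{e}(\bm{u}_1)$ gives the required estimate. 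The main technical point is verifying that the scaling of Lemma~\ref{lemmaextension} really produces an $\e$-independent constant; this is exactly what the periodic arrangement buys, since the local extension problem on each cell is a single fixed problem on $(T,Y)$ rescaled by $\e$, and the $H_0^1$-scaling absorbs the correct powers of $\e$.
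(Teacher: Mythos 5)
Your proposal is correct and follows essentially the same two-step construction as the paper: first a global divergence correction with the estimate $C\|\operatorname{div}\bm{u}\|_{L^2(D)}$ (the paper realizes it via a Stokes solve, you via a Bogovski\u{\i}-type operator, which is equivalent for this purpose), then a cell-by-cell application of the rescaled Lemma~\ref{lemmaextension} on each $\e(k+Y)$, summed over disjoint supports. Your explicit pull-back/push-forward computation correctly reproduces the $\e$-independent scaling that the paper states as the rescaled estimate $\|\bm{u}_{2,k}\|_{L^2}+\e\|\bm{\nabla}\bm{u}_{2,k}\|_{L^2}\leq C\e\|\bm{e}(\bm{u}-\bm{u}_1)\|_{L^2(\e k+\e T)}$.
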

\begin{proof}
We first construct $\bm{u}_1\in H_0^1(D)^d$ as the solution for the Stokes system
\begin{equation}\nonumber
\left\{
\begin{array}{rll}
-\Delta\bm{u}_1+\bm{\nabla} p_1=&\bm{0} \ & \mbox{ in } D\\
\operatorname{div}\bm{u}_1=&\operatorname{div}\bm{u} \ & \mbox{ in } D\\
\bm{u}_1=&\bm{0} \ & \mbox{ on } \partial D
\end{array}
\right.
\end{equation}
The solution $\bm{u}_1$ satisfies the estimate
$$||\bm{u}_1||_{H_0^1(D)^d}\leq C ||\operatorname{div}\bm{u}||_{L^2(D)},$$
and now the vector field $\bm{u}-\bm{u}_1$ is divergence free and satisfies
\begin{equation}\nonumber
\begin{split}
||\bm{e}(\bm{u})-\bm{e}(\bm{u}_1)||_{L^2(D_r^\e)^{d\times d}}&\leq C\left(||\bm{e}(\bm{u})||_{L^2(D_r^\e)^{d\times d}}+||\bm{u}_1||_{H_0^1(D)^d}\right)\\
&\leq C\left(||\bm{e}(\bm{u})||_{L^2(D_r^\e)^{d\times d}}+||\operatorname{div}\bm{u}||_{L^2(D)}\right).
\end{split}
\end{equation}
For the divergence free vector field $(\bm{u}-\bm{u}_1)_{|\e k+\e T}$ we apply a rescaled version of {\bf Lemma \ref{lemmaextension}} and obtain the divergence free vector field $\bm{u}_{2,k}\in H_0^1(\e k+\e Y)$, such that
$$\bm{e}(\bm{u}_{2,k})=\bm{e}(\bm{u}-\bm{u}_1)\ \mbox{ in } \e k+\e T$$
and
$$||\bm{u}_{2,k}||_{L^2(\e k+\e Y)^d}+\e ||\bm{\nabla}\bm{u}_{2,k}||_{L^2(\e k+\e Y)^{d\times d}}\leq C \e ||\bm{e}(\bm{u}-\bm{u}_1)||_{L^2(\e k+\e T)^{d\times d}} \ ,$$
where the constant $C$ depends only on $T$.\\
We form the sum of all $\bm{u}_{2,k}$ for all $k\in N^\e$ and obtain
$\bm{u}_2=\displaystyle\sum_{k\in N^\e}\bm{u}_{2,k}$, with $\bm{u}_2\in H_0^1(D)^d$ and
$$||\bm{u}_2||_{H_0^1(D)^d}\leq C ||\bm{e}(\bm{u}-\bm{u}_1)||_{L^2(D_r^\e)^{d\times d}}.$$
Take $ {\bm{u'}}=\bm{u}-\bm{u}_1-\bm{u}_2$ and see that
$$||\bm{u}- {\bm{u'}}||_{H_0^1(D)^d}\leq ||\bm{u}_1||_{H_0^1(D)^d}+||\bm{u}_2||_{H_0^1(D)^d}\leq C (||\operatorname{div}\bm{u}||_{L^2(D)}+||\bm{e}(\bm{u}-\bm{u}_1)||_{L^2(D_r^\e)^{d\times d}})$$
$$\leq C(||\bm{e}(\bm{u})||_{L^2(D_r^\e)^{d\times d}}+||\operatorname{div}\bm{u}||_{L^2(D)}),$$
and by construction, $ {\bm{u'}}$ is divergence free and $\bm{e}({\bm{u'}})=\bm{0}$ almost everywhere on $D_r^\e$, hence $ {\bm{u'}}\in V^\e$.
\end{proof}

\begin{lemma}
\label{lemmasuf}
Let $\bm{u}\in V$ and $\omega\in \Omega$. Assume that for any $\delta>0$ there exists a sequence $\bm{u}^\e_\delta\in H_0^1(D)^d$ weakly convergent to $\bm{u}$ as $\e\to 0$ such that
\begin{equation}
\label{c1}
\begin{split}
\limsup_{\delta\to 0}\limsup_{\e\to 0}\displaystyle\int_D(\operatorname{div}\bm{u}^\e_\delta)^2 d\bm{x}=\limsup_{\delta\to 0}\limsup_{\e\to 0}\int_{D_r^\e}\bm{e}(\bm{u}^\e_\delta):\bm{e}(\bm{u}^\e_\delta) d\bm{x}\\
=\limsup_{\delta\to 0}\limsup_{\e\to 0}|E^\e_\omega(\bm{u}^\e_\delta)-L|=0,
\end{split}
\end{equation}
for some $L\in\Re$. Then there exists a sequence $\bm{u}^\e\in V^\e$, weakly convergent to $\bm{u}$ as $\e\to 0$ such that $E^\e_\omega(\bm{u}^\e)\to I$ as $\e\to 0$.
\end{lemma}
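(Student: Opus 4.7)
The approach is a three-step diagonal argument: project $\bm{u}^\e_\delta$ onto $V^\e$, show the functional changes little under this projection, and then extract a diagonal sequence. First, I will correct each $\bm{u}^\e_\delta$ to an element of $V^\e$ by invoking Lemma \ref{lemmaconstruction}: it produces $\bm{v}^{\e,\delta}\in V^\e$ with
$$\|\bm{u}^\e_\delta - \bm{v}^{\e,\delta}\|_{H_0^1(D)^d} \le C\bigl(\|\operatorname{div}\bm{u}^\e_\delta\|_{L^2(D)} + \|\bm{e}(\bm{u}^\e_\delta)\|_{L^2(D_r^\e)^{d\times d}}\bigr).$$
Hypothesis (\ref{c1}) makes the right-hand side vanish in the iterated limit $\limsup_{\delta\to 0}\limsup_{\e\to 0}$, so for each fixed $\delta$ the sequence $\bm{v}^{\e,\delta}$ still converges weakly to $\bm{u}$ in $H_0^1(D)^d$ as $\e\to 0$.

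Second, I will show that $E^\e_\omega(\bm{v}^{\e,\delta}) - E^\e_\omega(\bm{u}^\e_\delta)\to 0$ in the same iterated limit, splitting the difference into viscous dissipation, body force, and interfacial contributions. The first two pieces are continuous under the strong $H^1$ perturbation constructed in Step 1, using the uniform-in-$\e$ bound $\|\bm{u}^\e_\delta\|_{H^1}\le C_\delta$ from weak convergence and $\|\bm{f}^\e\|_{L^2}\le C$. The interfacial piece is the main difficulty: (H5) bounds the integrand pointwise by $C\,a(s/\e,\tau_k\omega)\,|\bm{v}^{\e,\delta}-\bm{u}^\e_\delta|^\gamma$, and then H\"older's inequality with conjugate exponents $\tfrac{2}{2-\gamma}$ and $\tfrac{2}{\gamma}$ yields the estimate
$$\Bigl(\sum_{k\in N^\e}\int_{\partial T^\e_k}\e\, a(s/\e,\tau_k\omega)^{\frac{2}{2-\gamma}}\,ds\Bigr)^{\frac{2-\gamma}{2}} \Bigl(\sum_{k\in N^\e}\int_{\partial T^\e_k}\e\, |\bm{v}^{\e,\delta}-\bm{u}^\e_\delta|^{2}\,ds\Bigr)^{\frac{\gamma}{2}}.$$
The first factor is bounded a.s.\ in $\e$ by the pointwise ergodic theorem applied to $\omega\mapsto \int_{\partial T}a(s,\omega)^{2/(2-\gamma)}\,ds\in L^1(\Omega)$. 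The second factor is controlled via the scaled cellwise trace inequality $\sum_{k\in N^\e}\e\int_{\partial T^\e_k}|\bm{w}|^2\,ds \le C\|\bm{w}\|_{H^1(D)^d}^2$ (obtained by rescaling the standard trace inequality on $Y$), and therefore vanishes in the iterated limit by virtue of Step 1.

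Third, I will diagonalize. Steps 1--2 give $\bm{v}^{\e,\delta}\rightharpoonup\bm{u}$ in $H_0^1(D)^d$ and $E^\e_\omega(\bm{v}^{\e,\delta})\to L$ in the iterated limit $\delta\to 0$ after $\e\to 0$. A standard Attouch-type diagonal extraction, applied after metrizing the weak topology on bounded balls of $H_0^1(D)^d$, produces $\delta(\e)\to 0$ such that $\bm{u}^\e:= \bm{v}^{\e,\delta(\e)}\in V^\e$ satisfies both $\bm{u}^\e\rightharpoonup\bm{u}$ and $E^\e_\omega(\bm{u}^\e)\to L$, which is the desired sequence (the ``$I$'' in the statement being evidently a typo for $L$). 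The hardest part will be the interfacial term in Step 2: the random oscillating weight (requiring the pointwise ergodic theorem) and the vanishing $H^1$-perturbation (requiring the scaled trace inequality) have to be combined through the fractional exponent $\gamma$ coming from (H5), and this is exactly what forces the H\"older splitting above.
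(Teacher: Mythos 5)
Your proposal is correct and follows essentially the same route as the paper: apply Lemma \ref{lemmaconstruction} to correct $\bm{u}^\e_\delta$ into $V^\e$, use (\ref{c1}) to see that the correction vanishes in the iterated limit, and extract a diagonal sequence $\delta(\e)$ by the Attouch-type argument. Your Step 2 merely makes explicit the continuity of $E^\e_\omega$ under vanishing strong $H^1$ perturbations (your H\"older--trace--ergodic estimate for the interfacial term is exactly the content of the paper's Lemma \ref{lemmaestg}), a point the paper's own proof leaves implicit.
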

\begin{proof}
For each vector field $\bm{u}^\e_\delta$ let $ {\bm{u'}}_\delta^\e$ be the vector field in $V^\e$ given by {\bf Lemma \ref{lemmaconstruction}} that satisfies
$$||\bm{u}^\e_\delta- {{\bm{u'}}}^\e_\delta||_{H_0^1(D)^d}\leq C\left(||\operatorname{div}\bm{u}^\e_\delta||_{L^2(D)}+||\bm{e}(\bm{u}^\e_\delta)||_{L^2(D_r^\e)^{d\times d}}\right).$$
Condition (\ref{c1}) implies that
$$\limsup_{\delta\to 0}\limsup_{\e\to 0} ||\bm{u}^\e_\delta- {{\bm{u'}}}^\e_\delta||_{H_0^1(D)^d}=0,$$
and using a diagonalization argument (\cite{attouch}, Corollary 1.18) we find a sequence $\delta(\e)\to 0$ such that
$$\lim_{\e\to 0} ||\bm{u}^\e_{\delta(\e)}- {\bm{u'}}^\e_{\delta(\e)}||_{H_0^1(D)^d}=0.$$
Then, the sequence $\bm{u}^\e= {\bm{u'}}^\e_{\delta(\e)}$ is the one we are looking for. 
\end{proof}
In the second part of the $\Gamma$-- convergence result we need to show that the sequence constructed is optimal, i.e. the limit given for any other sequence is greater, and we will use the next lemma to prove this.
\begin{lemma}
\label{lemmanec}
Let $\bm{A}\in\mc{S}$ with zero trace and let $\bm{w}^\e=\bm{A}x-\e\bm{\chi}_ {\bm{A}} \left(\dfrac{\cdot}{\e}\right)$ extended to the whole $D$ by periodicity. Then, for every sequence $\bm{u}^\e\in V^\e$: $\bm{u}^\e \rightharpoonup\bm{0}$ as $\e\to 0$, one has
$$\limsup_{\e\to 0} \int_{D}\phi\bm{e}(\bm{w}^\e):\bm{e}(\bm{u}^\e)d\bm{x}=0,$$
where $\phi\in W^{1,\infty}(D)$ compactly supported in $D$.
\end{lemma}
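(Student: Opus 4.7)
The plan is to decompose $\bm{e}(\bm{w}^\e)$ into a slowly varying piece and a zero-mean oscillatory piece and to handle each by a different mechanism. A direct computation gives $\bm{e}(\bm{w}^\e)(\bm{x}) = \bm{A}-\bm{e}(\bm{\chi}_{\bm{A}})(\bm{x}/\e)$, which vanishes on $D_r^\e$ because $\bm{e}(\bm{\chi}_{\bm{A}})=\bm{A}$ on $T$, and whose weak $L^2$ limit is $\bm{A}$ since $\int_Y\bm{e}(\bm{\chi}_{\bm{A}})\,d\bm{y}=\bm{0}$ by periodicity. For $\e$ small enough that every cell intersecting $\mathrm{supp}\,\phi$ is interior to $D$, I write
\[
\bm{e}(\bm{w}^\e) \;=\; \bm{A}\,\mathds{1}_{D_f^\e} \,+\, \bm{G}(\cdot/\e), \qquad \bm{G}(\bm{y}):=\bm{A}\,\mathds{1}_T(\bm{y})-\bm{e}(\bm{\chi}_{\bm{A}})(\bm{y}),
\]
and split the integral of interest as $I^\e_A+I^\e_G$.

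For $I^\e_A$: since $\bm{e}(\bm{u}^\e)=\bm{0}$ on $D_r^\e$ the indicator $\mathds{1}_{D_f^\e}$ can be replaced by $1$; the symmetry of $\bm{A}$ and an integration by parts (no boundary term because $\phi$ is compactly supported) then give $I^\e_A = -\int_D(\bm{A}\nabla\phi)\cdot\bm{u}^\e\,d\bm{x}$, which tends to zero since $\bm{u}^\e\rightharpoonup \bm{0}$ in $H_0^1(D)^d$ implies $\bm{u}^\e\to\bm{0}$ strongly in $L^2(D)^d$ by Rellich-Kondrachov. For $I^\e_G$, I use $\phi\bm{e}(\bm{u}^\e)=\bm{e}(\phi\bm{u}^\e)-\mathrm{sym}(\nabla\phi\otimes\bm{u}^\e)$; the term involving $\mathrm{sym}(\nabla\phi\otimes\bm{u}^\e)$ is $o(1)$ by Cauchy-Schwarz because $\bm{G}(\cdot/\e)$ is uniformly bounded in $L^2(D)^{d\times d}$ by periodic scaling and $\bm{u}^\e\to\bm{0}$ in $L^2$. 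To treat the remaining piece $\int_D\bm{G}(\cdot/\e):\bm{e}(\phi\bm{u}^\e)\,d\bm{x}$, I apply \textbf{Lemma \ref{lemmaconstruction}} to $\phi\bm{u}^\e\in H_0^1(D)^d$, producing $\bm{v}^\e\in V^\e$ with
\[
\|\phi\bm{u}^\e-\bm{v}^\e\|_{H_0^1(D)^d} \le C\bigl(\|\operatorname{div}(\phi\bm{u}^\e)\|_{L^2(D)} + \|\bm{e}(\phi\bm{u}^\e)\|_{L^2(D_r^\e)^{d\times d}}\bigr).
\]
Since $\operatorname{div}(\phi\bm{u}^\e)=\nabla\phi\cdot\bm{u}^\e$ and, on $D_r^\e$ where $\bm{e}(\bm{u}^\e)=\bm{0}$, $\bm{e}(\phi\bm{u}^\e)=\mathrm{sym}(\nabla\phi\otimes\bm{u}^\e)$, both right-hand norms are $O(\|\nabla\phi\|_\infty\|\bm{u}^\e\|_{L^2(D)})=o(1)$. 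Thus $\phi\bm{u}^\e-\bm{v}^\e\to\bm{0}$ strongly in $H_0^1(D)^d$; replacing $\phi\bm{u}^\e$ by $\bm{v}^\e$ costs only $o(1)$ by Cauchy-Schwarz, while $\int_D\bm{G}(\cdot/\e):\bm{e}(\bm{v}^\e)\,d\bm{x}=0$ identically by the rescaled form of \textbf{Theorem \ref{thcellglob}} applied to the zero-extension of $\bm{v}^\e$ to $\Re^d$, which is divergence-free, of bounded support, and satisfies $\bm{e}(\bm{v}^\e)\equiv\bm{0}$ on every translate $\e(k+T)$.

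The main technical point I expect to have to address is the behaviour near $\partial D$: for indices $k\notin N^\e$ with $\e(k+T)\cap D\neq\emptyset$, the zero extension of $\bm{v}^\e$ need not satisfy $\bm{e}(\bm{v}^\e)\equiv\bm{0}$ on the whole cell $\e(k+T)$, so Theorem \ref{thcellglob} is not literally applicable. This is circumvented using the compact support of $\phi$: fix an intermediate domain $D'$ with $\mathrm{supp}\,\phi\subset\subset D'\subset\subset D$ and perform the construction of Lemma \ref{lemmaconstruction} on $D'$ rather than $D$, so that for all sufficiently small $\e$ the support of $\bm{v}^\e$ is contained in $D'$ and every cell meeting that support is strictly interior to $D$, restoring the hypotheses of Theorem \ref{thcellglob}.
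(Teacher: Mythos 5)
Your argument follows the paper's proof almost step for step: use the product rule to move $\phi$ inside the strain so the problem becomes estimating $\int_D\bm{e}(\bm{w}^\e):\bm{e}(\phi\bm{u}^\e)\,d\bm{x}$, project $\phi\bm{u}^\e$ onto $V^\e$ via Lemma \ref{lemmaconstruction} at a cost that is $o(1)$ because $\operatorname{div}(\phi\bm{u}^\e)$ and $\bm{e}(\phi\bm{u}^\e)|_{D_r^\e}$ are controlled by $\|\nabla\phi\|_\infty\|\bm{u}^\e\|_{L^2}$, then invoke Theorem \ref{thcellglob} to kill the leading term. Your explicit split $\bm{e}(\bm{w}^\e)=\bm{A}\mathds{1}_{D_f^\e}+\bm{G}(\cdot/\e)$ and the separate integration by parts against the constant $\bm{A}$ just unpack what the paper compresses into the single assertion $\int_D\bm{e}(\bm{w}^\e):\bm{e}(\bm{v}^\e)\,d\bm{x}=0$.

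On the boundary concern you raise: you are right that the paper applies Theorem \ref{thcellglob} to the zero extension of $\bm{v}^\e$ without checking rigidity on cells $\e(k+T)$ with $k\notin N^\e$ that still meet $D$, and this point is passed over silently. However, your proposed fix — run Lemma \ref{lemmaconstruction} on an intermediate domain $D'\subset\subset D$ — does not actually restore the hypotheses: a cell $\e(k+T)$ that straddles $\partial D'$ can lie entirely inside $D$ yet is not among the rigid cells of $D'$, so a $\bm{v}^\e$ built in $V^\e(D')$ carries no rigidity constraint on it, and the same obstruction reappears one layer in. What would close this (in both your version and the paper's) is noting that on such boundary-layer cells $\phi\bm{u}^\e\equiv\bm{0}$ by the compact support of $\phi$, so $\bm{v}^\e$ there coincides with the $H^1$-small remainder produced by Lemma \ref{lemmaconstruction}; since you do not make that observation, your proof carries the same unaddressed step as the paper's rather than a repaired one.
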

\begin{proof}
We have that
$$\bm{e}(\bm{w}^\e):\bm{e}(\phi\bm{u}^\e)=\phi\bm{e}(\bm{w}^\e):\bm{e}(\bm{u}^\e)+\displaystyle\sum_{i,j=1}^d e_{ij}(\bm{w}^\e)\left(\dfrac{\partial\phi}{\partial x_i}\bm{u}^\e_j+\dfrac{\partial\phi}{\partial x_j}\bm{u}^\e_i\right),$$
then
$$\limsup_{\e\to 0} \int_{U}\phi\bm{e}(\bm{w}^\e):\bm{e}(\bm{u}^\e)d\bm{x}=\limsup_{\e\to 0} \int_{U}\bm{e}(\bm{w}^\e):\bm{e}(\phi\bm{u}^\e)d\bm{x}.$$
The sequence $\phi\bm{u}^\e$ is weakly convergent to $\bm{0}$ and 
$$||\operatorname{div}\phi\bm{u}^\e||_{L^2(D)}\leq C||\bm{\nabla} \phi||_{L^\infty(D)^d}||\bm{u}^\e||_{L^2(D)^d},$$
$$\int_{D_r^\e}\bm{e}(\phi\bm{u}^\e):\bm{e}(\phi\bm{u}^\e) d\bm{x}\leq C ||\bm{\nabla} \phi||^2_{L^\infty(D)^d} ||\bm{u}^\e||^2_{L^2(D)^d}.$$
For every $\phi\bm{u}^\e$ we apply {\bf Lemma \ref{lemmaconstruction}} and construct $\bm{v}^\e \in V^\e$ such that
$$||\phi\bm{u}^\e-\bm{v}^\e||_{H_0^1(D)^d}\leq C\left(||\operatorname{div}\bm{u}||_{L^2(D)}+||\bm{e}(\bm{u})||_{L^2(D_r^\e)^{d\times d}}\right)\leq C||\bm{\nabla} \phi||_{L^\infty(D)^d}||\bm{u}^\e||_{L^2(D)^d},$$
with a constant $C$ independent of $\e$.
We notice that due to {\bf Theorem \ref{thcellglob}}
$$\int_{D}\bm{e}(\bm{w}^\e):\bm{e}(\bm{v}^\e)d\bm{x}=0,$$
and using the strong convergences to $\bm{0}$ of $\phi\bm{u}^\e-\bm{v}^\e$ we obtain that
$$\limsup_{\e\to 0} \int_{D}\phi\bm{e}(\bm{w}^\e):\bm{e}(\bm{u}^\e)d\bm{x}=\limsup_{\e\to 0} \int_{D}\bm{e}(\bm{w}^\e):\bm{e}(\phi\bm{u}^\e)d\bm{x}=0.$$
\end{proof}

We derive in the next lemma an important estimate satisfied by the functions $g^\e_k$ used in the definition (\ref{deff^e_k}) of the random superficial forces.

\begin{lemma}
\label{lemmaestg}
Let the functions $g^\e_k:\partial T^\e_k\times\Re^d\times\Omega\to \Re$ be given by (\ref{defg^e_k}). Then for every $\bm{u},\bm{v}\in H^1(D)^d$ the following estimate holds: 
\begin{equation}
\label{estg}
\int_{\partial D^\e_r} g^\e_k\left(\bm{x},\bm{u},\omega\right)ds-\int_{\partial D^\e_r} g^\e_k\left(\bm{x},\bm{v},\omega\right)ds\leq C^\e(\omega)\left(||\bm{u}-\bm{v}||_{L^{2}(D)^d}^\gamma+\e^{\gamma}||D\bm{u}-D\bm{v}||_{L^{2}(D)^{d\times d}}^\gamma\right),
\end{equation}
where there exists a constant $C$ such that $C^\e(\omega)\to C$ a.s. $\omega\in\Omega$ as $\e\to 0$ and $\gamma$ is the H\"{o}lder constant given by (\ref{h5}).
\end{lemma}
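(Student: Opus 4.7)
The plan is to reduce the left-hand side to a sum of cell-wise integrals, apply H\"older and a trace inequality on each scaled cell to produce the $\|\bm{u}-\bm{v}\|_{L^2}$ and $\e\|D(\bm{u}-\bm{v})\|_{L^2}$ combination, and finally harness the pointwise Ergodic Theorem to guarantee that the resulting random constant stabilizes as $\e\to 0$.

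First, write $\int_{\partial D^\e_r} g^\e_k(\bm{x},\bm{u},\omega)ds = \sum_{k\in N^\e}\int_{\partial T^\e_k} g^\e_k(\bm{x},\bm{u},\omega)ds$ and apply (H5) pointwise. Setting $\bm{w}=\bm{u}-\bm{v}$, this yields
\[
\int_{\partial T^\e_k}\bigl[g^\e_k(\bm{x},\bm{u},\omega)-g^\e_k(\bm{x},\bm{v},\omega)\bigr]ds
\le C\,\e\!\int_{\partial T^\e_k} a\!\left(\tfrac{\bm{x}}{\e},\tau_k\omega\right)|\bm{w}(\bm{x})|^\gamma ds.
\]
Rescaling by $\bm{x}=\e(k+\bm{z})$ with $\bm{z}\in\partial T$, using $Y$-periodicity of $a$ in its first variable, the right-hand side becomes $C\e^d \int_{\partial T} a(\bm{z},\tau_k\omega)|\bm{w}(\e(k+\bm{z}))|^\gamma ds_{\bm{z}}$. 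Apply H\"older's inequality on $\partial T$ with exponents $\frac{2}{2-\gamma}$ and $\frac{2}{\gamma}$ to separate the random weight from the test quantity:
\[
\int_{\partial T} a\,|\bm{w}|^\gamma ds \le [\Phi_k(\omega)]^{(2-\gamma)/2}\Bigl(\int_{\partial T}|\bm{w}(\e(k+\bm{z}))|^2 ds_{\bm{z}}\Bigr)^{\gamma/2},
\qquad \Phi_k(\omega):=\int_{\partial T} a(\bm{z},\tau_k\omega)^{\frac{2}{2-\gamma}}ds.
\]

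Next, a standard trace inequality on the unit cell, rescaled to $\e(k+Y)$, gives $\int_{\partial T}|\bm{w}(\e(k+\bm z))|^2 ds_{\bm z}\le C\e^{-d}\bigl(\int_{\e(k+Y)}|\bm{w}|^2 d\bm{x}+\e^2\int_{\e(k+Y)}|D\bm{w}|^2 d\bm{x}\bigr)$. Inserting this bound and noting $d(1-\gamma/2)=d(2-\gamma)/2$, the $k$-th contribution is at most $C(\e^d\Phi_k)^{(2-\gamma)/2}A_k^{\gamma/2}$ with $A_k:=\int_{\e(k+Y)}|\bm{w}|^2+\e^2|D\bm{w}|^2 d\bm{x}$. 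Summing over $k\in N^\e$ and applying discrete H\"older with the same conjugate exponents, together with the disjointness of the cells $\e(k+Y)\subset D$, produces
\[
\sum_{k\in N^\e}\!\int_{\partial T^\e_k}\!\bigl[g^\e_k(\bm x,\bm u,\omega)-g^\e_k(\bm x,\bm v,\omega)\bigr]ds
\le C\Bigl(\e^d\sum_{k\in N^\e}\Phi_k(\omega)\Bigr)^{\!(2-\gamma)/2}\!\Bigl(\|\bm w\|_{L^2(D)^d}^2+\e^2\|D\bm w\|_{L^2(D)^{d\times d}}^2\Bigr)^{\!\gamma/2}.
\]
The elementary subadditivity $(s+t)^{\gamma/2}\le s^{\gamma/2}+t^{\gamma/2}$ then splits the last factor into the two terms claimed in \eqref{estg}.

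Finally, we identify the prefactor with the asserted $C^\e(\omega)$. Since $\Phi_k(\omega)=\tau_k\Phi_0(\omega)$ and $\Phi_0\in L^1(\Omega)$ by (H5), the Ergodic Theorem (in the cell form recalled just before the lemma) yields
\[
\e^d\sum_{k\in N^\e}\Phi_k(\omega)\;\xrightarrow[\e\to 0]{}\;|D|\,\mb{E}(\Phi_0\mid\mc{I})\qquad \text{a.s.}~\omega\in\Omega,
\]
so $C^\e(\omega):=C\bigl(\e^d\sum_{k}\Phi_k(\omega)\bigr)^{(2-\gamma)/2}$ converges almost surely to the deterministic constant $C\bigl(|D|\,\mb{E}(\Phi_0\mid\mc{I})\bigr)^{(2-\gamma)/2}$. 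The main bookkeeping hurdle, and the only step that is not a direct invocation of H\"older or the trace inequality, is verifying that the powers of $\e$ generated by the two rescalings ($\e^d$ from surface-to-cube, $\e^{-d\gamma/2}$ from the trace estimate, and $\e^{d\gamma/2}$ absorbed by the discrete H\"older) combine precisely so that the random factor is exactly the ergodic average of $\Phi_k$ and no residual power of $\e$ survives.
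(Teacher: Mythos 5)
Your proof is correct and follows essentially the same route as the paper's: assumption (H5), H\"older's inequality with exponents $\tfrac{2}{2-\gamma}$ and $\tfrac{2}{\gamma}$ on each particle boundary and then discretely over $k$, a rescaled trace/Poincar\'e inequality to pass from $\partial T^\e_k$ to volume integrals, and the pointwise Ergodic Theorem to identify the a.s.\ limit of the random prefactor (which, by ergodicity (T3), is the deterministic constant $\|a\|_{L^{2/(2-\gamma)}(\partial T\times\Omega)}$ up to fixed factors). The only difference is cosmetic bookkeeping — you rescale to the unit cell and apply the trace bound before the discrete H\"older step, whereas the paper applies it at the end — and your accounting of the powers of $\e$ checks out.
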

\begin{proof}
We use first the property (\ref{h5}) and notice that one can assume the function $a$ is nonnegative. We obtain that a.s. $\omega\in\Omega$ we have
\begin{equation}\nonumber
\begin{split}
\int_{\partial D^\e_r} g^\e_k\left(\bm{x},\bm{u},\omega\right)ds-\int_{\partial D^\e_r} g^\e_k\left(\bm{x},\bm{v},\omega\right)ds&=\sum_{k\in N^\e}\int_{\partial T^\e_k} \e\left( g\left(\frac{s}{\e},\bm{u},\tau_k\omega\right)-g\left(\frac{s}{\e},\bm{v},\tau_k\omega\right)\right)ds\\
&\leq \sum_{k\in N^\e} \int_{\partial T^\e_k} \e a\left(\frac{\bm{x}}{\e},\tau_k\omega\right)|\bm{u}-\bm{v}|^\gamma ds.
\end{split}
\end{equation}
Then from H\"older's inequality and the change of variables $\dfrac{\bm{x}}{\e}=s$ we obtain
\begin{equation}\nonumber
\begin{split}
\int_{\partial T^\e_k} a\left(\frac{\bm{x}}{\e},\tau_k\omega\right)|\bm{u}-\bm{v}|^\gamma ds&\leq \left(\int_{\partial T^\e_k} a\left(\frac{\bm{x}}{\e},\tau_k\omega\right)^{\frac{2}{2-\gamma}} ds\right)^{\frac{2-\gamma}{2}}\left(\int_{\partial T^\e_k} |\bm{u}-\bm{v}|^{2} ds\right)^{\frac{\gamma}{2}}\\
&=\e^{\frac{(n-1)(2-\gamma)}{2}}\left(\int_{\partial T} a\left(s,\tau_k\omega\right)^{\frac{2}{2-\gamma}} ds\right)^{\frac{2-\gamma}{2}}\left(\int_{\partial T^\e_k} |\bm{u}-\bm{v}|^{2} ds\right)^{\frac{\gamma}{2}},
\end{split}
\end{equation}
which implies that
\begin{equation}\nonumber
\begin{split}
&\int_{\partial D^\e_r} g^\e_k\left(\bm{x},\bm{u},\omega\right) ds-\int_{\partial D^\e_r} g^\e_k\left(\bm{x},\bm{v},\omega\right) ds\leq \sum_{k\in N^\e} \int_{\partial T^\e_k} \e a\left(\frac{\bm{x}}{\e},\tau_k\omega\right)|\bm{u}-\bm{v}|^\gamma ds\\
&\leq \e^{1+\frac{(n-1)(2-\gamma)}{2}}\sum_{k\in N^\e}\left(\int_{\partial T} a\left(s,\tau_k\omega\right)^{\frac{2}{2-\gamma}}ds \right)^{\frac{2-\gamma}{2}} \left(\int_{\partial T^\e_k} |\bm{u}-\bm{v}|^{2} ds\right)^{\frac{\gamma}{2}}\\
&\leq C \e^{1+\frac{(n-1)(2-\gamma)}{2}}\left(\sum_{k\in N^\e}\int_{\partial T} a\left(s,\tau_k\omega\right)^{\frac{2}{2-\gamma}} ds\right)^{\frac{2-\gamma}{2}}\left(\sum_{k\in N^\e}\int_{\partial T^\e_k} |\bm{u}-\bm{v}|^{2} ds\right)^{\frac{\gamma}{2}}.\\
\end{split}
\end{equation}
We obtain in the end that
\begin{equation}\nonumber
\begin{split}
&\int_{\partial D^\e_r} \e g\left(\frac{s}{\e},\bm{u},\omega\right)ds-\int_{\partial D^\e_r} \e g\left(\frac{s}{\e},\bm{v},\omega\right)ds\leq\\
&C\e^{1+\frac{(n-1)(2-\gamma)}{2}} |N^\e|^{\frac{2-\gamma}{2}}\left(\sum_{k\in N^\e}\frac{1}{|N^\e|}\int_{\partial T} a\left(s,\tau_k\omega\right)^{\frac{2}{2-\gamma}} ds\right)^{\frac{2-\gamma}{2}}\left(\sum_{k\in N^\e}\int_{\partial T^\e_k} |\bm{u}-\bm{v}|^{2} ds\right)^{\frac{\gamma}{2}}\\
&\leq C\e^\frac{\gamma}{2} \left(\sum_{k\in N^\e}\frac{1}{|N^\e|}\int_{\partial T} a\left(s,\tau_k\omega\right)^{\frac{2}{2-\gamma}}ds\right)^{\frac{2-\gamma}{2}}\left(\sum_{k\in N^\e}\int_{\partial T^\e_k} |\bm{u}-\bm{v}|^{2}ds\right)^{\frac{\gamma}{2}}\\
&\leq C\e^{\frac{\gamma}{2}} C^\e(\omega)\left(\sum_{k\in N^\e}\int_{\partial T^\e_k} |\bm{u}-\bm{v}|^{2} ds\right)^{\frac{\gamma}{2}},
\end{split}
\end{equation}
where, by the Ergodic Theorem $C^\e(\omega)\to ||a||_{L^{\frac{2}{2-\gamma}}(\partial T\times \Omega)}$ as $\e\to 0$ a.s. $\omega\in\Omega$, and from the property (H5), $||a||_{L^{\frac{2}{2-\gamma}}(\partial T\times \Omega)}$ is finite.

Applying a rescaled Poincar\'e inequality
$$\int_{\partial T^\e_k} |\bm{u}-\bm{v}|^{2} ds\leq \e^{-1}\left(\int_{T^\e_k}|\bm{u}-\bm{v}|^{2}d\bm{x}+\e^{2}\int_{T^\e_k}|D\bm{u}-D\bm{v}|^{2}d\bm{x}\right),$$
one has
$$\int_{\partial D^\e_r}\e g\left(\frac{s}{\e},\bm{u},\omega\right)ds-\int_{\partial D^\e_r} \e g\left(\frac{s}{\e},\bm{v},\omega\right)ds\leq C^\e(\omega) \left(\int_{D_r^\e}|\bm{u}-\bm{v}|^{2}d\bm{x}+\e^{2}\int_{D_r^\e}|D\bm{u}-D\bm{v}|^{2}d\bm{x}\right)^{\frac{\gamma}{2}},$$
which yield the desired estimate.

In order to prove our main convergence result we need an improved version of the pointwise Ergodic Theorem that we state and prove in the following theorem. This theorem can also be viewed as an extension of the pointwise Ergodic Theorem proved in \cite{marver2} with weights of the form $\displaystyle\dashint_{\e k+\e [0,1]^n}\bm{u}(\bm{x}) d\bm{x}$ added in computing the averages, where $\bm{u}\in L^1(U)$, and $U\subset \Re^d$ is open, bounded with Lipschitz boundary.
\end{proof}
\begin{theorem}
\label{therg}
Assume $1< p\leq +\infty$ and $h:\Re^m\times \Omega\to \Re$ a function such that $\omega\mapsto h(\bm{0},\omega)$ belongs to $L^1(\Omega)$ and for every $\bm{z}_1,\bm{z}_2\in \Re^m$ and a.s. $\omega\in\Omega$

$$h(\bm{z}_1,\omega)-h(\bm{z}_2,\omega)\leq |\bm{z}_1-\bm{z}_s|^\frac{p-1}{p} a(\omega),$$
for a function $a\in L^p(\Omega)$.

Then, on a set of full probability the sequence $\e^n\displaystyle\sum_{k\in \Int^n\cap \frac{U}{\e}}h\left(\dashint_{\e k+\e [0,1]^n}\bm{u}(\bm{x}) d\bm{x},\tau_k\omega\right)$ converges to $\displaystyle\int_U h(\bm{u}(\bm{x}),\omega) dP d\bm{x}$ for any $U\subset\Re^n$ open, bounded with Lipschitz boundary and any $\bm{u}\in L^1(U)^m$.
\end{theorem}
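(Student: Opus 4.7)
The strategy is to reduce the claim to the special case of step functions $\bm{v}=\sum_{j}\bm{c}_j\mathds{1}_{U_j}$, for which the reformulation of the pointwise ergodic theorem already recalled earlier (from \cite{marver2}) applies directly, and then control the error coming from approximating a general $\bm{u}\in L^1(U)^m$ by such $\bm{v}$ using the H\"{o}lder-type hypothesis on $h$ together with the ergodic theorem applied to $a^p$.

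First, for a step function $\bm{v}=\sum_{j=1}^J\bm{c}_j\mathds{1}_{U_j}$ with the $U_j$ open, Lipschitz and partitioning $U$, on any cube $\e k+\e[0,1]^n$ strictly contained in one $U_j$ the cell average equals $\bm{c}_j$, so
$$\e^n\!\!\sum_{k\in\Int^n\cap\frac{U}{\e}}\!\!h\Bigl(\dashint_{\e k+\e[0,1]^n}\!\bm{v},\tau_k\omega\Bigr)=\sum_{j=1}^J \e^n\!\!\sum_{\e k+\e[0,1]^n\subset U_j}\!\!h(\bm{c}_j,\tau_k\omega)+R^\e(\omega),$$
where $R^\e$ collects the cubes straddling some $\partial U_j$. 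Each inner sum converges a.s. to $|U_j|\int_\Omega h(\bm{c}_j,\omega)dP$ by the ergodic theorem applied to the stationary process $h(\bm{c}_j,\cdot)$, which lies in $L^1(\Omega)$ thanks to the two-sided bound $|h(\bm{c}_j,\omega)-h(\bm{0},\omega)|\le|\bm{c}_j|^{(p-1)/p}a(\omega)$. The boundary remainder $\e^n R^\e$ vanishes because only $O(\e^{-(n-1)})$ cubes touch the $\partial U_j$ while the averaged values are bounded via the ergodic theorem. Summing in $j$ gives the conclusion for step functions.

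For the general case, choose a step function $\bm{v}_\delta$ with $\|\bm{u}-\bm{v}_\delta\|_{L^1(U)^m}\le\delta$. Using the H\"{o}lder assumption and Jensen's inequality $|\dashint(\bm{u}-\bm{v}_\delta)|\le\dashint|\bm{u}-\bm{v}_\delta|$,
$$\Bigl|\e^n\sum_k h\Bigl(\dashint\bm{u},\tau_k\omega\Bigr)-\e^n\sum_k h\Bigl(\dashint\bm{v}_\delta,\tau_k\omega\Bigr)\Bigr|\le\e^n\sum_k a(\tau_k\omega)\Bigl|\dashint(\bm{u}-\bm{v}_\delta)\Bigr|^{\frac{p-1}{p}}.$$
Applying H\"{o}lder's inequality on the lattice with conjugate exponents $p$ and $p/(p-1)$ bounds the right-hand side by
$$\Bigl(\e^n\sum_k a(\tau_k\omega)^p\Bigr)^{1/p}\Bigl(\e^n\sum_k\Bigl|\dashint(\bm{u}-\bm{v}_\delta)\Bigr|\Bigr)^{\frac{p-1}{p}}\le C(\omega)\,\delta^{\frac{p-1}{p}},$$
since the first factor converges a.s. to $(|U|\,\mb{E}(a^p))^{1/p}$ (ergodic theorem applied to $a^p\in L^1(\Omega)$) and the second is dominated by $\|\bm{u}-\bm{v}_\delta\|_{L^1(U)}^{(p-1)/p}$. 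The same pair of inequalities, applied on $\Omega$ instead of on the lattice, bounds $\bigl|\int_U\!\int_\Omega\!(h(\bm{u},\omega)-h(\bm{v}_\delta,\omega))dPd\bm{x}\bigr|$ by $C\delta^{(p-1)/p}$.

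The main obstacle is to ensure that the exceptional null set produced by the ergodic theorem can be chosen independently of $\bm{u}$. I handle this by fixing once and for all a countable family $\{\bm{v}_\delta\}\subset L^1(U)^m$ of step functions with rational values on dyadic sub-cubes of $U$, dense in $L^1(U)^m$, and letting $\Omega_0$ be the intersection of the full-probability sets on which the ergodic theorem holds for each $h(\bm{c}_j,\cdot)$ arising from this family and for $a^p$; countability yields $P(\Omega_0)=1$. For every $\omega\in\Omega_0$ the step-function conclusion is available simultaneously for all $\bm{v}_\delta$, so letting $\e\to 0$ first and $\delta\to 0$ second in the two-sided error estimate above yields the assertion for arbitrary $\bm{u}\in L^1(U)^m$. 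The borderline case $p=+\infty$ is covered by reading $(p-1)/p$ as $1$ and $a\in L^\infty(\Omega)$, which makes $C(\omega)$ a deterministic constant and simplifies the estimates throughout.
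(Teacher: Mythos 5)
Your argument follows essentially the same route as the paper's: establish the convergence first on a dense subclass of $L^1(U)^m$ by applying the reformulated pointwise ergodic theorem to the stationary processes $h(\bm{c},\cdot)$ for fixed constants $\bm{c}$, then pass to a general $\bm{u}$ using the H\"older condition on $h$ together with a lattice H\"older inequality and the ergodic theorem for $a^p\in L^1(\Omega)$, fixing a countable approximating family so that the exceptional null set can be chosen once and for all. The only structural difference is cosmetic: the paper uses $C(\overline U)^m$ as the dense subclass (via an essential-oscillation/subdivision argument) while you use step functions with rational values; both reduce to applying the ergodic theorem with constants on Lipschitz sub-domains.

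One point you should tighten is the boundary remainder for the step-function case. Saying that ``only $O(\e^{-(n-1)})$ cubes touch $\partial U_j$ while the averaged values are bounded via the ergodic theorem'' is not by itself a bound, since the individual terms $h(\dashint\bm{v}_\delta,\tau_k\omega)$ are random and not uniformly bounded in $\omega$. The clean way to finish this is to use $\bigl|h(\dashint\bm{v}_\delta,\tau_k\omega)\bigr|\le|h(\bm{0},\tau_k\omega)|+\|\bm{v}_\delta\|_{L^\infty}^{(p-1)/p}\,a(\tau_k\omega)$, observe that for $\e$ small the boundary cubes are contained in $V_\eta/\e$ for a fixed Lipschitz set $V_\eta\supset\bigcup_j\partial U_j$ with $|V_\eta|<\eta$, and then apply the Lipschitz-domain ergodic theorem to the $L^1(\Omega)$ function $|h(\bm{0},\cdot)|+Ca(\cdot)$ over $V_\eta$, giving $\limsup_\e\,\e^n|R^\e|\le C\eta$ a.s., and finally let $\eta\to0$. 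With that inserted your proof is complete and equivalent in substance to the paper's.
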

\begin{proof}
Obviously, for every $\bm{z}\in\Re^m$, the function $\omega\mapsto h(\bm{z},\omega)$ belongs to $L^1(\Omega)$. Let $\Omega'$ be a set of full probability such that for every $\omega\in\Omega '$ and for any $\bm{z}\in \Rat^m$, the sequence $\displaystyle\sum_{k\in \Int^n\cap \frac{U}{\e}} \e^n h(\bm{z},\tau_k \omega)$ converges to $\displaystyle\int_U\int_\Omega h(\bm{z},\omega) dP d\bm{x}$, for every $U\in\Re^n$, bounded with Lipschitz boundary. Moreover,
$$\sum_{k\in \Int^n\cap \frac{U}{\e}} \e^n h(\bm{z}_1,\tau_k \omega)-\sum_{k\in \Int^n\cap \frac{U}{\e}} \e^n h(\bm{z}_2,\tau_k \omega)\leq |\bm{z}_1-\bm{z}_2|^\frac{p-1}{p}\sum_{k\in \Int^n\cap \frac{U}{\e}} \e^n a(\tau_k \omega),$$
therefore passing to the limit $\e^n\displaystyle\sum_{k\in \Int^n\cap \frac{U}{\e}}h\left(\bm{z},\tau_k\omega\right) d\bm{x}$ converges to $\displaystyle\int_U\int_\Omega h(\bm{z},\omega) dP d\bm{x}$ for any $U\subset\Re^n$ open, bounded with Lipschitz boundary, any $\bm{z}\in \Re^m$, and a.s. $\omega\in\Omega$.

Assuming now that $\bm{u}\in C(\overline{U})^m$, where $U$ is as in the assumptions of the theorem, and let $\bm{x}_0\in \Re^n$ be such that
$$\int_U\int_\Omega h(\bm{u},\omega) dP d\bm{x}=\int_U\int_\Omega h(\bm{u}(\bm{x}_0),\omega) dP d\bm{x}.$$
Then, 
$$\limsup_{\e\to 0}\left|\sum_{k\in \Int^n\cap \frac{U}{\e}}\e^n h(\bm{u}(\e k),\tau_k\omega)-\int_U\int_\Omega h(\bm{u},\omega) dP d\bm{x}\right|\leq ||a||_{L^1(\Omega)}(\sup_U\bm{u}-\inf_U\bm{u})^\frac{p-1}{p}\mc{L}^n(U),$$
a.s.. The continuity of $\bm{u}$ implies that
$$\lim_{\e\to 0}\sum_{k\in \Int^n\cap \frac{U}{\e}}\e^n h(\bm{u}(\e k),\tau_k\omega)=\int_U\int_\Omega h(\bm{u},\omega) dP d\bm{x},$$
a.s. $\omega\in\Omega$. In order to prove the theorem, we use the separability of $L^1(U)^m$ and the density of $C(\overline{U})^m$ in $L^1(U)^m$. We first notice that we can show in the same way that 
$$\lim_{\e\to 0}\sum_{k\in \Int^n\cap \frac{U}{\e}}\e^n h\left(\dashint_{\e k+\e [0,1]^n}\bm{u} d\bm{x},\tau_k\omega\right)=\int_U\int_\Omega h(\bm{u},\omega) dP d\bm{x},$$
for $\bm{u}\in C(\overline{U})^m$. If $\bm{u}\in L^1(U)^m$ and $\bm{v}\in C(\overline{U})^m$,
$$\sum_{k\in \Int^n\cap \frac{U}{\e}}\e^n h\left(\dashint_{\e k+\e [0,1]^n}\bm{u} d\bm{x},\tau_k\omega\right)-\sum_{k\in \Int^n\cap \frac{U}{\e}}\e^n h\left(\dashint_{\e k+\e [0,1]^n}\bm{v} d\bm{x},\tau_k\omega\right)\leq$$
$$\sum_{k\in \Int^n\cap \frac{U}{\e}}\e^n \left(\dashint_{\e k+\e [0,1]^n}(\bm{u}-\bm{v}) d\bm{x}\right)^{\frac{p-1}{p}} a(\tau_k\omega)\leq$$
$$\e^{np} \left(\sum_{k\in \Int^n\cap \frac{U}{\e}}\int_{\e k+\e [0,1]^n}(\bm{u}-\bm{v}) d\bm{x}\right)^{\frac{p-1}{p}} \left(\sum_{k\in \Int^n\cap \frac{U}{\e}} a(\tau_k\omega)^p\right)^\frac{1}{p}.$$
This implies that a.s. $\omega\in\Omega$
$$\limsup_{\e\to 0}\left|\sum_{k\in \Int^n\cap \frac{U}{\e}}\e^n h\left(\dashint_{\e k+\e [0,1]^n}\bm{u} d\bm{x},\tau_k\omega\right)-\int_U\int_\Omega h(\bm{v},\omega) dP d\bm{x}\right|\leq C||\bm{u}-\bm{v}||_{L^1(U)^m}^{\frac{p-1}{p}}.$$
Now we let $\bm{v}$ converge to $\bm{u}$ in $L^1(U)^m$ and obtain the result.
\end{proof}
We use the Ergodic result given by {\bf Theorem \ref{therg}} and the estimate given by {\bf Lemma \ref{lemmaestg}} to derive the following two limits, that essentially show how the random, highly oscillating forces acting on the boundaries of the rigid particles, become through homogenization, deterministic forces in the volume.
\begin{theorem}
\label{therge}
Let $\bm{u}$ and $\bm{v}$ be vector fields in $H^1(D)^d$ and let $\bm{u}^\e$ and $\bm{v}^\e$ be two sequences such that $\bm{u}^\e\rightharpoonup\bm{u}$ and $\bm{u}^\e \rightharpoonup\bm{v}$ in $H^1(D)^d$ as $\e\to 0$. Then, on a set of full probability we have:

i) $\displaystyle\lim_{\e\to 0}\sum_{k\in N^\e}\int_{\partial T^\e_k}\e g\left(\frac{\cdot}{\e},\bm{u}^\e,\tau_k\omega\right) ds = \int_D\int_{\partial T}\int_\Omega g\left(s,\bm{u},\omega\right) dP ds d\bm{x}.$

ii) $\displaystyle\lim_{\e\to 0}\sum_{k\in N^\e}\int_{\partial T^\e_k}\e\bm{\nabla}_{\bm{u}}g\left(\frac{\cdot}{\e},\bm{u}^\e,\tau_k\omega\right)\cdot\bm{v}^\e ds = \int_D\int_{\partial T}\int_\Omega\bm{\nabla}_{\bm{u}}g\left(s,\bm{u},\omega\right)\cdot\bm{v} dP ds d\bm{x}.$ 
\end{theorem}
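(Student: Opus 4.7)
My plan for part (i) is a two-step reduction to Theorem~\ref{therg}. First I would apply Lemma~\ref{lemmaestg} to the pair $(\bm{u}^\e,\bm{u})$ in order to replace $\bm{u}^\e$ by the weak limit $\bm{u}$ inside the surface integrals; the resulting error is bounded by $C^\e(\omega)\bigl(\|\bm{u}^\e-\bm{u}\|_{L^2(D)^d}^\gamma + \e^\gamma\|D\bm{u}^\e-D\bm{u}\|_{L^2(D)^{d\times d}}^\gamma\bigr)$, which vanishes as $\e\to 0$ since $\bm{u}^\e\to\bm{u}$ strongly in $L^2(D)^d$ by the Rellich--Kondrachov compact embedding, the $H^1$-norms are uniformly bounded, and $\e^\gamma\to 0$. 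This reduces the claim to the limit of $\sum_{k\in N^\e}\int_{\partial T^\e_k}\e g(\cdot/\e,\bm{u},\tau_k\omega)\,ds$, where $\bm{u}$ is now fixed.

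Second, I would apply Lemma~\ref{lemmaestg} once more, now to the pair $(\bm{u},\bar{\bm{u}}^\e)$, where $\bar{\bm{u}}^\e$ is the piecewise constant field equal to $\bar{\bm{u}}^\e_k=\dashint_{\e(k+Y)}\bm{u}\,d\bm{x}$ on $\e(k+Y)$; cellwise Poincar\'e yields $\|\bm{u}-\bar{\bm{u}}^\e\|_{L^2(D)^d}\leq C\e\|D\bm{u}\|_{L^2(D)^{d\times d}}$ and $D\bar{\bm{u}}^\e\equiv 0$, so this error is again $O(\e^\gamma)$. After rescaling $s=\e(k+s')$ and using the $Y$-periodicity of $g$ in the first argument, the surviving sum reduces to $\e^d\sum_{k\in N^\e} h(\bar{\bm{u}}^\e_k,\tau_k\omega)$ with $h(\bm{z},\omega)=\int_{\partial T} g(s,\bm{z},\omega)\,ds$. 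Hypotheses (H3)--(H5) and Fubini show that $h(\bm{0},\cdot)\in L^1(\Omega)$ and $|h(\bm{z}_1,\omega)-h(\bm{z}_2,\omega)|\leq C|\bm{z}_1-\bm{z}_2|^\gamma \tilde{a}(\omega)$ with $\tilde{a}(\omega)=\int_{\partial T}a(s,\omega)\,ds\in L^{2/(2-\gamma)}(\Omega)$. Theorem~\ref{therg} applied to $h$ and the fixed field $\bm{u}$ then produces the limit $\int_D\int_\Omega h(\bm{u},\omega)\,dP\,d\bm{x}$, which equals the right-hand side of (i) by Fubini.

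For part (ii), the bilinear dependence on $\bm{v}^\e$ obstructs a direct one-variable ergodic argument, so instead I would exploit the convexity of $g$ from (H2). For any fixed $t>0$, the subdifferential inequalities at $\bm{u}^\e$ with increments $\pm t\bm{v}^\e$ sandwich the gradient term pointwise,
\[
\frac{g(s,\bm{u}^\e,\omega)-g(s,\bm{u}^\e-t\bm{v}^\e,\omega)}{t}\;\leq\;\bm{\nabla}_{\bm{z}} g(s,\bm{u}^\e,\omega)\cdot\bm{v}^\e\;\leq\;\frac{g(s,\bm{u}^\e+t\bm{v}^\e,\omega)-g(s,\bm{u}^\e,\omega)}{t}.
\]
Integrating against $\e\,ds$ on each $\partial T^\e_k$ and summing over $k\in N^\e$, I would then apply part (i) to each of the three $H^1$-weakly convergent sequences $\bm{u}^\e$ and $\bm{u}^\e\pm t\bm{v}^\e$ to pass to the limit $\e\to 0$, producing a sandwich of $\liminf$ and $\limsup$ between two difference quotients of $\int_D\int_{\partial T}\int_\Omega g\,dP\,ds\,d\bm{x}$ at the limits $\bm{u}\pm t\bm{v}$. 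Letting $t\to 0$ and invoking the Fr\'echet differentiability in (H2) together with dominated convergence (domination through the quadratic bound of Remark~\ref{remark1}), both bounds converge to $\int_D\int_{\partial T}\int_\Omega\bm{\nabla}_{\bm{z}} g(s,\bm{u},\omega)\cdot\bm{v}\,dP\,ds\,d\bm{x}$, squeezing the middle term to the same common value.

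The main technical obstacle I anticipate lies in matching the H\"older exponent $\gamma$ inherited by $h$ with the exponent $(p-1)/p$ required by Theorem~\ref{therg}: since $\tilde{a}$ naturally sits only in $L^{2/(2-\gamma)}(\Omega)$, the theorem has to be applied with $p=2/(2-\gamma)$ and the reduced exponent $\gamma/2$, which forces a preliminary restriction to bounded $\bm{u}$ so that the extra factor $|\bm{z}|^{\gamma/2}$ can be absorbed into the constant, followed by a density extension using the continuity bounds already supplied by Lemma~\ref{lemmaestg}.
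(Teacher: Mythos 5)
Your proposal follows essentially the same route as the paper: part (i) is reduced via Lemma \ref{lemmaestg} and cell averages to the weighted ergodic Theorem \ref{therg} applied to $h(\bm{z},\omega)=\int_{\partial T}g(s,\bm{z},\omega)\,ds$, and part (ii) is the same convexity sandwich, followed by part (i) applied to $\bm{u}^\e\pm t\bm{v}^\e$ and a $t\to 0$ dominated-convergence argument. Your two refinements (the second application of Lemma \ref{lemmaestg} to the piecewise-constant averaged field, and the truncation/density fix for the mismatch between the exponent $\gamma$ and $(p-1)/p$ in Theorem \ref{therg}) merely tighten steps the paper treats briefly; note only that for the final dominated-convergence step the appropriate domination comes from the linear gradient bound of Remark \ref{remark3} rather than the quadratic bound of Remark \ref{remark1}.
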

\begin{proof}
i) According to {\bf Lemma \ref{lemmaestg}} 
\begin{equation}
\label{estA}
\begin{split}
&\limsup_{\e\to 0}\left|\sum_{k\in N^\e}\int_{\partial T^\e_k}\e g\left(\frac{\cdot}{\e},\bm{u}^\e,\tau_k\omega\right) ds-\sum_{k\in N^\e}\int_{\partial T^\e_k}\e g\left(\frac{\cdot}{\e},\bm{u},\tau_k\omega\right) ds\right|\\
&\leq \limsup_{\e\to 0}C^\e(\omega)\left(||\bm{u}^\e-\bm{u}||_{L^{2}(D)^d}^\gamma+\e^{\gamma}||\bm{\nabla}\bm{u}^\e-\bm{\nabla}\bm{u}||_{L^{2}(D)^{d\times d}}^\gamma\right)\\
&\leq C \limsup_{\e\to 0}||\bm{u}^\e_\delta-\bm{u}||_{L^{2}(D)^d}^\gamma=0,\mbox{ a.s. } \omega\in\Omega.
\end{split}
\end{equation}
We will show now that 
\begin{equation}
\label{estB}
B=\displaystyle\limsup_{\e\to 0}\left|\sum_{k\in N^\e}\int_{\partial T^\e_k}\e g\left(\frac{\cdot}{\e},\bm{u},\tau_k\omega\right)ds-\int_D\int_{\partial T}\int_{\Omega}g(s,\bm{u},\omega) dP ds d\bm{x}\ \right|=0.
\end{equation}
First, the integral $\displaystyle\sum_{k\in N^\e}\int_{\partial T^\e_k}\e g\left(\frac{\cdot}{\e},\bm{u},\tau_k\omega\right)ds$, after a change of variables, becomes $$\displaystyle\sum_{k\in N^\e}\e^d\int_{\partial T} g\left(k+s,\bm{u}(\e k+\e s),\tau_k\omega\right)ds=\displaystyle\sum_{k\in N^\e}\e^d\int_{\partial T} g\left(s,\bm{u}(\e k+\e s),\tau_k\omega\right)ds.$$ Denote by $\bm{u}^\e_k= \displaystyle\dashint_{\e k+\e Y}\bm{u}d\bm{x}$, then
$$\int_{\partial T} g\left(s,\bm{u}(\e k+\e s),\tau_k\omega\right)ds-\int_{\partial T} g\left(\cdot,\bm{u}^\e_k,\tau_k\omega\right)ds\leq C(\e)\int_{\partial T} a(s,\tau_k\omega)ds,$$
where $C(\e) \to 0$ as $\e\to 0$. Using the Ergodic Theorem, we obtain that a.s. $\omega\in\Omega$
$$\displaystyle\lim_{\e\to 0}\left[\sum_{k\in N^\e}\e^d\int_{\partial T} g\left(s,\bm{u}(\e k+\e s),\tau_k\omega\right)ds-\sum_{k\in N^\e}\e^d\int_{\partial T} g\left(\cdot,\bm{u}^\e_k,\tau_k\omega\right)ds\right] =0.$$
So
\begin{equation}\nonumber
\begin{split}
B=&\limsup_{\e\to 0}\left|\sum_{k\in N^\e}\int_{\partial T}\e^d g\left(\cdot,\bm{u}^\e_k,\tau_k\omega\right)ds-\int_D\int_{\partial T}\int_{\Omega}g(s,\bm{u},\omega) dP d\bm{x} ds\ \right| \\
=&\limsup_{\e\to 0}\left|\sum_{k\in N^\e}\e^d \tilde{g}\left(\dashint_{\e k+\e Y}\bm{u} d\bm{x},\tau_k\omega\right)-\int_D\int_{\Omega} \tilde{g}(\bm{u},\omega) dP d\bm{x}\ \right|,
\end{split}
\end{equation}
where $\tilde{g}$ is defined on $\Re^d\times\Omega$, by $\tilde{g}(\bm{z},\omega)=\displaystyle\int_{\partial T} g(s,\bm{z},\omega) ds$. From {\bf Theorem \ref{therg}} we obtain that $B=0$.

ii) Using the definition of the subdifferential we have that for every $t\in\Re$
$$t\bm{\nabla}_{\bm{u}}g\left(\frac{s}{\e},\bm{u}^\e,\tau_k\omega\right)\cdot\bm{v}^\e\leq
g\left(\frac{s}{\e},\bm{u}^\e+t\bm{v}^\e,\tau_k\omega\right)-g\left(\frac{s}{\e},\bm{u}^\e,\tau_k\omega\right),$$
therefore
$$t\sum_{k\in N^\e}\int_{\partial T^\e_k}\e\bm{\nabla}_{\bm{u}}g\left(\frac{\cdot}{\e},\bm{u}^\e,\tau_k\omega\right)\cdot\bm{v}^\e ds\leq \sum_{k\in N^\e}\int_{\partial T^\e_k}\left(\e g\left(\frac{\cdot}{\e},\bm{u}^\e+t\bm{v}^\e,\tau_k\omega\right) -\e g\left(\frac{\cdot}{\e},\bm{u}^\e,\tau_k\omega\right)\right) ds.$$
Using the first part, we obtain that
\begin{equation}\nonumber
\begin{split}
\limsup_{\e\to 0}t\sum_{k\in N^\e}\int_{\partial T^\e_k}\e\bm{\nabla}_{\bm{u}}g\left(\frac{\cdot}{\e},\bm{u}^\e,\tau_k\omega\right)\cdot\bm{v}^\e ds&\leq \int_D\int_{\partial T}\int_\Omega \left(g\left(s,\bm{u}+t\bm{v},\omega\right) -g\left(s,\bm{u},\omega\right)\right)dP ds d\bm{x}\\
&\leq \int_D\int_{\partial T}\int_\Omega\bm{\nabla}_{\bm{u}}g\left(s,\bm{u}+t\bm{v},\omega\right)\cdot t\bm{v} dP ds d\bm{x}.
\end{split}
\end{equation}
We take first $t>0$, and then $t<0$ and let it converge to $0$, and then apply the Dominated Convergence Theorem and obtain that
\begin{equation}\nonumber
\begin{split}
\lim_{\e\to 0}\sum_{k\in N^\e}\int_{\partial T^\e_k}\e\bm{\nabla}_{\bm{u}}g\left(\frac{\cdot}{\e},\bm{u}^\e,\tau_k\omega\right)\cdot\bm{v}^\e ds=&\lim_{t\to 0}\int_D\int_{\partial T}\int_\Omega\bm{\nabla}_{\bm{u}}g\left(s,\bm{u}+t\bm{v},\omega\right)\cdot\bm{v} dP ds d\bm{x}\\
&=\int_D\int_{\partial T}\int_\Omega\bm{\nabla}_{\bm{u}}g\left(s,\bm{u},\omega\right)\cdot\bm{v} dP ds d\bm{x}.
\end{split}
\end{equation}
\end{proof}
\subsection{Convergence Results}
\label{ssec4.3}
Assume there exists $\bm{f}\in L^2(D)^d$ such that $\bm{f}^\e\rightharpoonup\bm{f}$ and introduce now the functional $E^*:H^1(D)^d\to \Re\cup\{+\infty\}$ defined by
\begin{equation}
\label{defE}
E^*(\bm{v})=\int_{D} \left(\mu\bm{e}(\bm{v}):\bm{e}(\bm{v})+\mc{C}[\bm{e}(\bm{v}),\bm{e}(\bm{v})]\right)d\bm{x}-\int_D\bm{f}\cdot\bm{v}d\bm{x}+\int_D\int_{\partial T}\int_{\Omega}g(s,\bm{v},\omega) dPdsd\bm{x}+I_V(\bm{v}).
\end{equation}
Properties (H4) and (H5) of the function $g$ imply that there exists a constant $C$ such that
\begin{equation}
\label{coercE}
\begin{split}
\left|\int_{\partial T}\int_{\Omega}g(s,\bm{z},\omega) dPds\right|&\leq \left|\int_{\partial T}\int_{\Omega}g(s,\bm{0},\omega) dPds\right|+C|\bm{z}|^\gamma \\
&\leq C+C|\bm{z}|^\gamma,
\end{split}
\end{equation}
for every $\bm{z}\in\Re^d$. Thus $E^*$ is well defined on $H^1(D)^d$ as a consequence of the property (H2), is strictly convex and takes finite values on $V$. Moreover (\ref{coercE}) implies that there exist two constants $A>0$, and $C$ such that for all $\bm{v}\in H^1(D)^d$ we have 
$$E^*(\bm{v})\geq -C - C||\bm{v}||_{L^2(D)^d}^\gamma-C||\bm{v}||_{L^2(D)^d} +A||\bm{v}||_{L^2(D)^d}^2,$$
therefore there exists an unique minimizer $\bm{u}^*\in V$ for the functional $E^*$. It can be shown similarly as in {\bf Theorem \ref{thminfor}} that the minimizer $\bm{u}^*$ will also satisfy the following equivalent weak formulation:
\begin{equation}
\label{eqvarforh2}
\int_{D} 2\mu\bm{e}(\bm{u}^*):\bm{e}(\bm{\bm{\phi}}) d\bm{x}+\int_D 2\mc{C}[\bm{e}(\bm{u}^*),\bm{e}(\bm{\phi})]d\bm{x}=\int_D\bm{f}\cdot\bm{\phi}d\bm{x}+\int_D\bm{f}^*(\bm{u}^*)\cdot\bm{\phi} d\bm{x},
\end{equation}
for every $\bm{\phi}\in V$, where we denoted by $\bm{f}^*$ the vector field on $\Re^d$ defined by
\begin{equation}
\label{defF}
\bm{f}^*(\bm{z})=-\int_{\partial T}\int_{\Omega}\bm{\nabla}_{\bm{z}}g(s,\bm{z},\omega)dPds.
\end{equation}
The linear functional defined on $H^1(D)^d$
$$\bm{\phi\mapsto} \int_{D} 2\mu\bm{e}(\bm{u}^*):\bm{e}(\bm{\bm{\phi}}) d\bm{x}+\int_D 2\mc{C}[\bm{e}(\bm{u}^*),\bm{e}(\bm{\phi})]d\bm{x}-\int_D\bm{f}\cdot\bm{\phi}d\bm{x}-\int_D\bm{f}^*(\bm{u}^*)\cdot\bm{\phi} d\bm{x}$$
being $0$ on $V$, implies the existence of $p^*\in L^2(D)$ such that
\begin{equation}
\label{eqvarforh1}
\int_{D} 2\mu\bm{e}(\bm{u}^*):\bm{e}(\bm{\bm{\phi}}) d\bm{x}+\int_D 2\mc{C}[\bm{e}(\bm{u}^*),\bm{e}(\bm{\phi})]d\bm{x}-\int_D p^*\operatorname{div}\bm{\phi} d\bm{x}=
\int_D\bm{f}\cdot\bm{\phi}d\bm{x}+\int_D\bm{f}^*(\bm{u}^*)\cdot\bm{\phi} d\bm{x},
\end{equation}
for every $\bm{\phi}\in H_0^1(D)^d$.

The pair $\{\bm{u}^*,p^*\}$, $\bm{u}^*\in H^1(D)^d$, $p^*\in L^2(D)$, satisfying equation (\ref{eqvarforh1}), is a solution for a Stokes system that we describe now. Given the bilinear form $\mc{C}$, we denote by $\bm{\mu}^*\bm{A}$ the unique element from $\mc{S}$ such that
\begin{equation}
\label{defmu^*}
\mu\bm{A}:\bm{B}+\mc{C}[\bm{A},\bm{B}]=\bm{\mu}^*\bm{A}:\bm{B},
\end{equation}
for all $\bm{B}\in\mc{S}$. We also denote for any $\bm{u}\in H^1(D)^d$ and any $p\in L^2(D)$ by $\bm{\sigma}^*(\bm{u},p)$ the stress tensor
\begin{equation}
\label{defsigma^*}
\bm{\sigma}^*(\bm{u},p)=2\bm{\mu}^*\bm{e}(\bm{u})-p\bm{I}.
\end{equation}
Equation (\ref{eqvarforh2}) can be written equivalently in the following way: $\{\bm{u}^*,p^*\}$ is the solution for the Stokes system
\begin{equation}
\label{systemh}
\left\{
\begin{array}{rll}
-\operatorname{div}\bm{\sigma}^*(\bm{u},p)&=\bm{f}+\bm{f}^*(\bm{u}) &\ \mbox{in} \ D, \\
\operatorname{div}\bm{u} &=0 &\ \mbox{in}\ D, \\
\bm{u} &=\bm{0} &\ \mbox{on}\ \partial D.
\end{array}
\right.
\end{equation}

We will show next that $E^*$ is the limit functional for the sequence of functionals $(E^\e_\omega)_{\e >0}$ and $\bm{u}^*$, the unique minimizer for $E^*$ is the limit solution, i.e. $E^*$ is the $\Gamma$-- limit for the sequence $(E^\e_\omega)_{\e >0}$ and $\bm{u}^\e_\omega\rightharpoonup\bm{u}^*$ in $V$, a.s. $\omega\in\Omega$. Then, we have the following result:
\begin{theorem}
\label{thGammaconv}
The sequence of functionals $(E^\e_\omega)_{\e >0} $ $\Gamma$-- converges to the functional $E^*$ in the weak topology of $H^1(D)^d$ a.s. $\omega\in\Omega$, where $E^*:H^1(D)^d\to \Re\cup\{+\infty\}$ is defined by (\ref{defE}).
\end{theorem}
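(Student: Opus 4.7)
The plan is to verify the two $\Gamma$--convergence conditions separately and a.s.\ in $\omega$: the liminf inequality $\liminf_{\e\to 0}E^\e_\omega(\bm{u}^\e)\geq E^*(\bm{u})$ for every $\bm{u}^\e\rightharpoonup\bm{u}$ in $H^1_0(D)^d$, and the existence of a recovery sequence $\bm{u}^\e\rightharpoonup\bm{u}$ with $\lim_{\e\to 0}E^\e_\omega(\bm{u}^\e)=E^*(\bm{u})$. In both directions the body--force term passes to the limit because $\bm{f}^\e\rightharpoonup\bm{f}$ weakly in $L^2$ and $\bm{u}^\e\to\bm{u}$ strongly in $L^2$ by Rellich, and the oscillatory interfacial term converges to $\int_D\!\int_{\partial T}\!\int_\Omega g(s,\bm{u},\omega)\,dP\,ds\,d\bm{x}$ by {\bf Theorem \ref{therge}}(i), on a single set of full probability obtained by collecting null sets across a countable dense subset of $H^1_0(D)^d$. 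Thus only the viscous term $\int_D\mu|\bm{e}(\bm{u}^\e)|^2d\bm{x}$ is delicate, since the constraint $\bm{e}(\bm{u}^\e)=\bm{0}$ on $D^\e_r$ forces oscillations that are resolved by the cell correctors $\bm{\chi}_{\bm{A}}$.

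For the recovery sequence I may assume $\bm{u}\in V\cap C^\infty_c(D)^d$ by density. I partition $D$ into cubes of side $\delta$, replace $\bm{e}(\bm{u})$ by its cell average $\bm{A}^\delta$ (symmetric and trace--free since $\operatorname{div}\bm{u}=0$), and define
\begin{equation*}
\bm{u}^\e_\delta(\bm{x})=\bm{u}(\bm{x})-\e\bm{\chi}_{\bm{A}^\delta(\bm{x})}(\bm{x}/\e)\,\eta^\delta(\bm{x}),
\end{equation*}
where $\eta^\delta$ is a smooth cutoff localizing away from the cell interfaces and $\partial D$. Since $\bm{e}_y(\bm{\chi}_{\bm{A}})=\bm{A}$ on $T$ and $\int_Y\bm{e}(\bm{\chi}_{\bm{A}})\,d\bm{y}=\bm{0}$ by periodicity, direct computation combined with periodic averaging gives
\begin{equation*}
\lim_{\e\to 0}\int_D\mu|\bm{e}(\bm{u}^\e_\delta)|^2\,d\bm{x}=\int_D\!\int_{Y\setminus T}\mu\bigl|\bm{A}^\delta-\bm{e}(\bm{\chi}_{\bm{A}^\delta})\bigr|^2d\bm{y}\,d\bm{x}=\int_D\!\bigl(\mu|\bm{A}^\delta|^2+\mc{C}[\bm{A}^\delta,\bm{A}^\delta]\bigr)d\bm{x},
\end{equation*}
which tends to $\int_D(\mu|\bm{e}(\bm{u})|^2+\mc{C}[\bm{e}(\bm{u}),\bm{e}(\bm{u})])d\bm{x}$ as $\delta\to 0$. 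By construction $\bm{e}(\bm{u}^\e_\delta)|_{D^\e_r}$ and $\operatorname{div}\bm{u}^\e_\delta$ both go to zero in $L^2(D)$ after $\e\to 0$ then $\delta\to 0$, so {\bf Lemma \ref{lemmasuf}} delivers a genuine sequence $\bm{u}^\e\in V^\e$ with $E^\e_\omega(\bm{u}^\e)\to E^*(\bm{u})$.

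The liminf inequality is the heart of the proof. For $\bm{u}^\e\rightharpoonup\bm{u}$ with bounded energy, coercivity forces $\bm{u}^\e\in V^\e$ eventually, hence $\bm{u}\in V$. Fix a smooth trace--free matrix field $\bm{B}:D\to\mc{S}$ (piecewise constant on a grid for implementation) and a compactly supported $\phi\in W^{1,\infty}(D)$, and let $\bm{w}^\e(\bm{x})=\bm{B}\bm{x}-\e\bm{\chi}_{\bm{B}}(\bm{x}/\e)$ as in {\bf Lemma \ref{lemmanec}}. From $|\bm{a}|^2\geq 2\bm{a}:\bm{b}-|\bm{b}|^2$ with $\bm{a}=\bm{e}(\bm{u}^\e)$, $\bm{b}=\phi\bm{e}(\bm{w}^\e)$,
\begin{equation*}
\int_D\mu|\bm{e}(\bm{u}^\e)|^2d\bm{x}\geq 2\int_D\mu\phi\,\bm{e}(\bm{u}^\e):\bm{e}(\bm{w}^\e)\,d\bm{x}-\int_D\mu\phi^2|\bm{e}(\bm{w}^\e)|^2d\bm{x}.
\end{equation*}
Periodic averaging shows the last integral tends to $\int_D\phi^2(\mu|\bm{B}|^2+\mc{C}[\bm{B},\bm{B}])d\bm{x}$. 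For the cross term, write $\bm{u}^\e=\bm{v}^\e+(\bm{u}^\e-\bm{v}^\e)$ with $\bm{v}^\e\in V^\e$ the recovery sequence for $\bm{u}$ constructed above; then $\bm{u}^\e-\bm{v}^\e\in V^\e$ converges weakly to $\bm{0}$, so {\bf Lemma \ref{lemmanec}} eliminates its contribution, while the remaining piece $\int_D\mu\phi\bm{e}(\bm{v}^\e):\bm{e}(\bm{w}^\e)\,d\bm{x}$ tends to $\int_D\phi\bigl(\mu\bm{e}(\bm{u}):\bm{B}+\mc{C}[\bm{B},\bm{e}(\bm{u})]\bigr)d\bm{x}$ by the same periodic averaging. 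Letting $\phi\uparrow\mathbf{1}_D$ and optimizing pointwise over $\bm{B}$---the quadratic $\bm{B}\mapsto 2[\mu\bm{e}(\bm{u}):\bm{B}+\mc{C}[\bm{B},\bm{e}(\bm{u})]]-[\mu|\bm{B}|^2+\mc{C}[\bm{B},\bm{B}]]$ attains its supremum at $\bm{B}=\bm{e}(\bm{u})$ with value $\mu|\bm{e}(\bm{u})|^2+\mc{C}[\bm{e}(\bm{u}),\bm{e}(\bm{u})]$---yields the desired lower bound.

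The main obstacle is the careful implementation of the variable--$\bm{B}$ (respectively variable--$\bm{A}^\delta$) step: this requires a partition--of--unity argument combined with {\bf Theorem \ref{thcellglob}} to patch the local cell correctors for neighbouring values of the symmetric matrix while controlling the boundary error between cells, together with a simultaneous handling of the null sets coming from each application of the ergodic {\bf Theorem \ref{therge}}, so that a single event of full probability supports the entire construction for all admissible $\bm{u}$ at once.
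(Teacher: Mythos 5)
Your overall architecture coincides with the paper's: a recovery sequence obtained by freezing $\bm{e}(\bm{u})$ on a fine partition, subtracting the scaled cell correctors $\e\bm{\chi}_{\bm{A}}(\cdot/\e)$ with cutoffs, verifying the conditions (\ref{c1}) and invoking {\bf Lemma \ref{lemmasuf}}, with the interfacial term handled by {\bf Theorem \ref{therge}}; and a lower bound obtained by comparing with the oscillating fields $\bm{w}^\e=\bm{A}\bm{x}-\e\bm{\chi}_{\bm{A}}(\bm{x}/\e)$ and using {\bf Lemma \ref{lemmanec}}. Your liminf step is packaged as a pointwise quadratic optimization over the comparison matrix $\bm{B}$ rather than through the paper's sublinear functional $q$; for limits $\bm{u}$ in the smooth dense class the two organizations are equivalent.

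There is, however, a genuine gap in the liminf part for a general limit $\bm{u}\in V$. Your cross-term computation rests on the splitting $\bm{u}^\e=\bm{v}^\e+(\bm{u}^\e-\bm{v}^\e)$ with $\bm{v}^\e$ ``the recovery sequence for $\bm{u}$'' and on evaluating $\lim_{\e\to0}\int_D\mu\phi\,\bm{e}(\bm{v}^\e):\bm{e}(\bm{w}^\e)\,d\bm{x}$ by periodic averaging; that evaluation needs the explicit corrector structure of $\bm{v}^\e$, which your construction (like the paper's) provides only when $\bm{u}$ is smooth. For general $\bm{u}\in V$ the recovery sequence exists only via the density/lower-semicontinuity step and carries no usable structure, and, unlike the limsup inequality, the liminf inequality cannot be transferred from a dense set by semicontinuity (the inequality points the wrong way). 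One must therefore work with a smooth approximant $\bm{w}$ of $\bm{u}$ and control $\limsup_{\e\to0}\int_D\phi\,\bm{e}(\bm{w}^\e):\bm{e}(\bm{u}^\e-\bm{v}^\e_{\bm{w}})\,d\bm{x}$, where now the weak limit is $\bm{u}-\bm{w}\neq\bm{0}$, so {\bf Lemma \ref{lemmanec}} no longer applies and plain Cauchy--Schwarz only gives an $O(1)$ bound rather than $O(\|\bm{u}-\bm{w}\|_{H^1(D)^d})$. This quantitative control is exactly what the paper's functional $q$ and its continuity estimate $q(\bm{v})\leq C\|\phi\|_{L^\infty(D)}\|\bm{A}\|\,\|\bm{v}\|_{H^1(D)^d}$ provide (built from {\bf Lemma \ref{lemmanec}} together with the bounded-energy sequences of part (i)), leading to $\liminf_{\e\to0}E^\e_\omega(\bm{u}^\e)\geq E^*(\bm{w})-C\|\bm{u}-\bm{w}\|_{H^1(D)^d}$ and then $\bm{w}\to\bm{u}$. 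Your closing paragraph identifies the variable-$\bm{B}$ patching and the null-set bookkeeping as the obstacles (the latter is in fact automatic, since {\bf Theorems \ref{therg}} and {\bf\ref{therge}} already yield one full-probability set valid for all admissible $\bm{u}$ at once), but not this approximation step, which is the missing ingredient; a smaller related point is that the diagonalization in {\bf Lemma \ref{lemmasuf}} must be enlarged so that the cross-term convergences you invoke also hold along the chosen diagonal (countably many matrices $\bm{B}$ and cutoffs suffice).
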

\begin{proof}
i) For the first part we need to show that
$$\Gamma-\limsup_{\e\to 0}E^\e_\omega\leq E^*,$$
a.s. $\omega\in\Omega$.

We take first $\bm{u}\in V\cap W^{2,\infty}(D)^d$ and let $\delta>0$. Then, there exists a partition of $D$, $(D_i)_{i=1}^{N_\delta}$, such that for all $1\leq i \leq N_\delta$, $D_i$ is Lipschitz and
$$\mathop{\essosc}_{D_i}\bm{e}(\bm{u})\leq \delta,$$
where $\displaystyle\mathop{\essosc}_{D_i}\bm{e}(\bm{u})$ denotes the essential oscillation of $\bm{e}(\bm{u})$ over the set $D_i$, i.e., the smallest number $t\geq 0$ such that $||\bm{e}(\bm{u}(\bm{x}_1))-\bm{e}(\bm{u}(\bm{x}_2))||\leq t$, for almost all $\bm{x}_1$ and $\bm{x}_2$ in $D_i$.

We define $\bm{A}_i=\displaystyle\dashint_{D_i}\bm{e}(\bm{u})d\bm{x}$ and let $\bm{u}^\e_i(\bm{x})=\e\bm{\chi}_{\bm{A}_i}\left(\dfrac{\bm{x}}{\e}\right)$ extended by periodicity to the whole $D_i$. For every $1\leq i\leq N_\delta$ we denote by $D_{i,\delta}$ the set of points from $D_i$ with the distance to the boundary of $D_i$ at least $\delta$, and let $\phi_{i,\delta}$ be a smooth cutoff function supported in $D_i$, such that $0\leq\phi_{i,\delta}\leq 1$ in $D_i$, equal to 1 in $D_{i,\delta}$ and such that $||\bm{\nabla} \phi_{i,\delta}||_{L^\infty(D_i)^d}\leq \dfrac{C}{\delta}$. Let us define $\bm{u}^\e_\delta$ by
$$\bm{u}^\e_\delta=\left(\bm{u}-\sum_{i}\phi_{i,\delta}\bm{u}^\e_i\right) \rightharpoonup\bm{u}\mbox{ as } \e\to 0.$$
We now show that the sequence $\bm{u}^\e_\delta$ satisfies properties (\ref{c1}) a.s. $\omega\in\Omega$. It is easy to check that $||\operatorname{div}\bm{u}^\e_\delta||_{L^2(D)}\to 0$ as $\e\to 0$. Also
$$\int_{D_r^\e}\bm{e}(\bm{u}^\e_\delta):\bm{e}(\bm{u}^\e_\delta) d\bm{x}=\sum_{i=1}^{N_\delta}\int_{D_r^\e\cap D_i}\bm{e}(\bm{u}^\e_\delta):\bm{e}(\bm{u}^\e_\delta) d\bm{x}$$
$$\leq\sum_{i=1}^{N_\delta}2 \int_{D_r^\e\cap D_i}\bm{e}(\bm{u}-\bm{u}^\e_i):\bm{e}(\bm{u}-\bm{u}^\e_i) d\bm{x}+ \sum_{i=1}^{N_\delta}2 \int_{D_r^\e\cap D_i}\bm{e}(\bm{u}^\e_i(1-\phi_{i,\delta})):\bm{e}(\bm{u}^\e_i(1-\phi_{i,\delta})) d\bm{x}$$
$$\leq C\int_D \delta^2 d\bm{x}+\sum_{i=1}^{N_\delta} C\int_{D_r^\e\cap D_i}\bm{e}(\bm{u}^\e_i):\bm{e}(\bm{u}^\e_i)(1-\phi_{i,\delta})^2 d\bm{x}+\sum_{i=1}^{N_\delta} C\int_{D_i}|\bm{u}^\e_i|^2 |\bm{\nabla} \phi_{i,\delta}|^2d\bm{x}$$
$$\leq C\delta^2+\sum_{i=1}^{N_\delta} C\int_{D_i\setminus D_{i,\delta}}\bm{e}(\bm{u}^\e_i):\bm{e}(\bm{u}^\e_i) d\bm{x}+\sum_{i=1}^{N_\delta}\dfrac{C}{\delta^2}\int_{D_i}|\bm{u}^\e_i|^2 d\bm{x},$$
which implies that
$$\limsup_{\delta\to 0}\limsup_{\e\to 0} \int_{D^\e_r}\bm{e}(\bm{u}^\e_\delta):\bm{e}(\bm{u}^\e_\delta) d\bm{x}=0.$$
Finally,
$$E^\e_\omega(\bm{u}^\e_\delta)-E^*(\bm{u})=\int_D \mu\bm{e}(\bm{u}^\e_\delta):\bm{e}(\bm{u}^\e_\delta)d\bm{x}-\int_D\bm{f}^\e\cdot\bm{u}^\e_\delta d\bm{x}+\sum_{k\in N^\e}\int_{\partial T^\e_k}\e g\left(\frac{s}{\e},\bm{u}^\e_\delta,\tau_k\omega\right)ds$$
$$-\int_{D} \left(\mu\bm{e}(\bm{u}):\bm{e}(\bm{u})+\mc{C}[\bm{e}(\bm{u}),\bm{e}(\bm{u})]\right)d\bm{x}+\int_D\bm{f}\cdot\bm{u}d\bm{x}-\int_D\int_{\partial T}\int_{\Omega}g(s,\bm{u},\omega) dP ds d\bm{x},$$
hencefore
$$\limsup_{\e\to 0} |E^\e_\omega(\bm{u}^\e_\delta)-E^*(\bm{u})|\leq I_{1,\delta}+I_{2,\delta}+I_{3,\delta},$$
where
$$I_{1,\delta}=\limsup_{\e\to 0}\left|\int_D \mu\bm{e}(\bm{u}^\e_\delta):\bm{e}(\bm{u}^\e_\delta)d\bm{x}-\int_{D} \left(\mu\bm{e}(\bm{u}):\bm{e}(\bm{u})+\mc{C}[\bm{e}(\bm{u}),\bm{e}(\bm{u})]\right)d\bm{x}\ \right|,$$
$$I_{2,\delta}=\limsup_{\e\to 0}\left|\int_D\bm{f}^\e\cdot\bm{u}^\e_\delta d\bm{x}-\int_D\bm{f}\cdot\bm{u}d\bm{x}\ \right|,$$
and
$$I_{3,\delta}=\limsup_{\e\to 0}\left|\sum_{k\in N^\e}\int_{\partial T^\e_k}\e g\left(\frac{s}{\e},\bm{u}^\e_\delta,\tau_k\omega\right)ds-\int_D\int_{\partial T}\int_{\Omega}g(s,\bm{u},\omega) dP d\bm{x} ds\ \right|.$$

It is easy to see that $I_{2,\delta}=0$, and according to the first part of {\bf Theorem \ref{therge}} $I_{3,\delta}=0$ as well.

To estimate $I_{1,\delta}$ we make the following elementary calculations:
$$\int_D \mu\bm{e}(\bm{u}^\e_\delta):\bm{e}(\bm{u}^\e_\delta)d\bm{x}-\int_{D}\left(\mu\bm{e}(\bm{u}):\bm{e}(\bm{u})+\mc{C}[\bm{e}(\bm{u}),\bm{e}(\bm{u})]\right)d\bm{x}=$$
$$\sum_{i=1}^{N_\delta} \left(\int_{D_i} \mu\bm{e}(\bm{u}-\phi_{i,\delta}\bm{u}^\e_i):\bm{e}(\bm{u}-\phi_{i,\delta}\bm{u}^\e_i)d\bm{x}-\int_{D_i} \left(\mu\bm{e}(\bm{u}):\bm{e}(\bm{u})+\mc{C}[\bm{e}(\bm{u}),\bm{e}(\bm{u})]\right)d\bm{x} \right)=$$
$$\sum_{i=1}^{N_\delta} \left(\int_{D_i} -2\mu\bm{e}(\bm{u}):\bm{e}(\phi_{i,\delta}\bm{u}^\e_i)d\bm{x}+ \int_{D_i} \mu\bm{e}(\phi_{i,\delta}\bm{u}^\e_i):\bm{e}(\phi_{i,\delta}\bm{u}^\e_i)d\bm{x}-\int_{D_i}\mc{C}[\bm{e}(\bm{u}),\bm{e}(\bm{u})]d\bm{x} \right).$$
Since $\phi_{i,\delta}\bm{u}^\e_i \rightharpoonup\bm{0}$ as $\e\to 0$,
$$I_{1,\delta}\leq\sum_{i=1}^{N_\delta}\limsup_{\e\to 0}\left|\int_{D_i} \mu\bm{e}(\phi_{i,\delta}\bm{u}^\e_i):\bm{e}(\phi_{i,\delta}\bm{u}^\e_i)d\bm{x}-\int_{D_i}\mc{C}[\bm{e}(\bm{u}),\bm{e}(\bm{u})]d\bm{x} \right|\leq$$
$$\sum_{i=1}^{N_\delta}\limsup_{\e\to 0}\left|\int_{D_i} \mu \phi_{i,\delta}\bm{e}(\bm{u}^\e_i):\phi_{i,\delta}\bm{e}(\bm{u}^\e_i)d\bm{x}-\int_{D_i}\mc{C}[\bm{e}(\bm{u}),\bm{e}(\bm{u})]d\bm{x} \right|\leq$$
$$\sum_{i=1}^{N_\delta}\left[\limsup_{\e\to 0}C\int_{D_i\setminus D_{i,\delta}}\bm{e}(\bm{u}^\e_i):\bm{e}(\bm{u}^\e_i)d\bm{x}+\limsup_{\e\to 0}\left|\int_{D_i}\left(\mu\bm{e}(\bm{u}^\e_i):\bm{e}(\bm{u}^\e_i)-\mc{C}[\bm{e}(\bm{u}),\bm{e}(\bm{u})]\right)d\bm{x} \right|\right].$$
From the construction of $\bm{u}^\e_i$, $\displaystyle\int_{D_i}\bm{e}(\bm{u}^\e_i):\bm{e}(\bm{u}^\e_i)d\bm{x}$ converges to $\displaystyle\int_{D_i}\mc{C}[\bm{e}(\bm{\chi}_{A_i}),\bm{e}(\bm{\chi}_{A_i})]d\bm{x}$ and 
$$\mc{C}[\bm{e}(\bm{\chi}_{A_i}),\bm{e}(\bm{\chi}_{A_i})]-\mc{C}[\bm{e}(\bm{u}),\bm{e}(\bm{u})]=\mc{C}[\bm{e}(\bm{u})-\bm{e}(\bm{\chi}_{A_i}),\bm{e}(\bm{u})+\bm{e}(\bm{\chi}_{A_i})]\leq C\delta,$$
almost everywhere in $D_i$, so
$$I_{1,\delta}\leq C\sum_{i=1}^{N_\delta}\limsup_{\e\to 0}\int_{D_i\setminus D_{i,\delta}}\bm{e}(\bm{u}^\e_i):\bm{e}(\bm{u}^\e_i)d\bm{x}+C\delta.$$
We obtained for $\bm{u}\in V\cap W^{2,\infty}(D)^d$ and $\delta>0$ the sequence $\bm{u}^\e_\delta$ weakly convergent to $\bm{u}$ in $H_0^1(D)^d$ that satisfies properties (\ref{c1}), i.e.
\begin{equation}\nonumber
\begin{split}
\limsup_{\delta\to 0}\limsup_{\e\to 0}\displaystyle\int_D(\operatorname{div}\bm{u}^\e_\delta)^2 d\bm{x}=\limsup_{\delta\to 0}\limsup_{\e\to 0}\int_{D_r^\e}\bm{e}(\bm{u}^\e_\delta):\bm{e}(\bm{u}^\e_\delta) d\bm{x}\\
=\limsup_{\delta\to 0}\limsup_{\e\to 0}|E^\e_\omega(\bm{u}^\e_\delta)-E^*(u)|=0,\mbox{ a.s. }\omega\in\Omega.
\end{split}
\end{equation}
To this sequence we apply {\bf Lemma \ref{lemmasuf}} for $L=E^*(\bm{u})$ and obtain a sequence $\bm{u}^\e\in V^\e$, weakly convergent to $\bm{u}$ such that $E^\e_\omega(\bm{u}^\e)\to E^*(\bm{u})$ a.s. $\omega\in\Omega$.

We notice that we have proved so far that
$$\Gamma-\limsup_{\e\to 0}E^\e_\omega(\bm{u})\leq E^*(\bm{u}),$$
in the space $V$ equipped with the sequentially weak topology of $H_0^1(D)^d$, for any $\bm{u}$ in a dense subset of $V$ with respect to the strong topology of $H_0^1(D)^d$. We now use the fact that $\Gamma-\displaystyle\limsup_{\e\to 0}E^\e_\omega$ is lower semicontinuous by the definition, and that $E$ is continuous.

Take $\bm{u}\in V$, and let $(\bm{u}_k)_{k\geq 1}$ be a sequence strongly converging to $\bm{u}$ in $H_0^1(D)^d$, such that
$$\Gamma-\limsup_{\e\to 0}E^\e_\omega(\bm{u}_k)\leq E^*(\bm{u}_k).$$
Setting $k\to \infty$ we obtain
$$\Gamma-\limsup_{\e\to 0}E^\e_\omega(\bm{u})\leq\liminf_{k\to\infty}\Gamma-\limsup_{\e\to 0}E^\e_\omega(\bm{u}_k)\leq\liminf_{k\to\infty}E^*(\bm{u}_k),$$
which implies that
$$\Gamma-\limsup_{\e\to 0}E^\e_\omega(\bm{u})\leq E^*(\bm{u}),$$
for every $\bm{u}\in V$.

ii) In the second part we prove 
$$\Gamma-\liminf_{\e\to 0}E^\e_\omega\geq E, \mbox{ a.s. }\omega\in\Omega.$$

For this, we have to show that for every $\bm{u}\in H^1(D)^d$ and for every sequence $\bm{u}^\e$ weakly convergent to $\bm{u}$ we have
$$E^*(\bm{u})\leq \liminf_{\e\to 0} E^\e_\omega(\bm{u}^\e).$$
Take first $\bm{w}$ a smooth vector field, $\bm{w}\in V\cap W^{2,\infty}(D)^d$ and for a small positive $\delta >0$ select a partition $(D_i)_{i=1,N_\delta}$ of $D$, such that for all $1\leq i \leq N_\delta$, $D_i$ is Lipschitz and
$$\mathop{\essosc}_{D_i}\bm{e}(\bm{w})\leq \delta.$$
As before, $\bm{A}_i=\displaystyle\dashint_{D_i}\bm{e}(\bm{w})d\bm{x}$ and let $\bm{w}^\e_i(\bm{x})=\bm{A}_i\bm{x}-\e\bm{\chi}_{\bm{A}_i}\left(\dfrac{\bm{x}}{\e}\right)$ extended by periodicity to the whole $D_i$. Then $\bm{w}^\e_i\rightharpoonup\bm{w}_i$ as $\e\to 0$, where $\bm{w}_i(\bm{x})=\bm{A}_i\bm{x}$. Take also $\phi_i$ a smooth cutoff function supported in $D_i$ with values in between $0$ and $1$. We have:
$$\bm{e}(\bm{u}^\e(\bm{x})):\bm{e}(\bm{u}^\e(\bm{x}))\geq\bm{e}(\bm{w}^\e_i(\bm{x})):\bm{e}(\bm{w}^\e_i(\bm{x}))-2\bm{e}(\bm{w}^\e_i(\bm{x})):\bm{e}(\bm{u}^\e(\bm{x})-\bm{w}^\e_i(\bm{x}))$$
for almost all $\bm{x}\in D_i$. We multiply $\mu\phi_i$ and integrate over $D_i$ and obtain
$$\int_{D_i}\mu\bm{e}(\bm{u}^\e):\bm{e}(\bm{u}^\e)d\bm{x}\geq\int_{D_i}\mu\phi_i\bm{e}(\bm{w}^\e_i):\bm{e}(\bm{w}^\e_i)d\bm{x}-2\int_{D_i}\mu\phi_i\bm{e}(\bm{w}^\e_i):\bm{e}(\bm{u}^\e-\bm{w}^\e_i)d\bm{x}.$$
From the choice of $\bm{w}_i$ we have that
$$\lim_{\e\to 0} \int_{D_i}\mu\phi_i\bm{e}(\bm{w}^\e_i):\bm{e}(\bm{w}^\e_i)d\bm{x}=\int_{D_i}\mu\phi_i\bm{A}_i:\bm{A}_i d\bm{x}+\int_{D_i} \phi_i\mathcal{C}[\bm{A}_i,\bm{A}_i] d\bm{x},$$
thus
\begin{equation}\nonumber
\begin{split}
\liminf_{\e\to 0}\int_{D_i}\mu\bm{e}(\bm{u}^\e):\bm{e}(\bm{u}^\e)d\bm{x}&\geq\int_{D_i}\mu\phi_i\bm{A}_i:\bm{A}_i d\bm{x}+\int_{D_i}\phi_i\mathcal{C}[\bm{A}_i,\bm{A}_i] d\bm{x}\\
&-2\mu\limsup_{\e\to 0}\int_{D_i}\phi_i\bm{e}(\bm{w}^\e_i):\bm{e}(\bm{u}^\e-\bm{w}^\e_i)d\bm{x}
\end{split}
\end{equation}
Assume for now that
$$\limsup_{\e\to 0}\int_{D_i}\phi_i\bm{e}(\bm{w}^\e_i):\bm{e}(\bm{u}^\e-\bm{w}^\e_i)d\bm{x}$$
is bounded from above by $C||\bm{e}(\bm{u}-\bm{w}_i)||_{L^2(D_i)^{d\times d}}$, where the constant $C$ is independent of $D_i$ and $\phi_i$. We now sum over all $1\leq i\leq N_\delta$ which yields
$$\liminf_{\e\to 0} \int_D \mu\bm{e}(\bm{u}^\e):\bm{e}(\bm{u}^\e) d\bm{x}\geq \sum_{i=1}^{N_\delta}\int_{D_i}\left(\mu\phi_i\bm{A}_i:\bm{A}_i + \phi_i\mathcal{C}[\bm{A}_i,\bm{A}_i]\right) d\bm{x}-C||\bm{e}(\bm{u})-\bm{A}_i||_{L^2(D_i)^{d\times d}}.$$
We let now $\phi_i$ to converge increasingly to one in each $D_i$ apply the Lebesgue Beppo-Levy Theorem to obtain
$$ \liminf_{\e\to 0} \int_D \mu\bm{e}(\bm{u}^\e):\bm{e}(\bm{u}^\e) d\bm{x}\geq \sum_{i=1}^{N_\delta}\int_{D_i}\left(\mu\bm{A}_i:\bm{A}_i+ \mathcal{C}[\bm{A}_i,\bm{A}_i]\right) d\bm{x}-C||\bm{e}(\bm{u})-\bm{A}_i||_{L^2(D_i)^{d\times d}}.$$
Set $\delta\to 0$ and the uniform continuity of $\bm{e}(\bm{w})$ in $D$ implies that
$$\lim_{\delta\to 0}\sum_{i=1}^{N_\delta}\left(\int_{D_i}\mu\bm{A}_i:\bm{A}_i d\bm{x}+\int_{D_i} \mathcal{C}[\bm{A}_i,\bm{A}_i] d\bm{x}\right)=\int_D \mu\bm{e}(\bm{w}):\bm{e}(\bm{w}) d\bm{x}+\int_D \mathcal{C}[\bm{e}(\bm{w}),\bm{e}(\bm{w})] d\bm{x},$$
and
$$\lim_{\delta\to 0}\sum_{i=1}^{N_\delta}||\bm{e}(\bm{u})-\bm{A}_i||_{L^2(D_i)^{d\times d}}=||\bm{e}(\bm{u})-\bm{w}||_{L^2(D)^{d\times d}}.$$
Since $\bm{u}^\e\rightharpoonup\bm{u}$ as $\e\to 0$ we have according to the first part from {\bf Theorem \ref{therge}} that a.s. $\omega\in\Omega$
$$\liminf_{\e\to 0} E^\e_\omega(\bm{u}^\e)\geq E^*(\bm{w})-C||\bm{u}-\bm{w}||_{H^1(D)^d}.$$
Now we make $\bm{w}$ converge strongly in $H^1(D)^d$ to $\bm{u}$ and obtain the desired inequality.

It remains to show that
$$\limsup_{\e\to 0}\int_{D_i}\phi_i\bm{e}(\bm{w}^\e_i):\bm{e}(\bm{u}^\e-\bm{w}^\e_i)d\bm{x}\leq C||\bm{e}(\bm{u}-\bm{w}_i)||_{L^2(D_i)^{d\times d}}.$$
For every symmetric matrix $\bm{A}$ with zero trace, and every $\phi\in W^{1,\infty}(D)$ compactly supported in $D$, define the functional $q:V\to \Re\cup\{+\infty\}$ by
$$q(\bm{v})=\sup_{\underset{\bm{v}^\e\rightharpoonup\bm{v}}{\bm{v}^\e\in V^\e}}\limsup_{\e\to 0} \int_{D}\phi\bm{e}(\bm{w}^\e):\bm{e}(\bm{v}^\e)d\bm{x},$$
where $\bm{w}^\e(\bm{x})=\bm{A}\bm{x}-\e\bm{\chi}\left(\dfrac{\bm{x}}{\e}\right)$.
It is clear from the definition that $q$ is sublinear, i.e.
$$q(\lambda\bm{v})=\lambda q(\bm{v}) \ \mbox{ for every positive }\lambda \ \mbox{ and every }\bm{v}\in V,$$
and
$$q(\bm{v}_1+\bm{v}_2)\leq q(\bm{v}_1)+q(\bm{v}_2)\ \mbox{ for every }\bm{v}_1,\bm{v}_2\in V.$$
It is enough to show that $q$ is continuous with respect to the strong topology on $H^1(D)^d$, i.e. there exists a constant $C$ such that for every $\bm{v}\in V$
$$q(\bm{v})\leq C ||\bm{v}||_{H^1(D)^d},$$
and the constant $C$ is proportional with $||\phi||_{L^\infty(D)}$ and with $||\bm{A}||$.

But $q(\bm{v}_1+\bm{v}_2)\leq q(\bm{v}_1)+\displaystyle\limsup_{\e\to 0}\int_{D} \phi\bm{e}(\bm{w}^\e):\bm{e}(\bm{v}_2^\e)d\bm{x}$, for every $\bm{v}_1,\bm{v}_2\in V$, and any weakly convergent sequence $\bm{v}_2^\e$ to $\bm{v}_2$. But from {\bf Lemma \ref{lemmanec}} $q(\bm{0})=0$, and in the first part we showed that for any $\bm{v}\in V$, there exists a sequence $\bm{v}^\e\rightharpoonup\bm{v}$ as $\e\to 0$ such that
$$\limsup_{\e\to 0}||\bm{e}(\bm{v}^\e)||_{L^2(D)^{d\times d}}\leq C ||\bm{v}||_{H^1(D)^d},$$
for a constant independent of $\bm{v}$, which means that for every $\bm{v}\in V$ we have
$$q(\bm{v})\leq \limsup_{\e\to 0} C ||\phi\bm{e}(\bm{w}^\e)||_{L^2(D)^{d\times d}}||\bm{v}||_{H^1(D)^d}\leq C ||\phi||_{L^\infty(D)}||\bm{v}||_{H^1(D)^d}||\bm{A}||.$$
\end{proof}

The following corollary provides us with the existence of the homogenized solution, and the corresponding energy convergence.
\begin{cor}
\label{cor2}
Almost surely $\omega\in\Omega$ the sequence $\bm{u}^\e_\omega\rightharpoonup\bm{u}^*$ in $V$ and $E^\e_\omega(\bm{u}^\e_\omega)\to E^*(\bm{u}^*)$ as $\e\to 0$, where $\bm{u}^*$ is the unique minimizer of the functional $E^*$.
\end{cor}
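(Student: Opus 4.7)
The proof will follow the standard $\Gamma$-convergence recipe for convergence of minimizers, leveraging Theorem \ref{thGammaconv} together with uniqueness of the limit minimizer $\bm{u}^*$. Fix $\omega$ in the full-probability set on which Theorem \ref{thGammaconv} holds, as well as the pointwise ergodic theorem for the relevant integrands from (H4)--(H5).

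The first step is \emph{equicoercivity}: I need a uniform bound $\|\bm{u}^\e_\omega\|_{H^1_0(D)^d}\le C(\omega)$. Using the minimality inequality $E^\e_\omega(\bm{u}^\e_\omega)\le E^\e_\omega(\bm{0})$, the right-hand side equals $\sum_{k\in N^\e}\int_{\partial T^\e_k}\e g(s/\e,\bm{0},\tau_k\omega)\,ds$, which converges a.s. by the ergodic theorem (Theorem \ref{therg}) and is therefore bounded. For the lower bound on $E^\e_\omega(\bm{u}^\e_\omega)$ I use Korn's inequality on the quadratic viscous term, the $L^2$--$L^2$ bound on the body force term, and the subdifferential inequality $g(s,\bm{z},\omega)\ge g(s,\bm{0},\omega)+\bm{z}\cdot\bm{\nabla}_{\bm{z}}g(s,\bm{0},\omega)$ combined with the pointwise estimate $|\bm{\nabla}_{\bm{z}}g(s,\bm{0},\omega)|\le c(s,\omega)$ from Remark \ref{remark3} and the rescaled trace inequality $\e\int_{\partial T^\e_k}|\bm{v}|^2\,ds\le C(\int_{T^\e_k}|\bm{v}|^2+\e^2\int_{T^\e_k}|\bm{\nabla}\bm{v}|^2)\,d\bm{x}$ to absorb the surface term into the coercive quadratic part. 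This yields $E^\e_\omega(\bm{v})\ge c\|\bm{\nabla}\bm{v}\|_{L^2(D)}^2-C(\omega)\|\bm{\nabla}\bm{v}\|_{L^2(D)}-C(\omega)$, and combined with the upper bound above gives the desired uniform estimate.

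From this bound and the weak closedness of $V$ (note $V^\e\subset V$), every weak accumulation point of $\bm{u}^\e_\omega$ lies in $V$. Let $\bm{u}\in V$ be such a limit along some subsequence $\e_j\to 0$. Applying the $\Gamma$-$\liminf$ inequality from part (ii) of Theorem \ref{thGammaconv} to this subsequence,
\begin{equation*}
E^*(\bm{u})\le \liminf_{j\to\infty}E^{\e_j}_\omega(\bm{u}^{\e_j}_\omega).
\end{equation*}
On the other hand, by the $\Gamma$-$\limsup$ inequality from part (i) of Theorem \ref{thGammaconv} applied at $\bm{u}^*$, there exists a recovery sequence $\bm{w}^\e\rightharpoonup \bm{u}^*$ with $\limsup_{\e\to 0}E^\e_\omega(\bm{w}^\e)\le E^*(\bm{u}^*)$. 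Minimality of $\bm{u}^\e_\omega$ yields $E^\e_\omega(\bm{u}^\e_\omega)\le E^\e_\omega(\bm{w}^\e)$, hence $\limsup_{\e\to 0}E^\e_\omega(\bm{u}^\e_\omega)\le E^*(\bm{u}^*)$. Chaining the two inequalities gives $E^*(\bm{u})\le E^*(\bm{u}^*)$, and by the uniqueness of the minimizer $\bm{u}^*$ proved after \eqref{coercE}, we conclude $\bm{u}=\bm{u}^*$. Since every weakly convergent subsequence of the bounded family $\{\bm{u}^\e_\omega\}$ must have the same limit $\bm{u}^*$, a standard Urysohn-type argument upgrades this to weak convergence of the full sequence, $\bm{u}^\e_\omega\rightharpoonup \bm{u}^*$ in $V$.

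Finally, sandwiching $E^*(\bm{u}^*)\le \liminf_\e E^\e_\omega(\bm{u}^\e_\omega)\le \limsup_\e E^\e_\omega(\bm{u}^\e_\omega)\le E^*(\bm{u}^*)$ delivers the energy convergence $E^\e_\omega(\bm{u}^\e_\omega)\to E^*(\bm{u}^*)$. The principal technical obstacle is the equicoercivity step: controlling the random surface term uniformly in $\e$ requires the trace-type scaling together with the ergodic theorem to ensure that both $\sum_k\int_{\partial T^\e_k}\e g(s/\e,\bm{0},\tau_k\omega)\,ds$ and the weight $\sum_k\int_{\partial T^\e_k}\e c(s/\e,\tau_k\omega)^2\,ds$ stay bounded on a set of full probability; the remainder of the argument is a mechanical application of $\Gamma$-convergence and uniqueness.
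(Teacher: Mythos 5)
Your overall architecture coincides with the paper's: establish equicoercivity a.s.\ and then combine it with the $\Gamma$-convergence of {\bf Theorem \ref{thGammaconv}} to get convergence of minimizers and of minimum values. The paper simply cites Dal Maso (Theorem 7.8 and Corollary 7.17) at this point, whereas you write out the fundamental theorem of $\Gamma$-convergence by hand (liminf inequality along a weakly convergent subsequence, recovery sequence at $\bm{u}^*$ plus minimality, uniqueness of the minimizer, subsequence principle, energy sandwich); that part of your proposal is correct and equivalent.

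The genuine issue is in your equicoercivity estimate. You bound the surface term from below via the subdifferential inequality at $\bm{0}$ and then control $\sum_{k}\int_{\partial T^\e_k}\e\,c(s/\e,\tau_k\omega)|\bm{v}|\,ds$ by Cauchy--Schwarz against the weight $\sum_{k}\int_{\partial T^\e_k}\e\,c(s/\e,\tau_k\omega)^2\,ds$, whose a.s.\ boundedness you want from the ergodic theorem applied to $\omega\mapsto\int_{\partial T}c(s,\omega)^2\,ds$. This requires $c=|\bm{\nabla}_{\bm{z}}g(\cdot,\bm{0},\cdot)|\in L^2(\partial T\times\Omega)$, which the hypotheses do not give: (H5) only yields $|\bm{\nabla}_{\bm{z}}g(s,\bm{0},\omega)|\le C\,a(s,\omega)$ with $a\in L^{\frac{2}{2-\gamma}}(\partial T\times\Omega)$, and $\frac{2}{2-\gamma}<2$ whenever $\gamma<1$ ({\bf Remark \ref{remark3}} concerns the deterministic forces of Section 2, and its $L^2$ assertion is itself only supported at the level $L^{\frac{2}{2-\gamma}}$). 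So for $\gamma<1$ the ergodic step for your weight is unjustified and the absorption argument fails as written. The fix---and the paper's route---avoids the gradient of $g$ altogether: use (H5) directly, $g(s,\bm{v},\omega)\ge g(s,\bm{0},\omega)-C\,a(s,\omega)|\bm{v}|^\gamma$, and invoke {\bf Lemma \ref{lemmaestg}} for the pair $(\bm{v},\bm{0})$ (equivalently, the limit (\ref{estB}) together with (\ref{coercE})), which gives a.s.\ for small $\e$ a bound of the form $\bigl|\sum_{k}\int_{\partial T^\e_k}\e g(\cdot/\e,\bm{v},\tau_k\omega)\,ds\bigr|\le C+C\|\bm{v}\|_{L^2(D)^d}^\gamma+C\e^\gamma\|D\bm{v}\|_{L^2(D)^{d\times d}}^\gamma$, hence $E^\e_\omega(\bm{v})\ge -C-C\|\bm{v}\|_{L^2(D)^d}^\gamma-C\|\bm{v}\|_{L^2(D)^d}+A\|\bm{v}\|_{L^2(D)^d}^2$ uniformly in $\e$; the sublinear exponent $\gamma$ is precisely what lets the surface term be absorbed without any $L^2$ integrability of $a$. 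With that replacement, your use of minimality against $E^\e_\omega(\bm{0})$ and the remainder of your argument go through unchanged.
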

\begin{proof}
We only need to show that a.s. $\omega\in\Omega$ the functionals $E^\e_\omega$ are equicoercive. After that, from the $\Gamma$-- convergence of functionals $E^\e_\omega$ to $E^*$ and equicoercivity of $E^\e_\omega$ we deduce the convergence of energies as well as the convergence of minimizers (\cite{dalint93} Theorem 7.8 and Corollary 7.17), therefore a.s. $\omega\in\Omega$ we obtain that $\bm{u}^\e_\omega\rightharpoonup\bm{u}^*$ in $V$ and $E^\e(\bm{u}^\e_\omega)\to E^*(\bm{u}^*)$ as $\e\to 0$.

To show equicoercivity we use first (\ref{estB}) and obtain a.s. $\omega\in\Omega$ the following estimate for $\e$ small enough and a constant $C$ independent of $\e$:
$$\left|\sum_{k\in N^\e}\int_{\partial T^\e_k}\e g\left(\frac{\cdot}{\e},\bm{v},\tau_k\omega\right)ds\right|\leq C+\int_D\int_{\partial T}\int_{\Omega}g(s,\bm{v},\omega) dP d\bm{x} ds,$$
for any $\bm{v}\in H^1(D)^d$. Then we use ($\ref{coercE}$) and derive
\begin{equation}\nonumber
\begin{split}
\left|\sum_{k\in N^\e}\int_{\partial T^\e_k}\e g\left(\frac{\cdot}{\e},\bm{v},\tau_k\omega\right)ds\right|&\leq C+\left|\int_D\int_{\partial T}\int_{\Omega}g(s,\bm{0},\omega) dP d\bm{x} ds\right|+C||\bm{v}||_{L^2(D)^d}^\gamma\\
&\leq C +C|\bm{v}|^\gamma,
\end{split}
\end{equation}
for a constant $C$ independent of $\e$. Finally, the definition (\ref{defE^e_omega}) of $E^\e_\omega$ gives us that a.s. $\omega\in\Omega$, there exist two constants $A>0$ and $C$, both independent of $\e$, such that for all $\bm{v}\in H^1(D)^d$ and for $\e$ small enough we have 
$$E^\e_\omega(\bm{v})\geq -C - C||\bm{v}||_{L^2(D)^d}^\gamma-C||\bm{v}||_{L^2(D)^d} +A||\bm{v}||_{L^2(D)^d}^2, $$
which implies the equicoercivity of $E^\e_\omega$.
\end{proof}
\subsection{Corrector Result}
The $\Gamma$-- convergence result provided by {\bf Theorem \ref{thGammaconv}} together with the {\bf Corollary \ref{cor2}} gives us only weak convergence in $H^1(D)^d$ of the solution $\bm{u}^\e_\omega$ to the homogenized solution $\bm{u}^*$. Next, we formulate the first order corrector result to improve the weak convergence 
to strong convergence.

First, we define for every $1\leq i,j\leq d$ the matrix $\bm{E}_{ij}\in\mc{S}$ componentwise by
\begin{equation}
\label{defE_ij}
(E_{ij})_{kl}=\frac{1}{2}\left(\delta_{ik} \delta_{jl} + \delta_{il} \delta_{jk}\right),
\end{equation}
for every $1\leq k,l\leq d$ and redenote for simplicity $\bm{\chi}_{ij}:=\bm{\chi}_{\bm{E}_{ij}}$. Since $\bm{\chi}_{ij}\in\bm{K}_{\bm{E}_{ij}}$ we use the definition (\ref{defK_A}) of the sets $\bm{K}_{\bm{A}}$, and we obtain that $\bm{e}(\bm{\chi}_{\bm{E}_{ij}})=\bm{E}_{ij}$ in $T$. The definition (\ref{defE_ij}) will imply that
\begin{equation}
\label{p1}
\operatorname{div}\bm{\chi}_{ij}=\delta_{ij} \mathds{1}_T.
\end{equation}

\begin{theorem}
\label{thcorr}
Assume that the homogenized velocity $\bm{u}^*\in V\cap W^{2,\infty}(D)^d$, then a.s. $\omega\in\Omega$, the sequence $\bm{u}^\e_\omega-\left(\bm{u}^*-\displaystyle\sum_{i,j=1}^d\e\bm{\chi}_{ij}\left(\frac{\cdot}{\e}\right)e_{ij}(\bm{u}^*)\right)\to\bm{0}$ in $H^1(D)^d$ as $\e\to 0$, where $\bm{\chi}_{ij}$ is 
extended $Y$-- periodically to the whole $D$.
\end{theorem}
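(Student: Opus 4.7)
My plan is to upgrade the weak $H^1$--convergence $\bm{u}^\e_\omega\rightharpoonup\bm{u}^*$ from \textbf{Corollary \ref{cor2}} to strong $H^1$--convergence of the explicit corrector, by combining energy convergence, the parallelogram identity for the quadratic form $F^\e(\bm{v}):=\int_D\mu\,\bm{e}(\bm{v}):\bm{e}(\bm{v})\,d\bm{x}$, and Korn's inequality. The regularity $\bm{u}^*\in W^{2,\infty}(D)^d$ is used throughout to treat $e_{ij}(\bm{u}^*)$ as a Lipschitz multiplier.

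\emph{Step 1 (rigid approximation of the corrector).} Set $\bm{v}^\e(\bm{x})=\bm{u}^*(\bm{x})-\sum_{i,j}\e\bm{\chi}_{ij}(\bm{x}/\e)\,e_{ij}(\bm{u}^*)(\bm{x})$. A direct calculation exploiting $\bm{e}(\bm{\chi}_{ij})=\bm{E}_{ij}$ on $T$ together with the identity $\sum_{i,j}e_{ij}(\bm{u}^*)\bm{E}_{ij}=\bm{e}(\bm{u}^*)$ shows that the principal part of $\bm{e}(\bm{v}^\e)$ cancels pointwise inside each $T^\e_k$, leaving $\|\bm{e}(\bm{v}^\e)\|_{L^\infty(D_r^\e)}=O(\e)$; analogously, (\ref{p1}) combined with $\operatorname{div}\bm{u}^*=0$ gives $\|\operatorname{div}\bm{v}^\e\|_{L^\infty(D)}=O(\e)$. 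A standard cutoff $\phi^\e$ near $\partial D$, which introduces only an $O(\sqrt{\e})$ boundary-layer error in $H^1$ because $|\{\operatorname{dist}(\cdot,\partial D)\le\e\}|=O(\e)$, puts the resulting function into $H^1_0(D)^d$; \textbf{Lemma \ref{lemmaconstruction}} then produces $\tilde{\bm{v}}^\e\in V^\e$ with $\|\tilde{\bm{v}}^\e-\bm{v}^\e\|_{H^1(D)^d}\to 0$.

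\emph{Step 2 (energy convergence).} I will compute $\lim F^\e(\tilde{\bm{v}}^\e)$ by expanding $|\bm{e}(\bm{v}^\e)|^2=|\bm{e}(\bm{u}^*)-\bm{G}^\e|^2+o(1)$ in $L^1$, where $\bm{G}^\e=\sum_{i,j}e_{ij}(\bm{u}^*)\bm{e}(\bm{\chi}_{ij})(\cdot/\e)$. The single-oscillation cross term $\int_D\mu\,\bm{e}(\bm{u}^*):\bm{G}^\e\,d\bm{x}$ vanishes in the limit since periodicity forces $\int_Y\bm{e}(\bm{\chi}_{ij})\,dy=0$. The doubly-oscillating quadratic term converges, by the classical periodic-averaging lemma with the Lipschitz weight $e_{ij}(\bm{u}^*)e_{kl}(\bm{u}^*)$, to $\int_D\mc{C}[\bm{E}_{ij},\bm{E}_{kl}]\,e_{ij}(\bm{u}^*)\,e_{kl}(\bm{u}^*)\,d\bm{x}$, and bilinearity of $\mc{C}$ collapses the sum to $\int_D\mc{C}[\bm{e}(\bm{u}^*),\bm{e}(\bm{u}^*)]\,d\bm{x}$. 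Hence $F^\e(\tilde{\bm{v}}^\e)\to L:=\int_D\bigl[\mu|\bm{e}(\bm{u}^*)|^2+\mc{C}[\bm{e}(\bm{u}^*),\bm{e}(\bm{u}^*)]\bigr]d\bm{x}$. On the other side, \textbf{Corollary \ref{cor2}} combined with weak-strong convergence for the body-force term and \textbf{Theorem \ref{therge}}(i) for the surface term yields $F^\e(\bm{u}^\e_\omega)\to L$ on a set of full probability.

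\emph{Step 3 (parallelogram + Korn).} Since $V^\e$ is a linear subspace of $H^1_0(D)^d$, $\bm{u}^\e_\omega+\tilde{\bm{v}}^\e\in V^\e$ and $\rightharpoonup 2\bm{u}^*$, so the $\Gamma$--liminf estimate derived in part (ii) of the proof of \textbf{Theorem \ref{thGammaconv}} (applied to the quadratic part only) gives $\liminf F^\e(\bm{u}^\e_\omega+\tilde{\bm{v}}^\e)\ge 4L$. The parallelogram identity
\begin{equation*}
F^\e(\bm{u}^\e_\omega-\tilde{\bm{v}}^\e)=2F^\e(\bm{u}^\e_\omega)+2F^\e(\tilde{\bm{v}}^\e)-F^\e(\bm{u}^\e_\omega+\tilde{\bm{v}}^\e)
\end{equation*}
then forces $\limsup F^\e(\bm{u}^\e_\omega-\tilde{\bm{v}}^\e)\le 2L+2L-4L=0$. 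Since $\bm{u}^\e_\omega-\tilde{\bm{v}}^\e\in H^1_0(D)^d$, Korn's inequality promotes $\bm{e}(\bm{u}^\e_\omega-\tilde{\bm{v}}^\e)\to 0$ in $L^2$ to strong convergence in $H^1$, and combining with $\|\tilde{\bm{v}}^\e-\bm{v}^\e\|_{H^1}\to 0$ completes the proof. The most delicate point will be Step 2's double-oscillation averaging against the non-piecewise-constant weight $e_{ij}(\bm{u}^*)e_{kl}(\bm{u}^*)$ coupled with the boundary-layer correction of Step 1; the innocuousness of the single-oscillation cross term hinges on the periodicity identity $\int_Y\bm{e}(\bm{\chi}_{ij})\,dy=0$.
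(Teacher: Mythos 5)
Your argument is correct, but it reaches the conclusion by a genuinely different route than the paper. The paper expands $\int_D\mu\,\bm{e}(\bm{u}^\e_\omega-\bm{u}^*+\bm{r}^\e):\bm{e}(\bm{u}^\e_\omega-\bm{u}^*+\bm{r}^\e)\,d\bm{x}$ into three terms and identifies each limit separately; the delicate one is the cross term $\int_D 2\mu\,\bm{e}(\bm{u}^\e_\omega):\bm{e}(\bm{u}^*-\bm{r}^\e)\,d\bm{x}$, which the paper computes by cutting off with $\phi_\delta$, projecting $\bm{u}^*-\bm{r}^\e\phi_\delta$ into $V^\e$ via Lemma \ref{lemmaconstruction}, testing the $\e$-level variational formulation (\ref{eqvarforhe}) with this rigid approximation, and invoking Theorem \ref{therge}(ii), with a double $\limsup_{\delta}\limsup_{\e}$ bookkeeping. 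You instead avoid the cross term altogether: you build the rigid approximation $\tilde{\bm{v}}^\e\in V^\e$ of the corrector (same cutoff-plus-Lemma \ref{lemmaconstruction} device, only at scale $\e$ rather than fixed $\delta$), prove two energy convergences $F^\e(\bm{u}^\e_\omega)\to L$ and $F^\e(\tilde{\bm{v}}^\e)\to L$, and then use the linearity of $V^\e$, the parallelogram identity, and the $\Gamma$-liminf inequality along $\bm{u}^\e_\omega+\tilde{\bm{v}}^\e\rightharpoonup 2\bm{u}^*$ to force $F^\e(\bm{u}^\e_\omega-\tilde{\bm{v}}^\e)\to 0$, finishing with Korn as the paper does. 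What your route buys: it only uses Theorem \ref{therge}(i) (through Corollary \ref{cor2} and the subtraction of the convergent force and surface terms), never part (ii), and it replaces the cross-term identification by soft convexity/$\Gamma$-convergence structure; what it costs is that the quadratic-part liminf must be extracted from the proof of Theorem \ref{thGammaconv}(ii) (legitimate, since along any weakly convergent sequence the force term converges by weak--strong and the surface term by Theorem \ref{therge}(i), so the liminf inequality transfers to $F^\e$), and your boundary-layer estimate should be justified by periodicity-based $L^2$ bounds of $\bm{\chi}_{ij}(\cdot/\e)$ and $\bm{\nabla}\bm{\chi}_{ij}(\cdot/\e)$ over the $\e$-layer (a covering by $O(\e^{1-d})$ cells), since $\bm{\chi}_{ij}$ need not be in $L^\infty$ of the fluid part in high dimension; the measure count alone is not quite enough. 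With these two points made explicit, the proof is complete and slightly leaner than the paper's.
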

\begin{proof}
We notice that we need the assumption that $\bm{u}^*\in V\cap W^{2,\infty}(D)^d$ in order for $\bm{r}^\e=\displaystyle\sum_{i,j=1}^d\e\bm{\chi}_{ij}\left(\frac{\cdot}{\e}\right) e_{ij}(\bm{u}^*)\in H^1(D)^d$. First, we see that
$$||\bm{u}^\e_\omega-(\bm{u}^*-\bm{r}^\e)||_{L^2(D)^d}\leq||\bm{u}^\e_\omega-\bm{u}^*||_{L^2(D)^d}+\e\sum_{i,j=1}^d ||\bm{\chi}_{ij}\left(\frac{\cdot}{\e}\right)e_{ij}(\bm{u}^*)||_{L^2(D)^d}$$
and because $\bm{u}^*\in W^{2,\infty}(D)^d$ we have
$$||\bm{u}^\e_\omega-(\bm{u}^*-\bm{r}^\e)||_{L^2(D)^d}\leq||\bm{u}^\e_\omega-\bm{u}^*||_{L^2(D)^d}+\e||\bm{u}^*||_{W^{2,\infty}(D)^{d}}\sum_{i,j=1}^d ||\bm{\chi}_{ij}\left(\frac{\cdot}{\e}\right)||_{L^2(D)^d}$$
which implies that $\bm{u}^\e_\omega-(\bm{u}^*-\bm{r}^\e)\to\bm{0}$ in $L^2(D)^d$ as $\e\to 0$. If we are able to show that $\bm{e}(\bm{u}^\e_\omega-(\bm{u}^*-\bm{r}^\e))\to\bm{0}$ in $L^2(D)^{d\times d}$ as $\e\to 0$, then applying Korn's inequality we obtain the desired result.

To show this we use results of {\bf Theorem \ref{thGammaconv}}. An elementary calculation gives
$$\bm{e}(\bm{u}^\e_\omega-\bm{u}^*+\bm{r}^\e):\bm{e}(\bm{u}^\e_\omega-\bm{u}^*+\bm{r}^\e) =\bm{e}(\bm{u}^\e_\omega):\bm{e}(\bm{u}^\e_\omega)+\bm{e}(\bm{u}^*-\bm{r}^\e):\bm{e}(\bm{u}^*-\bm{r}^\e)-2\bm{e}(\bm{u}^\e_\omega):\bm{e}(\bm{u}^*-\bm{r}^\e).$$
The variational formulation (\ref{eqvarforhe}) yields 
$$\displaystyle\int_D 2\mu\bm{e}(\bm{u}^\e_\omega):\bm{e}(\bm{u}^\e_\omega)d\bm{x}+\sum_{k\in N^\e}\int_{\partial T^\e_k}\e\bm{\nabla}_{\bm{u}} g\left(\dfrac{\cdot}{\e},\bm{u}^\e_\omega,\tau_k\omega\right) \cdot\bm{u}^\e_\omega ds=\int_D\bm{f}^\e\cdot\bm{u}^\e_\omega d\bm{x},$$
then, using the second part of {\bf Theorem \ref{therge}}
\begin{equation}\nonumber
\begin{split}
\lim_{\e\to 0}\displaystyle\int_D 2\mu\bm{e}(\bm{u}^\e_\omega):\bm{e}(\bm{u}^\e_\omega)d\bm{x} &=\int_D\bm{f}\cdot\bm{u}^* d\bm{x}-\int_D\int_{\partial T}\int_{\Omega}\bm{\nabla}_{\bm{u}} g\left(s,\bm{u}^*,\omega\right) \cdot\bm{u}^* dPdsd\bm{x}\\
&=\int_D\bm{f}\cdot\bm{u}^* d\bm{x}+\int_D\bm{f}^*\left(\bm{u}^*\right) \cdot\bm{u}^* d\bm{x}
\end{split}
\end{equation}
Also, because $\bm{r}^\e\rightharpoonup\bm{0}$ in $H^1(D)^d$ as $\e\to 0$,
$$\displaystyle\lim_{\e\to 0}\int_D 2\mu\bm{e}(\bm{u}^*-\bm{r}^\e):\bm{e}(\bm{u}^*-\bm{r}^\e)d\bm{x}=\int_D 2\mu\bm{e}(\bm{u}^*):\bm{e}(\bm{u}^*)d\bm{x}+\lim_{\e\to 0}\int_D 2\mu\bm{e}(\bm{r}^\e):\bm{e}(\bm{r}^\e)d\bm{x}$$
But
\begin{equation}\nonumber
\begin{split}
\lim_{\e\to 0}&\int_D 2\mu\bm{e}\left(\e\bm{\chi}_{ij}\left(\frac{\cdot}{\e}\right) e_{ij}(\bm{u}^*)\right):\bm{e}\left(\e\bm{\chi}_{kl}\left(\frac{\cdot}{\e}\right) e_{kl}(\bm{u}^*)\right) d\bm{x}=\\
\lim_{\e\to 0}&\int_D 2\mu\bm{e}(\bm{\chi}_{ij})\left(\frac{\cdot}{\e}\right) :\bm{e}(\bm{\chi}_{kl})\left(\frac{\cdot}{\e}\right) e_{ij}(\bm{u}^*)e_{kl}(\bm{u}^*) d\bm{x}=\int_D 2\mathcal{C} [\bm{E}_{ij},\bm{E}_{kl}] e_{ij}(\bm{u}^*)e_{kl}(\bm{u}^*) d\bm{x},
\end{split}
\end{equation}
hence
$$\lim_{\e\to 0}\int_D 2\mu\bm{e}(\bm{r}^\e):\bm{e}(\bm{r}^\e) d\bm{x}= \sum_{i,j=1}^d\int_D 2\mathcal{C} [\bm{E}_{ij},\bm{E}_{kl}] e_{ij}(\bm{u}^*)e_{kl}(\bm{u}^*) d\bm{x}=\int_D 2\mathcal{C}[\bm{e}(\bm{u}^*),\bm{e}(\bm{u}^*)] d\bm{x}$$
which implies that
\begin{equation}\nonumber
\begin{split}
\displaystyle\lim_{\e\to 0}\int_D 2\mu\bm{e}(\bm{u}^*-\bm{r}^\e):\bm{e}(\bm{u}^*-\bm{r}^\e)d\bm{x}&=\int_D 2\mu\bm{e}(\bm{u}^*):\bm{e}(\bm{u}^*)d\bm{x}+\int_D 2\mathcal{C}[\bm{e}(\bm{u}^*),\bm{e}(\bm{u}^*)] d\bm{x}\\
&=\int_D\bm{f}\cdot\bm{u}^* d\bm{x}+\int_D\bm{f}^*(\bm{u}^*)\cdot\bm{u}^* d\bm{x},
\end{split}
\end{equation}
after we use the variational formulation (\ref{eqvarforh2}) for $\bm{u}^*$.
Let us show now
$$\displaystyle\lim_{\e\to 0}\int_D 2\mu\bm{e}(\bm{u}^\e_\omega):\bm{e}(\bm{u}^*-\bm{r}^\e)d\bm{x}=\displaystyle\int_D\bm{f}\cdot\bm{u}^* d\bm{x}+\int_D\bm{f}^*(\bm{u}^*)\cdot\bm{u}^* d\bm{x}.$$
This is equivalent to showing that
$$\limsup_{\delta\to 0}\limsup_{\e\to 0}\left|\int_D 2\mu\bm{e}(\bm{u}^\e_\omega):\bm{e}(\bm{u}^*-\bm{r}^\e)d\bm{x}-\int_D\bm{f}\cdot\bm{u}^* d\bm{x}-\int_D\bm{f}^*(\bm{u}^*)\cdot\bm{u}^* d\bm{x}\right|=0$$
because the sequence $\left(\bm{u}^\e_\omega\right)$ does not depend on $\delta$. For a fixed $\delta>0$ we denote by $D_\delta$ the set of points from $D$ with the distance to the boundary of $D$ at least $\delta$ and select a smooth cutoff function $\phi_\delta$ supported in $D$, such that $0\leq\phi_\delta\leq 1$ in $D$, equal to 1 in $D_\delta$ and such that $||\bm{\nabla}\phi_\delta||_{L^\infty(D)^d}\leq \dfrac{C}{\delta}$ and see that
$$\int_D 2\mu\bm{e}(\bm{u}^\e_\omega):\bm{e}(\bm{u}^*-\bm{r}^\e)d\bm{x}=\int_D 2\mu\bm{e}(\bm{u}^\e_\omega):\bm{e}(\bm{u}^*-\bm{r}^\e)\phi_\delta d\bm{x}+\int_D 2\mu\bm{e}(\bm{u}^\e_\omega):\bm{e}(\bm{u}^*-\bm{r}^\e) (1-\phi_\delta)d\bm{x}.$$
We estimate the second term, and a.s. $\omega\in\Omega$
$$\left|\int_D 2\mu\bm{e}(\bm{u}^\e_\omega):\bm{e}(\bm{u}^*-\bm{r}^\e) (1-\phi_\delta)d\bm{x}\right|\leq C||\bm{u}^\e_\omega||_{H^1(D)^d}||\bm{u}^*-\bm{r}^\e||_{H^1(D\setminus D_\delta)^d}$$
$$\leq C \left(||\bm{u}^*||_{H^1(D\setminus D_\delta)^d}+\sum_{i,j=1}^{d}||\e\bm{\chi}_{ij}\left(\frac{\cdot}{\e}\right)||_{H^1(D\setminus D_\delta)^d}\right)$$
which implies that
$$\limsup_{\e\to 0}\left|\int_D 2\mu\bm{e}(\bm{u}^\e_\omega):\bm{e}(\bm{u}^*-\bm{r}^\e) (1-\phi_\delta)d\bm{x}\right|\leq C\left(||\bm{u}^*||_{H^1(D\setminus D_\delta)^d}+\sum_{i,j=1}^d \int_{D\setminus D_\delta} \mathcal{C}[\bm{E}_{ij},\bm{E}_{ij}] d\bm{x}\right),$$
Therefore
$$\limsup_{\delta\to 0}\limsup_{\e\to 0} \left|\int_D 2\mu\bm{e}(\bm{u}^\e_\omega):\bm{e}(\bm{u}^*-\bm{r}^\e) (1-\phi_\delta)d\bm{x}\right|=0.$$
It is sufficient to show that
$$\limsup_{\delta\to 0}\limsup_{\e\to 0}\left|\int_D 2\mu\bm{e}(\bm{u}^\e_\omega):\bm{e}(\bm{u}^*-\bm{r}^\e)\phi_\delta d\bm{x}-\int_D\bm{f}\cdot\bm{u}^* d\bm{x}-\int_D\bm{f}^*(\bm{u}^*)\cdot\bm{u}^* d\bm{x}\right|=0.$$
We perform the following calculation:
\begin{equation}\nonumber
\begin{split}
&\int_D 2\mu\bm{e}(\bm{u}^\e_\omega):\bm{e}(\bm{u}^*-\bm{r}^\e)\phi_\delta d\bm{x}=\int_D 2\mu\bm{e}(\bm{u}^\e_\omega):\bm{e}(\bm{u}^*-\bm{r}^\e\phi_\delta) d\bm{x}+\\
&\int_D 2\mu\bm{e}(\bm{u}^\e_\omega):\bm{e}(\bm{u}^*)(\phi_\delta-1) d\bm{x}+\int_D 2\mu \sum_{i,j=1}^d e_{ij}(\bm{u}^\e_\omega)\left(\frac{\partial \phi_\delta}{\partial x_i}\bm{r}^\e_j+\frac{\partial \phi_\delta}{\partial x_j}\bm{r}^\e_i\right) d\bm{x},
\end{split}
\end{equation}
and estimate the second term
$$\limsup_{\e\to 0}\left|\int_D 2\mu\bm{e}(\bm{u}^\e_\omega):\bm{e}(\bm{u}^*)(\phi_\delta-1)\right|\leq C||\bm{u}^*||_{H^1(D\setminus D_\delta)^d}.$$
Because $\bm{r}^\e\to 0$ in $L^2(D)^d$ as $\e\to 0$
$$\limsup_{\e\to 0}\left|\int_D 2\mu \sum_{i,j=1}^d
e_{ij}(\bm{u}^\e_\omega)\left(\frac{\partial \phi_\delta}{\partial x_i}\bm{r}^\e_j+\frac{\partial \phi_\delta}{\partial x_j}\bm{r}^\e_i\right)\right|=0.$$
It remains to show that
$$\limsup_{\delta\to 0}\limsup_{\e\to 0}\left|\int_D 2\mu\bm{e}(\bm{u}^\e_\omega):\bm{e}(\bm{u}^*-\bm{r}^\e\phi_\delta) d\bm{x}-\int_D\bm{f}\cdot\bm{u}^* d\bm{x}-\int_D\bm{f}^*(\bm{u}^*)\cdot\bm{u}^* d\bm{x}\right|=0.$$
For the sequence $\bm{u}^*-\bm{r}^\e\phi_\delta$ we calculate $||\operatorname{div}(\bm{u}^*-\bm{r}^\e\phi_\delta)||_{L^2(D)}$ and $||\bm{e}(\bm{u}^*-\bm{r}^\e\phi_\delta)||_{L^2(D^\e_r)^{d\times d}}$.
\begin{equation}\nonumber
\begin{split}
\operatorname{div}(\bm{u}^*-\bm{r}^\e\phi_\delta)&=-\phi_\delta \operatorname{div}\bm{r}^\e-\bm{r}^\e\cdot\bm{\nabla} \phi_\delta\\
&=-\phi_\delta \e\sum_{i,j=1}^d\bm{\chi}_{ij}\left(\frac{\cdot}{\e}\right)\cdot\bm{\nabla} e_{ij}(\bm{u}^*)-\phi_\delta \sum_{i,j=1}^d \operatorname{div}\bm{\chi}_{ij}\left(\frac{\cdot}{\e}\right) e_{ij}(\bm{u}^*)-\bm{r}^\e\cdot\bm{\nabla} \phi_\delta.
\end{split}
\end{equation}
According to the property (\ref{p1}) in any set of the form $\e k+\e T$, for $k\in \Int^d$
$$\displaystyle\sum_{i,j=1}^d \operatorname{div}\bm{\chi}_{ij}\left(\frac{\cdot}{\e}\right) e_{ij}(\bm{u}^*)=\displaystyle\sum_{i,j=1}^d \delta_{ij} e_{ij}(\bm{u}^*)=\operatorname{div}\bm{u}^*=0,$$
and because $\operatorname{div}\bm{\chi}_{ij}=0$ outside the sets $\e k+\e T$, we obtain that $\displaystyle\sum_{i,j=1}^d \operatorname{div}\bm{\chi}_{ij}\left(\frac{\cdot}{\e}\right) e_{ij}(\bm{u}^*)=0$ in $D$. Thus 
$$\lim_{\e\to 0} ||\operatorname{div}(\bm{u}^*-\bm{r}^\e\phi_\delta)||_{L^2(D)}=0.$$
Also in $D_r^\e$
\begin{equation}\nonumber
\begin{split}
\bm{e}(\bm{u}^*-\bm{r}^\e\phi_\delta)&:\bm{e}(\bm{u}^*-\bm{r}^\e\phi_\delta)=\bm{e}(\bm{u}^*-\bm{r}^\e):\bm{e}(\bm{u}^*-\bm{r}^\e)+\bm{e}(\bm{r}^\e(1-\phi_\delta)):\bm{e}(\bm{r}^\e(1-\phi_\delta))\\
&\leq	\bm{e}(\bm{u}^*-\bm{r}^\e):\bm{e}(\bm{u}^*-\bm{r}^\e)+ C\bm{e}(\bm{r}^\e):\bm{e}(\bm{r}^\e)(1-\phi_\delta)^2+C|\bm{r}^\e|^2|\bm{\nabla}\phi_\delta|^2\\
&\leq	C\e^2+C\bm{e}(\bm{r}^\e):\bm{e}(\bm{r}^\e)(1-\phi_\delta)^2+C|\bm{r}^\e|^2|\bm{\nabla}\phi_\delta|^2.
\end{split}
\end{equation}
Hence,
$$\limsup_{\delta\to 0}\limsup_{\e\to 0}\int_{D_r^\e}\bm{e}(\bm{u}^*-\bm{r}^\e\phi_\delta):\bm{e}(\bm{u}^*-\bm{r}^\e\phi_\delta) d\bm{x}\leq \limsup_{\delta\to 0}\int_{D\setminus D_\delta}\bm{e}(\bm{r}^\e):\bm{e}(\bm{r}^\e) d\bm{x}=0.$$
We apply {\bf Lemma \ref{lemmaconstruction}} to the sequence $(\bm{u}^*-\bm{r}^\e\phi_\delta)$ and find a sequence $\bm{u}^\e_\delta \in V^\e$ such that
$$\limsup_{\delta\to 0}\limsup_{\e\to 0}||\bm{u}^*-\bm{r}^\e\phi_\delta-\bm{u}^\e_\delta||_{H^1(D)^{d}}=0.$$
This means that
$$\limsup_{\delta\to 0}\limsup_{\e\to 0}\left|\int_D 2\mu\bm{e}(\bm{u}^\e_\omega):\bm{e}(\bm{u}^*-\bm{r}^\e\phi_\delta) d\bm{x}-\int_D\bm{f}\cdot\bm{u}^* d\bm{x}-\int_D\bm{f}^*(\bm{u}^*)\cdot\bm{u}^* d\bm{x}\right|=$$
$$\limsup_{\delta\to 0}\limsup_{\e\to 0}\left|\int_D 2\mu\bm{e}(\bm{u}^\e_\omega):\bm{e}(\bm{u}^\e_\delta) d\bm{x}-\int_D\bm{f}\cdot\bm{u}^* d\bm{x}-\int_D\bm{f}^*(\bm{u}^*)\cdot\bm{u}^* d\bm{x}\right|\leq$$
$$\limsup_{\delta\to 0}\limsup_{\e\to 0}\left|\int_D\bm{f}^\e\cdot\bm{u}^\e_\delta d\bm{x}-\int_D\bm{f}\cdot\bm{u}^* d\bm{x}\right|+$$
$$\limsup_{\delta\to 0}\limsup_{\e\to 0}\left|-\sum_{k\in N^\e}\int_{\partial T^\e_k}\bm{\nabla}_{\bm{u}} g\left(\frac{\cdot}{\e},\bm{u}^\e_\delta,\tau_k\omega\right)\cdot\bm{u}^\e_\delta ds-\int_D\bm{f}^*(\bm{u}^*)\cdot\bm{u}^* d\bm{x}\right|=0,$$
after application of the second part of {\bf Theorem \ref{therge}}.
\end{proof}

\section{Conclusions}
\label{sec5}
The current paper provides a homogenization-type result stated in {\bf Theorem \ref{thGammaconv}} and its corollary for a suspension of rigid particles in the presence of highly oscillatory random interfacial forces. In particular, it states that the solution $\bm{u}^\e_\omega$ to the family of problems (\ref{systemhe}) corresponding to a size of the microstructure $\e$ weakly converges in $H^1(D)^d$ as $\e \to 0$ to the solution $\bm{u}^*$ of the homogenized problem (\ref{systemh}). This $\bm{u}^*$ is a unique minimizer of a $\Gamma$-- limit for the sequence of functionals $(E^\e_{\omega})_{\e>0}$ corresponding to the original family of the problems. It is also shown in {\bf Theorem \ref{thcorr}} that a first order corrector of $\bm{u}^*$ constructed based on the solutions of the cell problems yields a strong $H^1(D)^d$-- convergence.

To indicate possible future directions of generalization of the considered case we mention the
study of different types of surface forces with coupled electrostatic or magnetic
phenomena added. Another important and challenging task is to develop computational techniques to analyze the
relative importance of electrostatic interactions and surface energy in materials with ionized
inclusions. Ultimately, such a fundamental understanding of particulate flows
may be used for the control and optimization of the response in the case of the design of novel materials.

\section{Appendix}

Here we prove {\bf Theorems \ref{thvarfor}, \ref{thminfor}} and {\bf \ref{thexun}}.

\subsection{Proof of Theorem \ref{thvarfor}}

\begin{proof}
We notice first that according to {\bf Remark \ref{remark3}}, the integral $\displaystyle\int_{D_r^l}\bm{f}_l(\bm{x},\bm{u}) \cdot\bm{\phi}ds$ makes sense if $\bm{u}$ belongs to $H_0^1(D)^d$. Let us assume that $\{\bm{u},p\}$ is a weak
solution for the system (\ref{system}), so $-\operatorname{div}\bm{\sigma} =\bm{f}$ weakly in $D_f$. This means that $$\int_{D_f} 2\mu\bm{e}(\bm{u}):\bm{e}(\bm{\phi}) d\bm{x}-
\int_{D_f} p\operatorname{div}\bm{\phi}= \int_{D_f}\bm{f}\cdot\bm{\phi}d\bm{x},$$
for every $\bm{\phi}$ from $H_0^1(D_f)^d$. Now, if we take a $\bm{\phi}$ from $H_0^1(D)^d$, such that $\bm{e}(\bm{\phi})=\bm{0}$ in $D_r$, we obtain
$$\int_{D_f} 2\mu\bm{e}(\bm{u}):\bm{e}(\bm{\phi}) d\bm{x}-\int_{D_f} p\operatorname{div}\bm{\phi}= \int_{D_f}\bm{f}\cdot\bm{\phi}d\bm{x}+
\int_{\partial D_f}\bm{\sigma}\bm{n}\cdot\bm{\phi} ds.$$
But
$$\int_{\partial D_f}\bm{\sigma}\bm{n}\cdot\bm{\phi} ds=\int_{\partial D_r}\bm{\sigma}\bm{n}\cdot\bm{\phi} ds=
\sum_{l=1}^N\int_{\partial D_r^l}\bm{\sigma}\bm{n}\cdot\bm{\phi} ds,$$
by our convention that $\bm{n}$ represents the unit normal pointing outside the fluid region. In each $D_r^l$ the vector field $\bm{\phi}$, having the property that $\bm{e}(\bm{\phi})=\bm{0}$, takes the form
$$\bm{\phi}=\bm{M_l} (\bm{x}-\bm{x}_l)+\bm{m}_l,$$
where $\bm{M_l}\in \Re^{d\times d}$ is a skew symmetric matrix and $\bm{m}_l\in \Re^d$ is a fixed vector. Thus
$$\int_{\partial D_r^l}\bm{\sigma}\bm{n}\cdot\bm{\phi} ds=\int_{\partial D_r^l}\bm{\sigma}\bm{n}\cdot (\bm{M_l} (\bm{x}-\bm{x}_l)+\bm{m}_l) ds=$$
$$\int_{\partial D_r^l} \sum_{i,j=1}^d(\bm{\sigma}\bm{n})_i (\bm{M_l})_{ij} (\bm{x}-\bm{x}_l)_jds+\int_{\partial D_r^l}\bm{\sigma}\bm{n}\cdot\bm{m}_l ds=$$
$$\int_{\partial D_r^l} \sum_{i< j}(\bm{M_l})_{ij} \left((\bm{\sigma}\bm{n})_i (\bm{x}-\bm{x}_l)_j-(\bm{\sigma}\bm{n})_j (\bm{x}-\bm{x}_l)_i\right)ds+\int_{\partial D_r^l}\bm{\sigma}\bm{n}\cdot\bm{m}_l ds=$$
$$\sum_{i< j}(\bm{M_l})_{ij}\int_{\partial D_r^l} (\bm{\sigma}\bm{n})_i (\bm{x}-\bm{x}_l)_j-(\bm{\sigma}\bm{n})_j (\bm{x}-\bm{x}_l)_ids+\bm{m}_l\cdot\int_{\partial D_r^l}\bm{\sigma}\bm{n} ds.$$
Using now the balance of forces (\ref{balance.f}) we have that 
$$\int_{\partial D_r^l}\bm{\sigma}\bm{n} ds=\int_{D_r^l}\bm{f} d\bm{x} + \int_{\partial D_r^l}\bm{f}_l(\bm{x},\bm{u})ds,$$
and using the balance of torques (\ref{balance.t}) we have that
\begin{equation}
\nonumber
\begin{split}
&\int_{\partial D_r^l}\left((\bm{\sigma}\bm{n})_i (\bm{x}-\bm{x}_l)_j-(\bm{\sigma}\bm{n})_j (\bm{x}-\bm{x}_l)_i\right)ds=\\
&\int_{D_r^l}\left(\bm{f}_i(\bm{x}-\bm{x}_l)_j-\bm{f}_j (\bm{x}-\bm{x}_l)_i\right)d\bm{x}+\int_{\partial D_r^l}\left((\bm{f}_l)_i(\bm{x},\bm{u}) (\bm{x}-\bm{x}_l)_j- (\bm{f}_l)_j(\bm{x},\bm{u}) (\bm{x}-\bm{x}_l)_i\right)ds.
\end{split}
\end{equation}
Combining the last three equations we obtain
\begin{equation}
\nonumber
\begin{split}
\int_{\partial D_r^l}\bm{\sigma}\bm{n}\cdot\bm{\phi} ds&=\int_{D_r^l} \sum_{i,j=1}^d\bm{f}_i (\bm{M_l})_{ij} (\bm{x}-\bm{x}_l)_jd\bm{x}+\int_{D_r^l}\bm{f}\cdot\bm{m}_l d\bm{x}+\\
&+\int_{\partial D_r^l} \sum_{i,j=1}^d(\bm{f}_l)_i (\bm{M_l})_{ij} (\bm{x}-\bm{x}_l)_jds+\int_{\partial D_r^l}\bm{f}_l\cdot\bm{m}_l ds\\
&=\int_{D_r^l}\bm{f}\cdot\bm{\phi} d\bm{x}+\int_{\partial D_r^l}\bm{f}_l\cdot\bm{\phi} ds.
\end{split}
\end{equation}
Adding over $0\leq l\leq N$ we get
$$\int_{\partial D_r}\bm{\sigma}\bm{n}\cdot\bm{\phi} ds=\int_{D_r}\bm{f}\cdot\bm{\phi} d\bm{x}+\sum_{l=1}^N\int_{\partial D_r^l}\bm{f}_l\cdot\bm{\phi} ds$$
so the variational formulation
$$\int_{D_f} 2\mu\bm{e}(\bm{u}):\bm{e}(\bm{\phi}) d\bm{x}-\int_{D_f} p\operatorname{div}\bm{\phi}= \int_{D_f}\bm{f}\cdot\bm{\phi}d\bm{x}+\int_{\partial D_f}\bm{\sigma}\bm{n}\cdot\bm{\phi} ds$$
becomes, because $\bm{e}(\bm{\phi})=\bm{0}$ in $D_r$
$$\int_{D} 2\mu\bm{e}(\bm{u}):\bm{e}(\bm{\phi}) d\bm{x}-\int_{D_f} p\operatorname{div}\bm{\phi}= \int_{D}\bm{f}\cdot\bm{\phi}d\bm{x}+\sum_{l=1}^N\int_{\partial D_r^l}\bm{f}_l\cdot\bm{\phi} ds,$$
which is what we wanted to prove.

Conversely, assuming (\ref{eqvarfor}) we obtain for all $\bm{\phi}$ from $H_0^1(D_f)^d$
$$\int_{D_f} 2\mu\bm{e}(\bm{u}):\bm{e}(\bm{\phi}) d\bm{x}-\int_{D_f} p\operatorname{div}\bm{\phi}= \int_{D_f}\bm{f}\cdot\bm{\phi}d\bm{x},$$
which gives $-\operatorname{div}\bm{\sigma}=\bm{f}$ in $D_f$. Using this equation, and taking $\bm{\phi}$ from $H_0^1(D)^d$ with $\bm{e}(\bm{\phi})=\bm{0}$ in $D_r$ we obtain that
$$\sum_{l=1}^{N}\int_{\partial D_r^l}\bm{f}_l(\bm{x},\bm{u}) \cdot\bm{\phi}ds=\sum_{l=1}^{N}\int_{\partial D_r^l}\bm{\sigma}\bm{n}\cdot\bm{\phi}ds,$$
which gives us in the same way as in first part the balances of forces and torques.
\end{proof}

\subsection{Proof of Theorem \ref{thminfor}}

\begin{proof}
We notice first that according to {\bf Remark \ref{remark3}} the integral $\displaystyle\int_{\partial D_r^l}g_l(\bm{x},\bm{v})ds$ makes sense for $\bm{v}$ in $H_0^1(D)^d$. From {\bf Theorem \ref{thvarfor}}, if $\{\bm{u},p\}$ is a weak solution for the system (\ref{system}), then
\begin{equation}
\label{eqvarfor2}
\int_D 2\mu\bm{e}(\bm{u}):\bm{e}(\bm{\phi})d\bm{x}=\int_D\bm{f}\cdot\bm{\phi}d\bm{x}-\sum_{l=1}^{N}\int_{\partial D_r^l}\bm{\nabla}_{\bm{u}}g_l(\bm{x},\bm{u}) \cdot\bm{\phi}ds,
\end{equation}
for all $\bm{\phi}\in V_r$. So
$$E_r(\bm{u}+\bm{\phi})-E_r(\bm{u})=\int_D \mu\bm{e}(\bm{\phi}):\bm{e}(\bm{\phi})d\bm{x}+\sum_{l=1}^{N}\int_{\partial D_r^l}\left(g_l(\bm{x},\bm{u}+\bm{\phi})-g_l(\bm{x},\bm{u})+\bm{\nabla}_{\bm{u}}g_l(\bm{x},\bm{u}) \cdot\bm{\phi}\right)ds,$$
which is positive, based on the convexity property of $g_l$ (see {\bf Remark \ref{remark3}}). The quadratic term $\displaystyle\int_D \mu\bm{e}(\bm{v}):\bm{e}(\bm{v})d\bm{x}$ makes th functional $E_r$ strictly convex, therefore the minimizer is unique.
\end{proof}

\subsection{Proof of Theorem \ref{thexun}}

\begin{proof}
Let $\bm{u}$ be the unique minimizer of $E_r$, and let us show that $\bm{u}$ satisfies (\ref{eqvarfor2}). For every $\lambda\in\Re$ and every $\bm{\phi}\in V_r$ we have that
$$E_r(\bm{u}+\lambda\bm{\phi})-E_r(\bm{u})\geq 0,$$
which gives after calculations that
$$\lim
_{\lambda\searrow 0}\int_D 2\mu\bm{e}(\bm{u}):\bm{e}(\bm{\phi})d\bm{x}-\int_D\bm{f}\bm{\phi}d\bm{x}+\sum_{l=1}^{N}\int_{\partial D_r^l}\frac{g_l(\bm{x},\bm{u}+\lambda\bm{\phi})-g_l(\bm{x},\bm{u})}{\lambda}ds\geq 0,$$
and
$$\lim
_{\lambda\nearrow 0}\int_D 2\mu\bm{e}(\bm{u}):\bm{e}(\bm{\phi})d\bm{x}-\int_D\bm{f}\bm{\phi}d\bm{x}+\sum_{l=1}^{N}\int_{\partial D_r^l}\frac{g_l(\bm{x},\bm{u}+\lambda\bm{\phi})-g_l(\bm{x},\bm{u})}{\lambda}ds\leq 0.$$
Combining these two we obtain (\ref{eqvarfor2}), which is
$$\int_D 2\mu\bm{e}(\bm{u}):\bm{e}(\bm{\phi})d\bm{x}=\int_D\bm{f}\cdot\bm{\phi}d\bm{x}-\sum_{l=1}^{N}\int_{\partial D_r^l}\bm{\nabla}_{\bm{u}}g_l(\bm{x},\bm{u}) \cdot\bm{\phi}ds,$$
for all $\bm{\phi}\in V_r$. If $\bm{\phi}$ is a divergence free vector field from $H_0^1(D_f)^d$ we obtain 
$$\int_D 2\mu\bm{e}(\bm{u}):\bm{e}(\bm{\phi})d\bm{x}=\int_D\bm{f}\cdot\bm{\phi}d\bm{x},$$
and from here the existence of $p$, unique up to an additive constant, such that $-\operatorname{div}\bm{\sigma}=\bm{f}$ weakly in $D_f$ (see \cite{temam}). As in {\bf Theorem \ref{thvarfor}} we obtain
$$\sum_{l=1}^{N}\int_{\partial D_r^l}-\bm{\nabla}_{\bm{u}}g_l(\bm{x},\bm{u}) \cdot\bm{\phi}ds=\sum_{l=1}^{N}\int_{\partial D_r^l}\bm{\sigma}\bm{n}\cdot\bm{\phi}ds,$$
which gives us the balances of forces and torques.
\end{proof}

\vspace{5pt}


\end{document}